\numberwithin{equation}{section}
\newcommand{\ourtitle}{Singular Values of the Attenuated Photoacoustic Imaging Operator}
\title{\ourtitle}
\author{Peter Elbau$^1$\\{\footnotesize\href{mailto:peter.elbau@univie.ac.at}{peter.elbau@univie.ac.at}}
\and Otmar Scherzer$^{1,2}$\\{\footnotesize\href{mailto:otmar.scherzer@univie.ac.at}{otmar.scherzer@univie.ac.at}}
\and Cong Shi$^1$\\{\footnotesize\href{mailto:cong.shi@univie.ac.at}{cong.shi@univie.ac.at}}}
\date{}
\let\oldtheequation\theequation
\renewcommand\tagform@[1]{\maketag@@@{\ignorespaces#1\unskip\@@italiccorr}}
\renewcommand\theequation{(\oldtheequation)}
\def\equationautorefname~{}
\newtheorem{lemma}{Lemma}[section]
\newaliascnt{proposition}{lemma}
\newtheorem{proposition}[proposition]{Proposition}
\newaliascnt{corollary}{lemma}
\newtheorem{corollary}[corollary]{Corollary}
\newaliascnt{theorem}{lemma}
\newtheorem{theorem}[theorem]{Theorem}
\newaliascnt{definition}{lemma}
\newtheorem{definition}[definition]{Definition}
\newaliascnt{assumption}{lemma}
\theoremstyle{nonumberplain}
\newtheorem{remark}{Remark}
\newtheorem{proof}{Proof}
\titleformat{\section}[block]{\large\sc\filcenter}{\thesection.}{0.5ex}{}[]
\titleformat{\subsection}[runin]{\bf}{\thesubsection.}{0.5ex}{}[.]
\newcommand{\N}{\mathds{N}}
\newcommand{\R}{\mathds{R}}
\newcommand{\C}{\mathds{C}}
\renewcommand{\H}{\mathds{H}}
\let\RE\Re
\let\Re=\undefined
\DeclareMathOperator{\Re}{\RE e}
\let\IM\Im
\let\Im=\undefined
\DeclareMathOperator{\Im}{\IM m}
\DeclareMathOperator{\supp}{supp}
\DeclareMathOperator{\dist}{dist}
\DeclareMathOperator{\diam}{diam}
\DeclareMathOperator{\Rank}{rank}
\newcommand{\e}{\mathrm e}
\let\ii\i
\renewcommand{\i}{\mathrm i}
\renewcommand{\d}{\,\mathrm d}
\newcommand{\Dj}{\left.\frac{\partial^j}{\partial s^j}\right|_{s=0}\!\!\!\!\!\!}
\newcommand{\Ord}{\mathcal O}
\newcommand{\PO}{\check{\mathcal P}_\kappa}
\newcommand{\POa}{\check{\mathcal P}_\kappa^{(0)}{}}
\newcommand{\POb}{\check{\mathcal P}_\kappa^{(1)}{}}
\newcommand{\POK}{F_\kappa}
\newcommand{\POKa}{F_\kappa^{(0)}}
\newcommand{\POka}{f_\kappa^{(0)}}
\newcommand{\POKb}{F_\kappa^{(1)}}
\newcommand{\POkb}{f_\kappa^{(1)}}
\newcommand{\POKc}{F_\kappa^{(2)}}
\newcommand{\POkc}{f_\kappa^{(2)}}
\begin{document}

\maketitle
\thispagestyle{empty}
\hspace*{1em}
\parbox[t]{0.49\textwidth}{\footnotesize
\hspace*{-1ex}$^1$Computational Science Center\\
University of Vienna\\
Oskar-Morgenstern-Platz 1\\
A-1090 Vienna, Austria}
\parbox[t]{0.4\textwidth}{\footnotesize
\hspace*{-1ex}$^2$Johann Radon Institute for Computational\\
\hspace*{1em}and Applied Mathematics (RICAM)\\
Altenbergerstra{\ss}e 69\\
A-4040 Linz, Austria}
\vspace*{2em}

\begin{abstract}
We analyse the ill-posedness of the photoacoustic imaging problem in the case of an attenuating medium. 
To this end, we introduce an attenuated photoacoustic operator and determine the asymptotic behaviour 
of its singular values. Dividing the known attenuation models into strong and weak attenuation classes, 
we show that for strong attenuation, the singular values of the attenuated photoacoustic operator decay 
exponentially, and in the weak attenuation case the singular values of the attenuated photoacoustic operator 
decay with the same rate as the singular values of the non-attenuated photoacoustic operator.
\end{abstract}

\section{Introduction}
In standard photoacoustic imaging, see e.g.~\cite{Wan09}, it is assumed that the medium is \emph{non-attenuating}, and
the imaging problem consists in visualising the spatially, compactly supported \emph{absorption density function}
$h:\R^3 \to \R$, appearing as a source term in the wave equation
\begin{equation} \label{eqWaveEquation}
\begin{aligned}
\partial_{tt}p(t,x)-\Delta p(t,x)&=\delta'(t)h(x),\quad &&t\in\R,\;x \in \R^3, \\
p(t,x)&=0,\quad&&t<0,\;x\in\R^3,
\end{aligned}
\end{equation}
from measurements $m(t,x)$ of the pressure $p$ for $(t,x) \in (0,\infty) \times \partial \Omega$, where $\partial \Omega$
is the boundary of a compact, convex set $\Omega$ containing the support of $h$.

In this paper, we consider photoacoustic imaging in attenuating media, where the propagation of the waves is 
described by the attenuated wave equation
\begin{equation}\label{eqWaveEquationAttenuation}
\begin{aligned}
\mathcal A_\kappa p(t,x)-\Delta p(t,x)&=\delta'(t)h(x),\quad &&t\in\R,\;x\in\R^3, \\
p(t,x)&=0,\quad&&t<0,\;x\in\R^3,
\end{aligned}
\end{equation}
where $\mathcal A_\kappa$ is the pseudo-differential operator defined in frequency domain by:
\begin{equation}\label{eqAttenuationOp}
\check{\mathcal A}_\kappa p(\omega,x) = -\kappa^2(\omega)\check p(\omega,x),\quad \omega \in \R,\;x \in \R^3,
\end{equation}
for some attenuation coefficient $\kappa:\R\to\C$ which admits a solution of \autoref{eqWaveEquationAttenuation}. 
Here $\check f$ denotes the one-dimensional inverse Fourier transform of $f$ with respect to time $t$, that is, for $f\in L^1(\R)$:
\[ \check f(\omega) = \frac1{\sqrt{2\pi}}\int_{-\infty}^\infty f(t)\e^{\i\omega t}\d t. \]
The \emph{attenuated photoacoustic imaging} problem consists in estimating $h$ from
measurements $m$ of $p$ on $\partial \Omega$ over time. 
The formal difference between \autoref{eqWaveEquationAttenuation} and \autoref{eqWaveEquation} is that the second time 
derivative operator $\partial_{tt}$ is replaced by a pseudo-differential operator $\mathcal A_\kappa$. 
We emphasise that standard photoacoustic imaging corresponds to $\kappa^2(\omega) = \omega^2$.

We review below, see \autoref{eqSolAttWave}, that in frequency domain the solution of \autoref{eqWaveEquationAttenuation} 
is  given by 
\[ \check p(\omega;x) = -\int_{\R^3} \frac{\i \omega}{4\pi\sqrt{2\pi}}\frac{\e^{\i\kappa(\omega)|x-y|}}{|x-y|}h(y)\d y. \]
We associate with this solution the \emph{time-integrated} photoacoustic operator in frequency domain:
\[ \PO h(\omega,x)= \frac{1}{4\pi\sqrt{2\pi}}\int_{\R^3} \frac{\e^{\i\kappa(\omega)|x-y|}}{|x-y|}h(y)\d y. \]

One goal of this paper is to characterise the degree of ill-posedness of the problem of inverting the time-integrated 
photoacoustic operator by estimating the decay rate of its singular values.
We mention however, that although the attenuated photoacoustic operator, giving the solution $\check p$, is related to the 
integrated photoacoustic operator by just time-differentiation, the singular values and functions 
of the photoacoustic operator have not been characterized so far. 

In this paper, we are identifying two classes of attenuation models (classes of functions $\kappa$), which 
correspond to \emph{weakly} and \emph{strongly} attenuating media. 
We prove that for weakly attenuating media the singular values $((\lambda_n(\PO^*\PO))^{\frac12})_{n=1}^\infty$ decay equivalently to  $n^{-\frac13}$, 
as in the standard photoacoustic imaging case, where this result has been proven in \cite{Pal10}. 
For the strongly attenuating models, the singular values are decaying exponentially, which is proven by using that in this case the operator $\PO^*\PO$ is an integral operator with smooth kernel.

\section{The Attenuated Wave Equation}

To model the wave propagation in an attenuated medium, we imitate the wave equation for the electric field $E:\R\times\R^3\to\R^3$ in an isotropic linear dielectric medium described by the electric susceptibility $\chi:\R\to\R$ (extended by $\chi(t)=0$ for $t<0$ to negative times):
\[ \frac1{c^2}\partial_{tt}E(t,x)+\frac1{c^2}\int_0^\infty\frac{\chi(\tau)}{\sqrt{2\pi}}\partial_{tt}E(t-\tau,x)\d\tau-\Delta E(t,x) = 0, \]
or written in terms of the inverse Fourier transforms $\check E$ and $\check\chi$ with respect to the time:
\begin{equation}\label{eqElectrodynamicModel}
-\frac{\omega^2}{c^2}\left(1+\check\chi(\omega)\right)\check E(\omega,x)-\Delta\check E(\omega,x) = 0.
\end{equation}

Analogously, we want to incorporate attenuation by replacing the second time derivatives in our equation \autoref{eqWaveEquation} by a pseudo-differential operator $\mathcal A_\kappa$ of the form \autoref{eqAttenuationOp} for some function $\kappa:\R\to\C$ (corresponding to $\frac\omega c\sqrt{1+\check\chi(\omega)}$ in the electrodynamic model).

We will interpret the equation \autoref{eqWaveEquationAttenuation} as an equation in the space of tempered distributions $\mathcal S'(\R\times\R^3)$ so that the Fourier transform and the $\delta$-distribution are both well-defined. To make sense of $\mathcal A_\kappa$ as an operator on $\mathcal S'(\R\times\R^3)$ and to be able to find a solution of \autoref{eqWaveEquationAttenuation}, we impose the following conditions on the function $\kappa$.

\begin{definition}\label{deAttCoeff}
We call a non-zero function $\kappa\in C^\infty(\R;\overline\H)$, where $\H=\{z\in\C\mid\Im z>0\}$ denotes the upper half complex plane and $\overline\H$ its closure in $\C$, an \emph{attenuation coefficient} if
\begin{enumerate}
\item\label{enAttCoeffPolBdd}
all the derivatives of $\kappa$ are polynomially bounded. That is, for every $\ell\in\N_0$ there exist constants $\kappa_1>0$ and $N\in\N$ such that
\begin{equation}\label{eqAttCoeffPolBdd}
|\kappa^{(\ell)}(\omega)| \le \kappa_1(1+|\omega|)^N,
\end{equation}
\item\label{enAttCoeffHol}
there exists a holomorphic continuation $\tilde\kappa:\overline\H\to\overline\H$ of $\kappa$ on the upper half plane, that is, $\tilde\kappa\in C(\overline\H;\overline\H)$ with $\tilde\kappa|_{\R}=\kappa$ and $\tilde\kappa:\H\to\overline\H$ is holomorphic; with
\[ |\tilde\kappa(z)|\le \tilde\kappa_1(1+|z|)^{\tilde N}\quad\text{for all}\quad z\in\overline\H \]
for some constants $\tilde\kappa_1>0$ and $\tilde N\in\N$.
\item\label{enAttCoeffSymm}
we have the symmetry $\kappa(-\omega)=-\overline{\kappa(\omega)}$ for all $\omega\in\R$.
\end{enumerate}
\end{definition}

The condition \ref{enAttCoeffPolBdd} in \autoref{deAttCoeff} ensures that the product $\kappa^2u$ of $\kappa^2$ with an arbitrary tempered distribution $u\in\mathcal S'(\R)$ is again in $\mathcal S'(\R)$ and therefore, the operator $\mathcal A_\kappa$ is well-defined.

\begin{definition}
Let $\kappa\in C^\infty(\R)$ be an attenuation coefficient. Then, we define the \emph{attenuation operator} $\mathcal A_\kappa:\mathcal S'(\R\times\R^3)\to\mathcal S'(\R\times\R^3)$ by its action on the tensor products $\phi\otimes\psi\in\mathcal S(\R\times\R^3)$, given by $(\phi\otimes\psi)(\omega,x)=\phi(\omega)\psi(x)$:
\begin{equation}\label{eqAttOp}
\left<\mathcal A_\kappa u,\phi\otimes\psi\right>_{\mathcal S',\mathcal S} = -\left<u,(\mathcal F^{-1}\kappa^2\mathcal F\phi)\otimes\psi\right>_{\mathcal S',\mathcal S},
\end{equation}
where $\mathcal F:\mathcal S(\R)\to\mathcal S(\R)$, $\mathcal F\phi(\omega)=\frac1{\sqrt{2\pi}}\int_{-\infty}^\infty\phi(t)\e^{-\i\omega t}\d t$ denotes the Fourier transform. This uniquely defines the operator $\mathcal A_\kappa$, see for example \cite[Lemma 6.2]{Tar07}.
\end{definition}
\begin{remark}
We use $\mathcal F$ when we are talking of the Fourier transform as an operator and use in the calculations $\hat\phi=\mathcal F\phi$ and $\check\phi=\mathcal F^{-1}\phi$.
We will use the notation $\mathcal F$ also for the Fourier transform on different spaces (in particular for the three-dimensional Fourier transform on $\mathcal S(\R^3)$).
\end{remark}

The condition \ref{enAttCoeffHol} in \autoref{deAttCoeff} is motivated by the fact that the function $\check\chi$ in the electrodynamic model \autoref{eqElectrodynamicModel} is the inverse Fourier transform of a function whose support is inside $[0,\infty)$ and can therefore be holomorphically extended to the upper half plane. We will see later, see \autoref{thSolAttWave}, that this condition guarantees that the attenuated wave equation \autoref{eqWaveEquationAttenuation} has a causal solution in $\mathcal S'(\R\times\R^3)$, that is a solution whose support is contained in $[0,\infty)\times\R^3$.

Finally, the condition \ref{enAttCoeffSymm} in \autoref{deAttCoeff} is required so that the attenuation operator $\mathcal A_\kappa$ maps real-valued distributions  to real-valued distributions: To see this, let $u\in\mathcal S'(\R\times\R^3)$ be a real-valued distribution. Then, for two real-valued functions $\phi\in\mathcal S(\R)$ and $\psi\in\mathcal S(\R^3)$, the relation~\autoref{eqAttOp} implies
\[ \overline{\left<\mathcal A_\kappa u,\phi\otimes\psi\right>_{\mathcal S',\mathcal S}} = -\left<u,\overline{\mathcal F^{-1}\kappa^2\mathcal F\phi}\otimes\psi\right>_{\mathcal S',\mathcal S}. \]
By substituting the variable $\omega$ by $-\omega$ in the Fourier integral below, we get that
\[ \overline{(\mathcal F^{-1}\kappa^2\mathcal F\phi)(t)} = \frac1{2\pi}\int_{-\infty}^\infty\e^{-\i\omega t}\overline{\kappa^2(\omega)}\int_{-\infty}^\infty\e^{\i\omega\tau}\phi(\tau)\d\tau\d\omega = (\mathcal F^{-1}\kappa_{\mathrm r}^2\mathcal F\phi)(t) \]
with $\kappa_{\mathrm r}$ given by $\kappa_{\mathrm r}(\omega)=\overline{\kappa(-\omega)}$. Thus, the condition $\overline{\left<\mathcal A_\kappa u,\phi\otimes\psi\right>_{\mathcal S',\mathcal S}}=\left<\mathcal A_\kappa u,\phi\otimes\psi\right>_{\mathcal S',\mathcal S}$ is equivalent to $\kappa^2=\kappa_{\mathrm r}^2$. Besides the case of a constant, real function $\kappa$ (something we are not interested in), this is equivalent to $\kappa=-\kappa_{\mathrm r}$ because of the condition $\Im\tilde\kappa(z)\ge0$ for all $z\in\overline\H$.

\subsection{Solution of the Attenuated Wave Equation}
In this section, we want to determine the solution $p\in\mathcal S'(\R\times\R^3)$ of \autoref{eqWaveEquationAttenuation}. To this end, we do a Fourier transform of the wave equation and end up with a Helmholtz equation for each value $\omega\in\R$, which in the case $\Im\kappa(\omega)>0$ has a unique solution in the space of tempered distributions.

\begin{lemma}\label{thHelmholtz}
Let $\kappa$ be a complex number with positive imaginary part, that is $\kappa \in \H$, $f\in L^2(\R^3)$ with compact essential support. Then, the Helmholtz equation
\[ \kappa^2\left<u,\phi\right>_{\mathcal S',\mathcal S}+\left<u,\Delta\phi\right>_{\mathcal S',\mathcal S}=\int_{\R^3}f(x)\phi(x)\d x,\quad\phi\in\mathcal S(\R^3), \]
has a unique solution $u\in\mathcal S'(\R^3)$, which is explicitly given by
\begin{equation}\label{eqHelmholtzSol}
\left<u,\phi\right>_{\mathcal S',\mathcal S} = -\frac1{4\pi}\int_{\R^3}\int_{\R^3}\frac{\e^{\i\kappa|x-y|}}{|x-y|}f(y)\d y\,\phi(x)\d x,\quad\phi\in\mathcal S(\R^3).
\end{equation}
\end{lemma}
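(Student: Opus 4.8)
The plan is to pass to the Fourier transform, where the differential equation becomes an algebraic (multiplier) equation, and to exploit the hypothesis $\Im\kappa>0$ to control the resulting multiplier. Writing $\mathcal F$ for the three-dimensional Fourier transform and using $\widehat{\Delta\phi}(\xi)=-|\xi|^2\hat\phi(\xi)$ together with the self-adjointness of $\Delta$, the weak equation is equivalent to $(\Delta+\kappa^2)u=f$ and, after Fourier transform, to
\[ (\kappa^2-|\xi|^2)\,\hat u(\xi)=\hat f(\xi)\quad\text{in }\mathcal S'(\R^3). \]
The key observation is that, because $\Im\kappa>0$, writing $\kappa=a+\i b$ with $b>0$ gives $\kappa^2=(a^2-b^2)+2\i ab$, which is a nonnegative real number only if $a=0$, in which case $\kappa^2=-b^2<0$; hence $\kappa^2$ is never a nonnegative real, so $\kappa^2-|\xi|^2\neq0$ for all $\xi$, and in fact $\lvert\kappa^2-|\xi|^2\rvert$ is bounded below by the positive constant $\min\{2|ab|,b^2\}$ uniformly in $\xi$. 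Consequently $M(\xi):=(\kappa^2-|\xi|^2)^{-1}$ is smooth, bounded, and has polynomially bounded derivatives, i.e.\ it is a Fourier multiplier on $\mathcal S'(\R^3)$. Multiplication by $\kappa^2-|\xi|^2$ and by $M$ being mutually inverse on $\mathcal S'(\R^3)$ yields at once uniqueness (any homogeneous solution satisfies $(\kappa^2-|\xi|^2)\hat w=0$, hence $\hat w=M(\kappa^2-|\xi|^2)\hat w=0$) and existence of a tempered-distribution solution $u=\mathcal F^{-1}(M\hat f)$.

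Next I would identify this solution explicitly by exhibiting the convolution kernel as the decaying fundamental solution. I claim $G(x)=-\frac1{4\pi}\frac{\e^{\i\kappa|x|}}{|x|}$ satisfies $(\Delta+\kappa^2)G=\delta$ in $\mathcal S'(\R^3)$. To verify this I would split
\[ G(x)=-\frac1{4\pi|x|}-\frac1{4\pi}\frac{\e^{\i\kappa|x|}-1}{|x|}, \]
so that the first term produces $\delta$ through the classical identity $\Delta\frac1{|x|}=-4\pi\delta$, while the second term is continuous at the origin and thus contributes no distributional mass there; a routine radial computation shows $(\Delta+\kappa^2)G=0$ pointwise away from $0$, and the lower-order terms cancel against the $\kappa^2$ contribution, leaving exactly $\delta$. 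Here the sign in the exponent is dictated by $\Im\kappa>0$: only the branch $\e^{\i\kappa|x|}$ decays, and it is this decaying fundamental solution (equivalently, $\mathcal F^{-1}M$ up to the Fourier normalisation) that is consistent with the multiplier $M$ above. Since $f$ has compact support, convolution commutes with the constant-coefficient operator, so $u=G*f$ solves $(\Delta+\kappa^2)u=f$, and by the uniqueness already established it is \emph{the} solution.

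Finally I would assemble the explicit formula and confirm membership in $\mathcal S'(\R^3)$. Because $f\in L^2(\R^3)$ has compact essential support, $f\in L^1(\R^3)$, and the convolution
\[ u(x)=(G*f)(x)=-\frac1{4\pi}\int_{\R^3}\frac{\e^{\i\kappa|x-y|}}{|x-y|}f(y)\d y \]
is well defined: the singularity $|x-y|^{-1}$ is locally integrable in three dimensions, while the factor $\e^{\i\kappa|x-y|}$ carries the decay $\e^{-\Im\kappa\,|x-y|}$, so $u$ is bounded and decays exponentially, hence is tempered. Pairing against $\phi\in\mathcal S(\R^3)$ and applying Fubini gives precisely \autoref{eqHelmholtzSol}. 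I expect the main obstacle to be the kernel identification in the second step: one must treat the singularity of $G$ at the origin carefully to see that no spurious distributional mass appears there, and, crucially, use $\Im\kappa>0$ to select the decaying fundamental solution, since this is exactly the choice that makes $u$ tempered and reconciles the explicit formula with the multiplier argument.
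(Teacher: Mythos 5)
Your proof is correct, but it identifies the kernel by a genuinely different argument than the paper. The first half is essentially the paper's mechanism in dual form: both proofs rest on the fact that, for $\Im\kappa>0$, the symbol $\kappa^2-|\xi|^2$ is bounded away from zero uniformly in $\xi$ (indeed $|\kappa^2-|\xi|^2|\ge(\Im\kappa)^2$ for all $\xi$, which also repairs your constant $\min\{2|ab|,b^2\}$ --- that expression is not positive when $a=0$), so that division by the symbol is a tame operation; the paper performs this inversion on the test-function side, substituting $\psi=\kappa^2\phi+\Delta\phi$ and using that $\phi\mapsto\psi$ is a bijection of $\mathcal S(\R^3)$, while you perform it on the distribution side via the multiplier $M$, and either way uniqueness follows. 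The real divergence is in the second half: the paper \emph{derives} the kernel by computing $\mathcal F^{-1}\bigl[(\kappa^2-|k|^2)^{-1}\bigr]$ explicitly --- spherical coordinates reduce it to a one-dimensional integral, which is evaluated by the residue theorem, with $\Im\kappa>0$ selecting the pole in the upper half plane --- whereas you \emph{verify} that $G(x)=-\e^{\i\kappa|x|}/(4\pi|x|)$ is a fundamental solution by splitting off the Newtonian potential and invoking $\Delta|x|^{-1}=-4\pi\delta$, and then conclude $u=G*f$ by the uniqueness already established. Your route buys freedom from contour integration and stays within classical potential theory, at the price of being a verification (the kernel must be guessed in advance) and of one delicate point that you flag but do not carry out: showing that the continuous remainder $(\e^{\i\kappa|x|}-1)/|x|$ contributes no distribution supported at the origin. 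That step is standard and fillable --- its pointwise Laplacian is $O(|x|^{-1})$, hence locally integrable, and any discrepancy between the distributional and pointwise Laplacians would be a finite linear combination of $\delta$ and its derivatives, which is ruled out by testing against $\phi(\cdot/\varepsilon)$ and letting $\varepsilon\to0$, or by Green's identity on $\R^3\setminus B_\varepsilon(0)$ --- but it is precisely the work that the paper's residue computation does automatically. In exchange, the paper's calculation produces the formula with no guessing, which is why statement and proof there come packaged together.
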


\begin{proof}
Writing $\phi=\mathcal F^{-1}\hat\phi$, where $\mathcal F:\mathcal S(\R^3)\to\mathcal S(\R^3)$ denotes the three-dimensional Fourier transform,
we find with the function $\psi\in\mathcal S(\R^3)$ defined by $\psi=\kappa^2\phi+\Delta\phi$, and therefore $\hat\psi(k)=\mathcal F\psi(k)=(\kappa^2-|k|^2)\hat\phi(k)$, that
\begin{equation}\label{eqHelmholtzFourier}
\left<u,\psi\right>_{\mathcal S',\mathcal S} =\int_{\R^3}f(x)\phi(x)\d x = \frac1{(2\pi)^{\frac32}}\int_{\R^3}f(x)\int_{\R^3}\frac{\hat\psi(k)}{\kappa^2-|k|^2}\e^{\i\left<k,x\right>}\d k\d x.
\end{equation}
The inner integral is the inverse Fourier transform of a product and can thus be written as the convolution of two inverse Fourier transforms:
\begin{equation}\label{eqHelmholtzFourierTestFct}
\frac1{(2\pi)^{\frac32}}\int_{\R^3}\frac{\hat\psi(k)}{\kappa^2-|k|^2}\e^{\i\left<k,x\right>}\d k = \frac1{(2\pi)^3}\int_{\R^3}\psi(x-y)\int_{\R^3}\frac{\e^{\i\left<k,y\right>}}{\kappa^2-|k|^2}\d k\d y.
\end{equation}
Using spherical coordinates, we obtain by substituting $\rho=|k|$ and $\cos\theta=\frac{\left<k,y\right>}{|k||y|}$ that
\begin{align*}
\int_{\R^3}\frac{\e^{\i\left<k,y\right>}}{\kappa^2-|k|^2}\d k &= 2\pi\int_0^\infty\int_0^\pi\frac{\e^{\i\rho|y|\cos\theta}}{\kappa^2-\rho^2}\rho^2\sin\theta\d\theta\d\rho \\
&= \frac{4\pi}{|y|}\int_0^\infty\frac{\rho\sin(\rho|y|)}{\kappa^2-\rho^2}\d\rho = -\frac{2\pi\i}{|y|}\int_{-\infty}^\infty\frac{\rho\e^{\i\rho|y|}}{\kappa^2-\rho^2}\d\rho.
\end{align*}
Extending the integrand on the right hand side to a meromorphic function on the upper half complex plane, we can use the residue theorem to calculate the integral and find by taking into account that $\kappa \in \H$ that
\begin{equation}\label{eqHelmholtzFundSol}
\int_{\R^3}\frac{\e^{\i\left<k,y\right>}}{\kappa^2-|k|^2}\d k = -2\pi^2\frac{\e^{\i\kappa|y|}}{|y|}.
\end{equation}
Inserting \autoref{eqHelmholtzFundSol} into \autoref{eqHelmholtzFourierTestFct} and further into \autoref{eqHelmholtzFourier}, and remarking that $\psi$ is indeed an arbitrary function in $\mathcal S(\R^3)$, we end up with \autoref{eqHelmholtzSol}.
\end{proof}

To translate the initial condition in \autoref{eqWaveEquationAttenuation} that the solution $p$ vanishes for negative times into Fourier space, we use that the Fourier transform of such a function can be characterised by being polynomially bounded on the upper half complex plane away from the real axis.

We will briefly summarise the theory as we need it. For a detailed exposition, we refer to~\cite[Chapter 7.4]{Hoe03}.

\begin{definition}\label{deFourierLaplace}
Let $u\in\mathcal D'(\R)$ be a distribution with $\supp u\subset[0,\infty)$ such that $e_{-\eta}u\in\mathcal S'(\R)$ for every $\eta>0$, where we denote by $e_z\in C^\infty(\R)$, $z\in\C$, the function $e_z(t)=\e^{z t}$.

We define the \emph{adjoint Fourier--Laplace transform} $\check u:\H\to\C$ of $u$ by choosing for every point $z\in\H$ an arbitrary $\eta_z\in(0,\Im z)$ and by setting
\[ \check u(z) = \frac1{\sqrt{2\pi}}\left<e_{-\eta_z}u,\tilde e_{\i z+\eta_z}\right>_{\mathcal S',\mathcal S}. \]
Here $\tilde e_z$ denotes for every $z\in\C$ with $\Re z<0$ an arbitrary extension of the function $e_z|_{[0,\infty)}$ to the negative axis such that $\tilde e_z\in\mathcal S(\R)$. (The definition does not depend on the choice of the extension, since $\supp u\subset[0,\infty)$, see the proof of \cite[Theorem~2.3.3]{Hoe03}.)

Note that if $u$ is a regular distribution: $\left<u,\phi\right>_{\mathcal D',\mathcal D}=\int_{-\infty}^\infty U(t)\phi(t)\d t$ for all $\phi\in C^\infty_{\mathrm c}(\R)$ with an integrable function $U:\R\to\C$ with $\supp U\subset[0,\infty)$, then this is exactly the holomorphic extension of the inverse Fourier transform of $U$ to the upper half plane:
\[ \check u(z) = \frac1{\sqrt{2\pi}}\int_0^\infty U(t) e^{\i zt}\d t,\quad z\in\overline\H. \]
\end{definition}

With this construction, we have that the inverse Fourier transform of the tempered distribution $u_\eta=e_{-\eta}u$ for $\eta>0$ is the regular distribution corresponding to $\check u(\cdot+\i\eta)$, that is,
\begin{equation}\label{eqFourierLaplaceRelation}
\left<\mathcal F^{-1}u_\eta,\phi\right>_{\mathcal S',\mathcal S} = \int_{-\infty}^\infty\check u(\omega+\i\eta)\phi(\omega)\d\omega.
\end{equation}

Now, the causality of a distribution $u$, that is, $\supp u\subset[0,\infty)$, can be written in the form of a polynomial bound on its Fourier--Laplace transform $\check u$.

\begin{lemma}\label{thCausalityFourier}
We use again for every $z\in\C$ the notation $e_z\in C^\infty(\R)$ for the function $e_z(t)=\e^{\i zt}$.
\begin{enumerate}
\item
Let $u\in\mathcal D'(\R)$ be a distribution with $\supp u\subset[0,\infty)$ and such that $e_{-\eta}u\in\mathcal S'(\R)$ for every $\eta>0$.

Then, we find for every $\eta_1>0$ constants $C>0$ and $N\in\N$ such that the adjoint Fourier--Laplace transform $\check u$ of $u$ fulfils
\begin{equation}\label{eqCausalityFourier}
|\check u(z)|\le C(1+|z|)^N\quad\text{for all}\quad z\in\C\quad\text{with}\quad\Im z\ge\eta_1.
\end{equation}
\item
Conversely, if we have a holomorphic function $\check u:\H\to\C$ such that there exist for every $\eta_1>0$ constants $C>0$ and $N\in\N$ with
\begin{equation}\label{eqCausalityFourierReverse}
|\check u(z)|\le C(1+|z|)^N\quad\text{for all}\quad z\in\C\quad\text{with}\quad\Im z\ge\eta_1,
\end{equation}
then $\check u$ coincides with the adjoint Fourier--Laplace transform of a distribution $u\in\mathcal D'(\R)$ with $\supp u\subset[0,\infty)$ and the property that $e_{-\eta}u\in\mathcal S'(\R)$ for all $\eta>0$.
\end{enumerate}
\end{lemma}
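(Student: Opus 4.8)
The statement is a Paley--Wiener--Schwartz characterisation of causality for the polynomially damped, one--sided distributions of \autoref{deFourierLaplace}, so the plan is to prove the two implications separately, following the pattern of \cite[Chapter~7.4]{Hoe03}. In both directions the two ingredients that carry the argument are the structure theorem for tempered distributions and the holomorphy of $\check u$; the support condition $\supp u\subset[0,\infty)$ and the polynomial bound \autoref{eqCausalityFourier} are precisely the two faces that get exchanged.

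For part \textit{(i)} I would fix $\eta_1>0$, set $\eta=\tfrac{\eta_1}{2}$, and use that the value $\check u(z)$ is independent of the choice of $\eta_z\in(0,\Im z)$: since $\Im z\ge\eta_1>\eta$ I may take $\eta_z=\eta$ uniformly, so that $\check u(z)=\tfrac1{\sqrt{2\pi}}\langle u_\eta,\tilde e_{\i z+\eta}\rangle_{\mathcal S',\mathcal S}$ with $u_\eta:=e_{-\eta}u\in\mathcal S'(\R)$ fixed. The idea is then to represent $u_\eta$ by an ordinary function. By the structure theorem one may write $u_\eta=G^{(m)}$ for a single continuous, polynomially bounded $G$ and some $m\in\N$; since $\partial^m G=u_\eta$ vanishes on $(-\infty,0)$, the function $G$ coincides there with a polynomial $P$ of degree $<m$, and after subtracting $P$ I may assume $g:=G-P$ is continuous, polynomially bounded, with $\supp g\subset[0,\infty)$ and $u_\eta=g^{(m)}$. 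Choosing any Schwartz extension $\tilde e_{\i z+\eta}$ equal to $t\mapsto\e^{(\i z+\eta)t}$ on $[0,\infty)$ and integrating by parts $m$ times, the support of $g$ discards the negative axis and leaves
\[ \check u(z)=\frac{(-1)^m(\i z+\eta)^m}{\sqrt{2\pi}}\int_0^\infty g(t)\,\e^{(\i z+\eta)t}\d t. \]
Because $\Im z-\eta\ge\tfrac{\eta_1}{2}$, the factor $\e^{(\eta-\Im z)t}$ dominates the polynomial growth of $g$, so the integral is bounded uniformly in $z$, while the prefactor contributes $|\i z+\eta|^m\le(|z|+\eta_1)^m$; this yields \autoref{eqCausalityFourier} with $N=m$.

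For the converse \textit{(ii)} I would note that for each $\eta>0$ the boundary function $\omega\mapsto\check u(\omega+\i\eta)$ is continuous and polynomially bounded, hence defines a tempered distribution, and set $u_\eta:=\mathcal F[\check u(\cdot+\i\eta)]\in\mathcal S'(\R)$ and $u:=e_\eta u_\eta\in\mathcal D'(\R)$, so that $\mathcal F^{-1}u_\eta=\check u(\cdot+\i\eta)$ reproduces \autoref{eqFourierLaplaceRelation}. For $\psi\in C_{\mathrm c}^\infty(\R)$ the pairing turns into a horizontal contour integral
\[ \langle e_\eta u_\eta,\psi\rangle_{\mathcal D',\mathcal D}=\int_{\Im\zeta=\eta}\check u(\zeta)\,\Psi(\zeta)\d\zeta,\qquad \Psi(\zeta)=\tfrac1{\sqrt{2\pi}}\int_\R\psi(t)\e^{-\i\zeta t}\d t, \]
where $\Psi$ is entire and, by the Paley--Wiener theorem for compactly supported smooth functions, decays faster than any polynomial along horizontal lines. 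Since $\check u\,\Psi$ is holomorphic on $\H$ and the rapid decay of $\Psi$ beats the polynomial growth of $\check u$ on the vertical boundary segments, Cauchy's theorem shows the integral is independent of $\eta$, so $u$ is well defined. To locate the support I would take $\supp\psi\subset(-\infty,-\delta]$ with $\delta>0$; then the factor $\e^{\eta t}$ in $\Psi(\omega+\i\eta)$ is bounded by $\e^{-\eta\delta}$ on $\supp\psi$, and integrating by parts in $t$ gives $|\Psi(\omega+\i\eta)|\le C_M\,\e^{-\eta\delta}(1+\eta)^M(1+|\omega|)^{-M}$ for every $M$. Inserting the polynomial bound on $\check u$ and taking $M$ large yields $|\langle u,\psi\rangle|\le C\,\e^{-\eta\delta}(1+\eta)^{N'}\to0$ as $\eta\to\infty$, whence $\supp u\subset[0,\infty)$. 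Finally $e_{-\eta}u=u_\eta\in\mathcal S'(\R)$ by construction, and since the adjoint Fourier--Laplace transform of this $u$ has the prescribed boundary values $\check u(\cdot+\i\eta)$ on every horizontal line, the identity theorem identifies it with $\check u$.

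I expect the genuine obstacle to lie entirely in part \textit{(ii)}: making the contour deformation and the limit $\eta\to\infty$ rigorous, i.e. verifying that the polynomial growth of $\check u$ is always absorbed by the Paley--Wiener decay of the analytic test function $\Psi$, both on the vanishing vertical segments (for $\eta$-independence) and in the support estimate (for $\supp u\subset[0,\infty)$). This is the one place where holomorphy is used in an essential way. In part \textit{(i)} the only subtlety is keeping the negative axis out of the estimate, where $\e^{(\i z+\eta)t}$ grows exponentially in $\Im z$; this is resolved for free once the representing function $g$ is arranged to be supported in $[0,\infty)$, and the remaining bounds are routine.
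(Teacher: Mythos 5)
Your proof is correct, and in part \textit{(i)} it takes a genuinely different route from the paper's. The paper never invokes the structure theorem: it splits $u=u_1+u_2$ with a smooth cutoff, where $u_1$ has compact support in $[0,1]$ and $u_2$ is supported in $[0,\infty)$, and then evaluates the raw continuity estimates --- the Schwartz seminorm bound for $e_{-\eta_0}u_2$ (which involves suprema over $t\ge0$ only) and the finite-order bound of \cite[Theorem 2.3.10]{Hoe03} for the compactly supported $e_{-\eta_0}u_1$ --- at the test function $\tilde e_{\i z+\eta_0}$, whose relevant seminorms grow only polynomially in $|z|$ because the exponential decays on $[0,\infty)$ once $\Im z\ge\eta_1>\eta_0$. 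Your structure-theorem argument instead yields the explicit representation $\check u(z)=\frac{(-1)^m(\i z+\eta)^m}{\sqrt{2\pi}}\int_0^\infty g(t)\,\e^{(\i z+\eta)t}\d t$ and hence the order $N=m$ at once; the price is the structure theorem as a black box, where the paper stays at the level of defining seminorm estimates. Both arguments neutralise the same danger --- the arbitrary Schwartz extension of the exponential to the negative axis is never actually evaluated there (in your case because $\supp g\subset[0,\infty)$, in the paper's because the seminorms for $u_2$ run over $[0,\infty)$ and $u_1$ lives on $[0,1]$). In part \textit{(ii)} your construction of $u_\eta$, the support argument (shift the contour up and let $\eta\to\infty$ against the Paley--Wiener decay of $\Psi$), and the conclusion coincide with the paper's; the one methodological difference is the well-definedness of $u=e_\eta u_\eta$: the paper differentiates in $\eta$, showing $\partial_\eta u_\eta=fu_\eta$ with $f(t)=-t$ (via $\partial_\eta\check u(\omega+\i\eta)=\i\check u'(\omega+\i\eta)$ and an integration by parts in $\omega$), so that $\partial_\eta(e_\eta u_\eta)=0$, whereas you deform contours with Cauchy's theorem; these are equivalent uses of the holomorphy of $\check u$. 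Your closing identification step is terser than the paper's --- the paper verifies the Fourier--Laplace transform of the constructed $u$ directly through a $\delta$-sequence limit of test functions, while you appeal to \autoref{eqFourierLaplaceRelation} and agreement on horizontal lines --- but it is sound, since \autoref{eqFourierLaplaceRelation} is stated in the paper as a general property of the construction in \autoref{deFourierLaplace}.
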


\begin{proof}
\begin{enumerate}
\item[]
\item
Let $\eta_1>0$ and $\eta_0\in(0,\eta_1)$ be arbitrary. We choose a function $\psi\in C^\infty(\R)$ with $\psi(t)=1$ for $t\in(-\infty,0]$ and $\psi(t)=0$ for $t\in[1,\infty)$. Then, we write the given distribution $u$ in the form $u=u_1+u_2$ by setting
\[ \left<u_1,\phi\right>_{\mathcal D',\mathcal D}=\left<u,\psi\phi\right>_{\mathcal D',\mathcal D}\quad\text{and}\quad\left<u_2,\phi\right>_{\mathcal D',\mathcal D}=\left<u,(1-\psi)\phi\right>_{\mathcal D',\mathcal D}\quad\text{for all}\quad\phi\in C^\infty_{\mathrm c}(\R). \]

Since we have by assumption $e_{-\eta_0}u\in\mathcal S'(\R)$ and since $u_1$ has by construction compact support, we get that $e_{-\eta_0}u_2\in\mathcal S'(\R)$. Thus, because of $\supp(1-\psi)\subset[0,\infty)$, there exist constants $A_2>0$ and $N_2\in\N$ such that
\begin{equation}\label{eqFourierDistCont}
\left|\left<e_{-\eta_0}u_2,\phi\right>_{\mathcal S',\mathcal S}\right| = \left|\left<e_{-\eta_0}u,(1-\psi)\phi\right>_{\mathcal S',\mathcal S}\right| \le A_2\sum_{k,\ell=0}^{N_2}\sup_{t\in[0,\infty)}|t^\ell\phi^{(k)}(t)|\quad\text{for all}\quad\phi\in\mathcal S(\R).
\end{equation}

Moreover, since $e_{-\eta_0}u_1$ has compact support $\supp(e_{-\eta_0}u_1)\subset[0,1]$, we find, see for example \cite[Theorem 2.3.10]{Hoe03}, constants $A_1>0$ and $N_1\in\N$ so that
\begin{equation}\label{eqFourierCpDist}
\left|\left<e_{-\eta_0}u_1,\phi\right>_{\mathcal E',\mathcal E}\right| \le A_1\sum_{k=0}^{N_1}\sup_{t\in[0,1]}|\phi^{(k)}(t)|\quad\text{for all}\quad\phi\in C^\infty(\R).
\end{equation}

We now define as in \autoref{deFourierLaplace} for $z\in\C$ with $\Re z>0$ an extension $\tilde e_z\in\mathcal S(\R)$ of the function $e_z|_{[0,\infty)}$ and choose for every $z\in\H$ the function $\phi=\tilde e_{\i z+\eta_0}$ in \autoref{eqFourierDistCont} and \autoref{eqFourierCpDist}.
Then, there exists a constant $C>0$ such that with $N=\max\{N_1,N_2\}$
\begin{align*}
|\check u(z)| &= \frac1{\sqrt{2\pi}}\left|\left<e_{-\eta_0}u,\tilde e_{\i z+\eta_0}\right>_{\mathcal S',\mathcal S}\right| \\
&\le \frac1{\sqrt{2\pi}}\left|\left<e_{-\eta_0}u_1,\tilde e_{\i z+\eta_0}\right>_{\mathcal S',\mathcal S}\right|+\frac1{\sqrt{2\pi}}\left|\left<e_{-\eta_0}u_2,\tilde e_{\i z+\eta_0}\right>_{\mathcal S',\mathcal S}\right| \le C(1+|z|)^N
\end{align*}
holds for every $z\in\H$ with $\Im z\ge\eta_1$.
\item
To construct the distribution $u$, we define from the given function $\check u$ for every $\eta>0$ the distribution $u_\eta\in\mathcal S'(\R)$ via the relation \autoref{eqFourierLaplaceRelation}, so that the inverse Fourier transform of $u_\eta$ is given by the regular distribution corresponding to the function $\omega\mapsto\check u(\omega+\i\eta)$.

Now, we want to show that $e_\eta u_\eta$ is in fact independent of $\eta$, so that there exists a distribution $u$ such that $u_\eta=e_{-\eta}u$ for every $\eta>0$. To do so, we first remark that the derivative $\partial_\eta u_\eta$ of $u_\eta$ with respect to $\eta$ fulfils for every $\phi\in\mathcal S(\R)$
\[ \left<\partial_\eta u_\eta,\phi\right>_{\mathcal S',\mathcal S} = \i\int_{-\infty}^\infty\check u'(\omega+\i\eta)\hat\phi(\omega)\d\omega = -\i\int_{-\infty}^\infty\check u(\omega+\i\eta)\hat\phi'(\omega)\d\omega = \left<u_\eta,f\phi\right>_{\mathcal S',\mathcal S}, \]
where $\hat\phi=\mathcal F\phi$ denotes the Fourier transform of $\phi$ and $f(t)=-t$, so that $\hat\phi'=\i\mathcal F(f\phi)$. Thus, $\partial_\eta u_\eta=fu_\eta$ and therefore, $\partial_\eta(e_\eta u_\eta)=e_\eta(\partial_\eta u_\eta-fu_\eta)=0$, proving that $e_\eta u_\eta$ is independent of $\eta$.

So, the distribution $u=e_\eta u_\eta\in\mathcal D'(\R)$ is well-defined and fulfils by construction that $e_{-\eta}u\in\mathcal S'(\R)$ for every $\eta>0$.

Next, we want to show that $\supp u\subset[0,\infty)$. Let $\phi\in C^\infty_{\mathrm c}(\R)$ and write again $\hat\phi=\mathcal F\phi$. Then, by our construction, we have for every $\eta_1>0$ that
\begin{equation}\label{eqCausalityFourierSuppProof}
\left<e_{-\eta_1}u,\phi\right>_{\mathcal S',\mathcal S} = \int_{-\infty}^\infty\check u(\omega+\i\eta_1)\hat\phi(\omega)\d\omega.
\end{equation}
Since $\hat\phi$ is the Fourier transform of a function with compact support, we can extend it holomorphically to $\C$ and get for every $N_1\in\N_0$ a constant $C_1>0$ such that the upper bound
\[ |\hat\phi(z)| = \frac1{\sqrt{2\pi}}\left|\int_{-\infty}^\infty\phi(t)\e^{-\i zt}\d t\right| \le \frac{C_1}{(1+|z|)^{N_1}}\e^{\sup_{t\in\supp\phi}(t\Im z)} \]
holds.

Therefore, we can shift the line of integration in \autoref{eqCausalityFourierSuppProof} by an arbitrary value $\eta>0$ upwards in the upper half plane and get with the upper bound \autoref{eqCausalityFourierReverse} that
\[ \left|\left<e_{-\eta_1}u,\phi\right>_{\mathcal S',\mathcal S}\right| = \left|\int_{-\infty}^\infty\check u(\omega+\i(\eta_1+\eta))\hat\phi(\omega+\i\eta)\d\omega\right| \le A \e^{\eta\sup_{t\in\supp\phi}t} \]
for some constant $A>0$. Choosing now $\phi$ such that $\supp\phi\subset(-\infty,0)$ and taking the limit $\eta\to\infty$, the right hand side tends to zero, showing that $\left<e_{-\eta_1}u,\phi\right>_{\mathcal S',\mathcal S}=0$ whenever $\supp\phi\subset(-\infty,0)$. Thus, $e_{-\eta_1}u$, and therefore also $u$, has only support on $[0,\infty)$.

Finally, we verify that the Fourier--Laplace transform of $u$ is given by $\check u$. Indeed, given any $\omega\in\R$ and $\eta>0$, we have by construction for every $\eta_1\in(0,\eta)$ and every extension $\tilde e_z\in\mathcal S(\R)$ of $e_z|_{[0,\infty)}$ for $z\in\C$ with $\Re z>0$ that
\[ \left<e_{-\eta_1}u,\tilde e_{\i(\omega+\i\eta)+\eta_1}\right>_{\mathcal S',\mathcal S}=\int_{-\infty}^\infty\check u(\omega_1+\i\eta_1)\mathcal F\tilde e_{\i\omega+(\eta_1-\eta)}(\omega_1)\d\omega_1. \]
Since $\supp u\subset[0,\infty)$, we know that this expression is independent of the concrete choice of the extension $\tilde e_z$. Moreover, both sides are independent of $\eta_1$. Thus, letting on the right hand side $\tilde e_z$ converge to $e_z$ and $\eta_1$ to $\eta$, $\mathcal F\tilde e_{\i\omega+(\eta_1-\eta)}$ will tend to $\sqrt{2\pi}$ times the $\delta$-distribution at $\omega$, and we therefore get
\[ \frac1{\sqrt{2\pi}}\left<e_{-\eta_1}u,\tilde e_{\i(\omega+\i\eta)+\eta_1}\right>_{\mathcal S',\mathcal S}=\check u(\omega+\i\eta). \]
\end{enumerate}
\end{proof}

We now return to the solution of the attenuated wave equation \autoref{eqWaveEquationAttenuation}.
\begin{proposition}\label{thSolAttWave}
Let $\kappa$ be an attenuation coefficient and $\mathcal A_\kappa:\mathcal S'(\R\times\R^3)\to\mathcal S'(\R\times\R^3)$ be the corresponding attenuation operator. Let further $h\in L^2(\R^3)$ with compact essential support.

Then, the attenuated wave equation
\begin{equation}\label{eqSolAttWaveEq}
\left<\mathcal A_\kappa p,\vartheta\right>_{\mathcal S',\mathcal S}+\left<\Delta p,\vartheta\right>_{\mathcal S',\mathcal S} = -\int_{\R^3}h(x)\partial_t\vartheta(0,x)\d x,\quad\vartheta\in\mathcal S(\R\times\R^3),
\end{equation}
where the Laplace operator $\Delta:\mathcal S'(\R\times\R^3)\to\mathcal S'(\R\times\R^3)$ is defined by
\[ \left<\Delta u,\phi\otimes\psi\right>_{\mathcal S',\mathcal S} = \left<u,\phi\otimes(\Delta\psi)\right>_{\mathcal S',\mathcal S}\quad\text{for all}\quad\phi\in\mathcal S(\R),\;\psi\in\mathcal S(\R^3), \]
has a unique solution $p\in\mathcal S'(\R\times\R^3)$ with $\supp p\subset[0,\infty)\times\R^3$.

Moreover, $p$ is of the form
\begin{equation}\label{eqSolAttWave}
\left<p,\phi\otimes\psi\right>_{\mathcal S',\mathcal S} =  \int_{-\infty}^\infty\int_{\R^3}\int_{\R^3}G_\kappa(\omega,x-y)h(y)\d y\,\hat\phi(\omega)\psi(x)\d x\d\omega,
\end{equation}
where $\hat\phi$ denotes the Fourier transform of $\phi$ and $G$ denotes the integral kernel
\begin{equation}\label{eqSolAttWaveKernel}
G_\kappa(\omega,x) = -\frac{\i\omega}{4\pi\sqrt{2\pi}}\frac{\e^{\i\kappa(\omega)|x|}}{|x|},\quad\omega\in\R,\;x\in\R^3\setminus\{0\}.
\end{equation}
\end{proposition}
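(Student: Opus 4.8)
The plan is to Fourier transform the weak equation \autoref{eqSolAttWaveEq} in the time variable, reducing it to the family of Helmholtz equations already solved in \autoref{thHelmholtz}, and then to recover causality and uniqueness from the holomorphic-extension hypothesis \ref{enAttCoeffHol} through \autoref{thCausalityFourier}. Testing \autoref{eqSolAttWaveEq} against a tensor product $\phi\otimes\psi$ and inserting the definition \autoref{eqAttOp} of $\mathcal A_\kappa$ together with the definition of $\Delta$, one sees that $\mathcal A_\kappa$ acts on the frequency side as multiplication by $-\kappa^2(\omega)$ and $\Delta$ as $\Delta_x$, while the right hand side $-\int_{\R^3}h\,\partial_t\vartheta(0,\cdot)$ becomes a pairing against $\frac{\i\omega}{\sqrt{2\pi}}h$. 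Hence, for each fixed $\omega$, the transformed equation is the Helmholtz equation of \autoref{thHelmholtz} with wave number $\kappa(\omega)$ and source $f=\frac{\i\omega}{\sqrt{2\pi}}h$, whose solution is $P(\omega,\cdot)=\int_{\R^3}G_\kappa(\omega,\cdot-y)h(y)\d y$ with $G_\kappa$ as in \autoref{eqSolAttWaveKernel}. This simultaneously identifies the candidate \autoref{eqSolAttWave} and dictates its form.

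Next I would check that \autoref{eqSolAttWave} defines a tempered distribution and solves \autoref{eqSolAttWaveEq}. Since $\Im\kappa(\omega)\ge0$, we have $|G_\kappa(\omega,x)|\le\frac{|\omega|}{4\pi\sqrt{2\pi}}|x|^{-1}$, so for fixed $\omega$ the map $x\mapsto\int_{\R^3}G_\kappa(\omega,x-y)h(y)\d y$ is, like the Newtonian potential of the compactly supported $L^2$ density $h$, a bounded continuous function whose size grows at most linearly in $|\omega|$. Together with the polynomial bound \autoref{eqAttCoeffPolBdd} and the rapid decay of $\hat\phi$ and $\psi$, this makes the triple integral absolutely convergent and continuous in $\phi\otimes\psi$, so it extends to a functional on $\mathcal S(\R\times\R^3)$. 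That this $p$ solves \autoref{eqSolAttWaveEq} is the computation of the first paragraph read backwards: the identity $(\Delta+\kappa^2)\frac{\e^{\i\kappa|x|}}{4\pi|x|}=-\delta$ holds for every $\kappa\in\C$, so $P(\omega,\cdot)$ satisfies the Helmholtz equation for all $\omega$ (no restriction $\Im\kappa(\omega)>0$ is needed here), and resubstituting reproduces the right hand side.

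The substantial part is the causality $\supp p\subset[0,\infty)\times\R^3$, and this is where hypothesis \ref{enAttCoeffHol} enters. For fixed $\psi\in\mathcal S(\R^3)$ let $u_\psi\in\mathcal D'(\R)$ be given by $\langle u_\psi,\phi\rangle=\langle p,\phi\otimes\psi\rangle$. I would show that its adjoint Fourier--Laplace transform is the function $z\mapsto\int_{\R^3}\int_{\R^3}\tilde G_\kappa(z,x-y)h(y)\d y\,\psi(x)\d x$, where $\tilde G_\kappa$ is built from the holomorphic continuation $\tilde\kappa$. Differentiation under the integral shows this is holomorphic on $\H$, and the bound $|\e^{\i\tilde\kappa(z)|x|}|=\e^{-\Im\tilde\kappa(z)|x|}\le1$ combined with the polynomial growth of $\tilde\kappa$ yields the estimate \autoref{eqCausalityFourierReverse}. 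The converse part of \autoref{thCausalityFourier} then gives $\supp u_\psi\subset[0,\infty)$ for every $\psi$, i.e.\ $\supp p\subset[0,\infty)\times\R^3$. I expect the bookkeeping of this transfer — in particular verifying that the Fourier--Laplace transform of $u_\psi$ really is the stated holomorphic function, with the correct boundary values on the real axis — to be the main obstacle.

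For uniqueness it suffices to show that a causal solution of the homogeneous equation vanishes. Taking the Fourier--Laplace transform, for each $z\in\H$ its transform solves the homogeneous Helmholtz equation with wave number $\tilde\kappa(z)$; here one must justify that $\mathcal A_\kappa$ becomes multiplication by $-\tilde\kappa^2(z)$ on the Fourier--Laplace side, which is again what \ref{enAttCoeffHol} secures. Since the harmonic function $\Im\tilde\kappa\ge0$ cannot vanish at an interior point of $\H$ without being constant (the minimum principle), and a real constant $\kappa$ is ruled out by the symmetry \ref{enAttCoeffSymm} together with $\kappa\ne0$, we have $\Im\tilde\kappa(z)>0$ throughout $\H$. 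The uniqueness statement of \autoref{thHelmholtz} then forces the transform to vanish for every $z\in\H$, so $u_\psi=0$ for all $\psi$ and hence $p=0$.
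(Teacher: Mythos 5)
Your proposal is correct and follows essentially the same route as the paper's proof: reduction to the Helmholtz equation via the adjoint Fourier--Laplace transform (\autoref{thHelmholtz}), causality of the explicit formula from the polynomial bound $|\langle\check p(z),\psi\rangle|\le C|z|$ and the converse part of \autoref{thCausalityFourier}, and uniqueness from the Helmholtz uniqueness at frequencies where $\Im\tilde\kappa(z)>0$, with the symmetry condition ruling out a real constant $\tilde\kappa$. The differences are only organizational --- you split existence (direct verification of the candidate) from uniqueness (homogeneous equation) and invoke the strong minimum principle to get $\Im\tilde\kappa>0$ on all of $\H$, where the paper instead derives the formula from an assumed causal solution and extends it by continuity from the set where $\Im\tilde\kappa>0$ --- and the contour-shifting, dominated-convergence argument that you defer as ``bookkeeping'' is exactly the step the paper carries out in detail.
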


\begin{proof}
Let $p\in\mathcal S'(\R\times\R^3)$ be a solution of \autoref{eqSolAttWaveEq} with $\supp p\subset[0,\infty)\times\R^3$. We evaluate the equation~\autoref{eqSolAttWaveEq} for $\vartheta=\phi\otimes\psi\in C^\infty_{\mathrm c}(\R\times\R^3)$ and write $\mathcal F\phi=\hat\phi$. It then follows that
\begin{equation}\label{eqSolAttWaveFourier}
-\left<p,\mathcal F^{-1}(\kappa^2\hat\phi)\otimes\psi\right>_{\mathcal S',\mathcal S}+\left<p,\mathcal F^{-1}\hat\phi\otimes\Delta\psi\right>_{\mathcal S',\mathcal S} = \phi'(0)\int_{\R^3}h(x)\psi(x)\d x.
\end{equation}
For arbitrary $z\in\H$, we define the adjoint Fourier--Laplace transform $\check p(z)\in\mathcal S'(\R^3)$ of $p$ by
\[ \left<\check p(z),\psi\right>_{\mathcal S',\mathcal S} = \frac1{\sqrt{2\pi}}\left<p,\tilde e_{\i z}\otimes\psi\right>_{\mathcal S',\mathcal S}, \]
where $\tilde e_z\in\mathcal S(\R)$, $z\in\C$ with $\Re z<0$, is an arbitrary extension of $\tilde e_z(t)=\e^{zt}$ for $t\ge0$, see \autoref{deFourierLaplace}. Then, $z\mapsto\left<\check p(z),\psi\right>$ is holomorphic in the upper half plane $\H$ and we have
\begin{align*}
\left<p,\mathcal F^{-1}(\kappa^2\hat\phi)\otimes\psi\right>_{\mathcal S',\mathcal S} &= \lim_{\xi\downarrow0}\left<p,\tilde e_{-\xi} \mathcal F^{-1}(\kappa^2\hat\phi)\otimes\psi\right>_{\mathcal S',\mathcal S} \\
&= \lim_{\xi\downarrow0}\int_{-\infty}^\infty\left<\check p(\omega+\i\xi),\psi\right>_{\mathcal S',\mathcal S}\kappa^2(\omega)\hat\phi(\omega)\d\omega.
\end{align*}

We replace $\kappa$ in the integrand now by its holomorphic extension $\tilde\kappa:\overline\H\to\overline\H$, see \ref{enAttCoeffHol} in \autoref{deAttCoeff}, and also extend the Fourier transform $\hat\phi$ of the compactly supported function $\phi$ holomorphically to $\C$. Since $z\mapsto\left<\check p(z),\psi\right>$ is the adjoint Fourier--Laplace transform of a distribution with support on $[0,\infty)$, it is polynomially bounded, see \autoref{thCausalityFourier}. Moreover, we have by \autoref{deAttCoeff} of the attenuation coefficient a polynomial bound on $\tilde\kappa$ and get therefore with the dominated convergence theorem that
\[ \left<p,\mathcal F^{-1}(\kappa^2\hat\phi)\otimes\psi\right>_{\mathcal S',\mathcal S} = \lim_{\xi\downarrow0}\lim_{\eta\downarrow0}\int_{-\infty}^\infty\left<\check p(\omega+\i(\xi+\eta)),\psi\right>_{\mathcal S',\mathcal S}\tilde\kappa^2(\omega+\i\eta)\hat\phi(\omega+\i\eta)\d\omega. \]
Since all functions in the integrand are holomorphic in the upper half plane the integral is independent of $\eta$ and we can therefore remove the limit with respect to $\eta$. Using again the dominated convergence theorem, we can evaluate now the limit with respect to $\xi$ and obtain for arbitrary $\eta>0$ the equality
\[ \left<p,\mathcal F^{-1}(\kappa^2\hat\phi)\otimes\psi\right>_{\mathcal S',\mathcal S} = \int_{-\infty}^\infty\left<\check p(\omega+\i\eta),\psi\right>_{\mathcal S',\mathcal S}\tilde\kappa^2(\omega+\i\eta)\hat\phi(\omega+\i\eta)\d\omega. \]

Inserting this into the equation \autoref{eqSolAttWaveFourier} and arguing in the same way for the two other terms therein, we see that $\check p(z)\in\mathcal S'(\R^3)$ solves for every $z\in\H$ the equation
\[ \tilde\kappa^2(z)\left<\check p(z),\psi\right>_{\mathcal S',\mathcal S}+\left<\check p(z),\Delta\psi\right>_{\mathcal S',\mathcal S} = \frac{\i z}{\sqrt{2\pi}}\int_{\R^3}h(x)\psi(x)\d x. \]
Thus, by \autoref{thHelmholtz}, we get for every $z\in\H$ with $\Im\tilde\kappa(z)>0$ that
\begin{equation}\label{eqSolAttWaveFourierSol}
\left<\check p(z),\psi\right>_{\mathcal S',\mathcal S} = -\frac{\i z}{4\pi\sqrt{2\pi}}\int_{\R^3}\int_{\R^3}\frac{\e^{\i\tilde\kappa(z)|x-y|}}{|x-y|}h(y)\d y\,\psi(x)\d x
\end{equation}
is the only solution. However, since $\tilde\kappa$ is holomorphic, its imaginary part cannot vanish in any open set unless $\tilde\kappa$ were a constant, real function which is excluded by the symmetry condition \ref{enAttCoeffSymm} in \autoref{deAttCoeff}. Therefore, we can uniquely extend the formula \autoref{eqSolAttWaveFourierSol} for $\left<\check p(z),\psi\right>_{\mathcal S',\mathcal S}$ by continuity to all $z\in\overline\H$.

It remains to verify that $\supp p\subset[0,\infty)\times\R^3$. To see this, we use that $\Im\tilde\kappa(z)\ge0$ for every $z\in\overline\H$ to estimate the integral in \autoref{eqSolAttWaveFourierSol} by
\[ \left|\left<\check p(z),\psi\right>_{\mathcal S',\mathcal S}\right| \le C|z| \]
with some constant $C>0$. Therefore, by \autoref{thCausalityFourier}, $\left<\check p(z),\psi\right>_{\mathcal S',\mathcal S}$ is the Fourier--Laplace transform of a distribution with support in $[0,\infty)$.
\end{proof}

\subsection{Finite Propagation Speed}
Seeing the equation \autoref{eqSolAttWaveEq} as a generalisation of the wave equation, it is natural to additionally impose that the solution propagates with finite speed.
\begin{definition}
We say that the solution $p\in\mathcal S'(\R\times\R^3)$ of the equation \autoref{eqSolAttWaveEq} \emph{propagates with finite speed} $c>0$ if
\[ \supp p\subset\{(t,x)\in\R\times\R^3\mid|x|\le ct+R\} \]
whenever $\supp h\subset B_R(0)$.
\end{definition}

We can give an explicit characterisation of the equations whose solutions propagate with finite speed in terms of the holomorphic extension $\tilde\kappa$ of the attenuation coefficient $\kappa$.
\begin{lemma}\label{thFiniteSpeedViaIm}
The solution $p$ of the attenuated wave equation \autoref{eqSolAttWaveEq} propagates with finite speed $c>0$ if the holomorphic extension $\tilde\kappa$ of the attenuation coefficient $\kappa$ fulfils
\[ \Im(\tilde\kappa(z)-\tfrac zc)\ge0\quad\text{for every}\quad z\in\overline\H. \]

Conversely, if there exists a sequence $(z_\ell)_{\ell=1}^\infty\subset\H$ with the properties that
\begin{itemize}
\item
there exists a parameter $\eta_1>0$ such that $\Im(z_\ell)\ge\eta_1$ for all $\ell\in\N$,
\item
we have $|z_\ell|\to\infty$ for $\ell\to\infty$, and
\item
there exists a parameter $\delta>0$ such that
\[ \Im(\tilde\kappa(z_\ell)-\tfrac{z_\ell}c)\le-\delta|z_\ell|\quad\text{for all}\quad\ell\in\N, \]
\end{itemize}
then $p$ propagates faster than with speed $c$.
\end{lemma}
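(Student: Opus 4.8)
The plan is to treat both directions as cone-analogues of the causality statement in \autoref{thCausalityFourier}, feeding the explicit Fourier--Laplace representation \autoref{eqSolAttWaveFourierSol} of $\check p(z)$ into the argument. The single decisive consequence of the hypothesis is elementary: on the line $\Im z=\eta$ the bound $\Im(\tilde\kappa(z)-z/c)\ge0$ forces $\Im\tilde\kappa(z)\ge\eta/c$, and hence $|\e^{\i\tilde\kappa(z)|x-y|}|=\e^{-\Im\tilde\kappa(z)|x-y|}\le\e^{-\frac\eta c|x-y|}$.

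For the forward direction I would first note that by \autoref{thSolAttWave} the solution already satisfies $\supp p\subset[0,\infty)\times\R^3$, so $z\mapsto\left<\check p(z),\psi\right>$ is holomorphic in $\H$ and polynomially bounded for $\Im z\ge\eta_1$. Fixing $\phi\in C_c^\infty(\R)$, $\psi\in C_c^\infty(\R^3)$ and shifting the contour exactly as in the proof of \autoref{thCausalityFourier}, I obtain $\left<p,\phi\otimes\psi\right>=\int_{\R}\left<\check p(\omega+\i\eta),\psi\right>\hat\phi(\omega+\i\eta)\d\omega$ for every $\eta>0$, where $\hat\phi$ is the entire extension with $|\hat\phi(\omega+\i\eta)|\le C_1(1+|\omega+\i\eta|)^{-N_1}\e^{\eta\sup_{t\in\supp\phi}t}$. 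Inserting \autoref{eqSolAttWaveFourierSol} and using $|x-y|\ge|x|-R$ for $y\in\supp h\subset B_R(0)$ together with the estimate above gives $|\left<\check p(\omega+\i\eta),\psi\right>|\le C|\omega+\i\eta|\,\e^{-\frac\eta c(r_0-R)}$, where $r_0=\inf_{x\in\supp\psi}|x|$. Choosing $N_1\ge3$ for integrability yields $|\left<p,\phi\otimes\psi\right>|\le C'\e^{\eta(t_{\max}-\frac1c(r_0-R))}$ with $t_{\max}=\sup_{t\in\supp\phi}t$. If $\supp\phi\times\supp\psi$ lies strictly outside the cone, then $r_0>c\,t_{\max}+R$, the exponent is negative, and letting $\eta\to\infty$ forces $\left<p,\phi\otimes\psi\right>=0$. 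Since tensor products are dense among test functions supported in a small box outside the cone, this yields $\supp p\subset\{|x|\le ct+R\}$.

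For the converse I would argue by contradiction: suppose $p$ still propagates with speed $c$ for every admissible $h$. Then, for $\psi$ concentrated near a point $x_0$ with $|x_0|=r_0$, the temporal distribution $\left<p,\cdot\otimes\psi\right>$ is supported in $t\ge(r_0-R')/c$ with $R'$ depending only on the fixed radii of $\supp h$ and $\supp\psi$, so the direct half of a shifted \autoref{thCausalityFourier} gives $|\left<\check p(z),\psi\right>|\le C(1+|z|)^N\e^{-\frac1c(r_0-R')\Im z}$ for $\Im z\ge\eta_1$. Evaluating \autoref{eqSolAttWaveFourierSol} along $z_\ell$ and using $\Im\tilde\kappa(z_\ell)\le\frac1c\Im z_\ell-\delta|z_\ell|$ produces $\e^{-\Im\tilde\kappa(z_\ell)|x-y|}\ge\e^{-\frac1c\Im z_\ell|x-y|}\e^{\delta|z_\ell||x-y|}$, i.e.\ exponential growth $\e^{\delta|z_\ell|r_0}$ in $|z_\ell|$. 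Since $\Im z_\ell\le|z_\ell|$ and $R'$ is independent of $r_0$, choosing $r_0$ with $\delta r_0>R'/c$ makes this growth beat both the allowed decay $\e^{-\frac1c(r_0-R')\Im z_\ell}$ and the polynomial prefactors, contradicting the upper bound for large $\ell$.

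The main obstacle is precisely the last point: turning the pointwise growth of the integrand into a genuine lower bound on the full integral $\int\int\frac{\e^{\i\tilde\kappa(z_\ell)|x-y|}}{|x-y|}h(y)\d y\,\psi(x)\d x$, because $\Re\tilde\kappa(z_\ell)$ may tend to infinity and the resulting oscillation could cause cancellation. I would kill the angular cancellation by taking $h$ and $\psi$ radially symmetric, which collapses the inner integral to an explicit far-field amplitude of the form $\frac{\e^{\i\tilde\kappa(z_\ell)|x|}}{|x|}F(\tilde\kappa(z_\ell))$, and then choose $h$ with only polynomially decaying Fourier transform (for instance the indicator of a ball) so that $|F(\tilde\kappa(z_\ell))|$ stays bounded below by a negative power of $|z_\ell|$ — still negligible against $\e^{\delta|z_\ell|r_0}$. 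Making this lower bound uniform along the sequence, and in particular excluding that $F$ vanishes exactly at the points $\tilde\kappa(z_\ell)$ (which may require passing to a subsequence or a small perturbation of $h$), is the delicate step the proof must carry out with care.
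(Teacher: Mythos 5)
Your forward direction is sound and is essentially the paper's argument (causality via the Fourier--Laplace bound of \autoref{thCausalityFourier}, using $\Im\tilde\kappa(z)\ge\Im z/c$ to kill the exponential), just executed at the level of a general source $h$ with product test functions instead of at the level of the kernel; that difference is cosmetic.

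The converse, however, has a genuine gap, and it is exactly the one you flag at the end: you never obtain a lower bound on the \emph{integral} $\left<\check p(z_\ell),\psi\right>$ from the pointwise exponential growth of its integrand. Your proposed repair does not close it. After the radial reduction, the quantity you must bound from below is a product of two entire functions of $\lambda_\ell=\tilde\kappa(z_\ell)$, namely $\int_0^R h(r)\,r\sin(\lambda_\ell r)\d r$ and $\int\psi(s)\,s\,\e^{\i\lambda_\ell s}\d s$, evaluated along a sequence $(\lambda_\ell)$ over which you have no control: the hypothesis only constrains $\Im(\tilde\kappa(z_\ell)-z_\ell/c)$, so $\Im\lambda_\ell$ may stay bounded (even tend to $0$) while $\Re\lambda_\ell\to\infty$. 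In that regime the claim that the ball-indicator choice of $h$ gives $|F(\tilde\kappa(z_\ell))|$ bounded below by a negative power of $|z_\ell|$ is unjustified: $F$ has an infinite string of zeros asymptotically on the real axis (the solutions of $\tan(\lambda R)=\lambda R$), and the adversarial sequence $\lambda_\ell$ can approach them arbitrarily fast; the smooth cut-off $\psi$ is even worse, since its transform decays super-polynomially on the real axis and also has zeros. ``Passing to a subsequence or perturbing $h$'' is not a proof — you would need a quantitative zero-avoidance statement holding simultaneously for all $\ell$, which is precisely the hard analytic content you have not supplied.

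The paper avoids this entirely by a different reduction: since finite propagation speed is required for \emph{every} admissible $h$, one may let $h$ tend to a point source, so that the object to test is the kernel itself. Shifting time by $|x|/c$, the relevant distribution $g_\tau(x)$ has the \emph{explicit} adjoint Fourier--Laplace transform
\[ \check g_\tau(z,x) = -\frac{\i z}{4\pi\sqrt{2\pi}}\,\frac{\e^{\i(\tilde\kappa(z)-\frac zc)|x|}}{|x|}, \]
a single exponential rather than an integral. Its modulus is computed exactly, so the hypothesis on the sequence gives $|\check g_\tau(z_\ell,x)|\ge\frac{|z_\ell|}{4\pi|x|\sqrt{2\pi}}\e^{\delta|x||z_\ell|}$, which violates the polynomial bound \autoref{eqCausalityFourier} of \autoref{thCausalityFourier} with no cancellation issue whatsoever. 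If you want to salvage your write-up, replace the ``fixed $h$, fixed $\psi$'' contradiction by this passage to the kernel (justifying the limit $h\to\delta$ from the fact that the support condition holds uniformly for approximate identities); the rest of your causality machinery then applies verbatim.
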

\begin{proof}
Since the solution $p\in\mathcal S'(\R\times\R^3)$ is a regular distribution with respect to the second component, see \autoref{eqSolAttWave}, having finite propagation speed is equivalent to the condition that the distribution $p(x)\in\mathcal S'(\R)$, given by
\[ \left<p(x),\phi\right>_{\mathcal S',\mathcal S} = \int_{-\infty}^\infty\int_{\R^3}G_\kappa(\omega,x-y)h(y)\d y\,\hat\phi(\omega)\d\omega,\quad x\in\R^3, \]
has $\supp p(x)\subset[\frac1c(|x|-R),\infty)$. Letting $h$ tend to a three dimensional $\delta$-distribution, we see that the distribution $g(x)\in\mathcal S'(\R)$, defined by
\[ \left<g(x),\phi\right>_{\mathcal S',\mathcal S} = \int_{-\infty}^\infty G_\kappa(\omega,x)\hat\phi(\omega)\d\omega, \]
has to fulfil $\supp g(x)\subset[\frac{|x|}c,\infty)$. If we shift $g(x)$ now by $\frac{|x|}c$ via $\tau:\mathcal S(\R)\to\mathcal S(\R)$, $(\tau\phi)(t)=\phi(t+\frac{|x|}c)$, this means that the distribution $g_\tau(x)\in\mathcal S'(\R)$, given by
\[ \left<g_\tau(x),\phi\right>_{\mathcal S',\mathcal S} = \left<g(x),\tau^{-1}\phi\right>_{\mathcal S',\mathcal S} = \int_{-\infty}^\infty G_\kappa(\omega,x)\e^{-\i\frac{\omega|x|}c}\hat\phi(\omega)\d\omega \]
has to have $\supp g_\tau(x)\subset[0,\infty)$. Extending the function $\omega\mapsto G_\kappa(\omega,x)\e^{-\i\frac{\omega|x|}c}$ to the upper half plane using the explicit formula \autoref{eqSolAttWaveKernel} for $G_\kappa$, we obtain the adjoint Fourier--Laplace transform $z\mapsto\check g_\tau(z,x)$ of the distribution $g_\tau(x)$:
\[ \check g_\tau(z,x) = -\frac{\i z}{4\pi\sqrt{2\pi}}\,\frac{\e^{\i(\tilde\kappa(z)-\frac zc)|x|}}{|x|}, \]
see \autoref{eqFourierLaplaceRelation}. According to \autoref{thCausalityFourier}, we can therefore equivalently characterise a finite propagation speed in terms of a polynomial bound on the function $\check g_\tau(\cdot,x)$.

\begin{itemize}
\item
If $\Im(\kappa(z)-\frac zc)\ge0$ for every $z\in\overline\H$, then the adjoint Fourier--Laplace transform of $g_\tau(x)$ fulfils that for every $x\in\R^3$ there exists a constant $C>0$ such that
\[ |\check g_\tau(z,x)| \le C|z|. \]
Thus, the condition \autoref{eqCausalityFourierReverse} of \autoref{thCausalityFourier} is satisfied and therefore $\supp g_\tau(x)\subset[0,\infty)$, so that $p$ propagates with the finite speed $c>0$.
\item
On the other hand, if there exists a sequence $(z_\ell)_{\ell=1}^\infty\subset\H$ with $\Im(z_\ell)\ge\eta_1$ for some $\eta_1>0$, $|z_\ell|\to\infty$, and $\Im(\tilde\kappa(z_\ell)-\frac{z_\ell}c)\le-\delta|z_\ell|$ for some $\delta>0$, then
\[ |\check g_\tau(z_\ell,x)| \ge \frac{|z_\ell|}{4\pi|x|\sqrt{2\pi}}\e^{\delta|x||z_\ell|}, \]
so that condition \autoref{eqCausalityFourier} of \autoref{thCausalityFourier} is violated and therefore the support of $g_\tau(x)$ cannot be contained in $[0,\infty)$.
\end{itemize}
\end{proof}

\begin{proposition}
Let $\kappa$ be an attenuation coefficient with the holomorphic extension $\tilde\kappa:\overline\H\to\overline\H$. Then, the solution $p$ of the attenuated wave equation \autoref{eqSolAttWaveEq} propagates with finite speed if and only if
\[ \lim_{\omega\to\infty}\frac{\tilde\kappa(\i\omega)}{\i\omega}>0. \]
In this case, it propagates with the speed $c=\lim_{\omega\to\infty}\frac{\i\omega}{\tilde\kappa(\i\omega)}$.
\end{proposition}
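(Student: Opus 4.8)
The plan is to reduce the statement to the sufficient and necessary conditions of \autoref{thFiniteSpeedViaIm} and to exploit that, by \autoref{deAttCoeff}, $\tilde\kappa$ is a holomorphic self-map of the upper half plane, i.e.\ a Nevanlinna--Herglotz function. First I would invoke the Herglotz representation to write, for $z\in\H$,
\[ \tilde\kappa(z) = az + b + \int_\R\left(\frac1{t-z}-\frac{t}{1+t^2}\right)\d\mu(t), \]
with $a\ge0$, $b\in\R$ and a positive measure $\mu$ satisfying $\int_\R\frac{\d\mu(t)}{1+t^2}<\infty$. Evaluating along the imaginary axis gives $\frac{\tilde\kappa(\i\omega)}{\i\omega}\to a$ as $\omega\to\infty$; in particular the imaginary part contributes $\frac{\Im\tilde\kappa(\i\omega)}{\omega}=a+\int_\R\frac{\d\mu(t)}{t^2+\omega^2}$, which decreases monotonically to $a$ (the integrand is dominated by $\frac1{1+t^2}$ for $\omega\ge1$). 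Hence $L:=\lim_{\omega\to\infty}\frac{\tilde\kappa(\i\omega)}{\i\omega}=a\ge0$ exists as a nonnegative real number, so that the criterion $L>0$ is meaningful and, for $L>0$, $\lim_{\omega\to\infty}\frac{\i\omega}{\tilde\kappa(\i\omega)}=\frac1L$. The decisive structural fact is that removing the linear term leaves a Herglotz function, $g(z):=\tilde\kappa(z)-Lz=b+\int_\R(\cdots)\d\mu(t)$, which again maps $\H$ into $\overline\H$; therefore, by continuity up to the boundary,
\[ \Im(\tilde\kappa(z)-Lz)\ge0\quad\text{for all}\quad z\in\overline\H. \]

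If $L>0$, I would apply the first part of \autoref{thFiniteSpeedViaIm} with $c=\frac1L$: the displayed inequality is precisely $\Im(\tilde\kappa(z)-\tfrac zc)\ge0$, so $p$ propagates with finite speed $\frac1L$. To obtain the equivalence and the sharpness of this value simultaneously, I would feed the converse part of \autoref{thFiniteSpeedViaIm} with the imaginary--axis sequence $z_\ell=\i\omega_\ell$, $\omega_\ell\to\infty$ (so $\Im z_\ell=\omega_\ell\ge1$ eventually and $|z_\ell|\to\infty$). For any target speed $\tilde c>0$ one computes
\[ \Im\Bigl(\tilde\kappa(\i\omega_\ell)-\tfrac{\i\omega_\ell}{\tilde c}\Bigr)=\omega_\ell\Bigl(\tfrac{\Im\tilde\kappa(\i\omega_\ell)}{\omega_\ell}-\tfrac1{\tilde c}\Bigr), \]
and since $\frac{\Im\tilde\kappa(\i\omega_\ell)}{\omega_\ell}\to L$, this is bounded above by $-\delta|z_\ell|$ for large $\ell$ as soon as $\frac1{\tilde c}>L$, with $\delta=\frac12\bigl(\frac1{\tilde c}-L\bigr)>0$. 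Thus $p$ propagates faster than every $\tilde c$ with $\frac1{\tilde c}>L$. When $L>0$ this rules out all $\tilde c<\frac1L$, so the propagation speed is exactly $c=\frac1L=\lim_{\omega\to\infty}\frac{\i\omega}{\tilde\kappa(\i\omega)}$; when $L=0$ it rules out every finite $\tilde c$, so no finite propagation speed exists. This settles both implications.

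The main obstacle I anticipate is the global boundary inequality $\Im(\tilde\kappa(z)-Lz)\ge0$ on all of $\overline\H$, together with the monotone convergence $\frac{\Im\tilde\kappa(\i\omega)}{\omega}\downarrow L$ used in the converse step. Neither follows from the mere existence of the limit along the imaginary axis: the half plane is unbounded and the naive maximum principle fails there (for example $z\mapsto\Im z$ is harmonic, vanishes on $\R$, yet is strictly positive inside), so one genuinely has to identify and split off the linear growth coefficient $a$. Both facts are, however, immediate from the Herglotz representation, which holds for every holomorphic map $\H\to\overline\H$ irrespective of its growth; this is why I would base the whole argument on that representation rather than on an \emph{ad hoc} Phragm\'en--Lindel\"of estimate in the half plane.
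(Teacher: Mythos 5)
Your proposal is correct and takes essentially the same route as the paper: both rest on the Nevanlinna--Herglotz representation of $\tilde\kappa$ to extract the linear coefficient, observe that $\tilde\kappa(z)-Lz$ is again a Herglotz function so that $\Im(\tilde\kappa(z)-Lz)\ge0$ on $\overline\H$, and then invoke the two halves of \autoref{thFiniteSpeedViaIm}, with the converse half applied along the imaginary axis. The only differences are cosmetic -- you use the representation with $\bigl(\frac1{t-z}-\frac t{1+t^2}\bigr)\d\mu(t)$ instead of the paper's $\frac{1+z\nu}{\nu-z}\d\sigma(\nu)$, and you additionally establish sharpness of the speed $1/L$, which goes slightly beyond what the paper's proof records.
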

\begin{proof}
We make use of the theory of Nevanlinna functions, see for example \cite[Chapter 3.1]{Akh65}. Similar to the Riesz--Herglotz formula, which characterises the functions mapping the unit circle to the upper half plane, we have that all holomorphic functions $\tilde\kappa:\H\to\overline\H$ have an integral representation of the form
\begin{equation}\label{eqNevanlinna}
\tilde\kappa(z) = Az+B+\int_{-\infty}^\infty\frac{1+z\nu}{\nu-z}\d\sigma(\nu),\quad z\in\H,
\end{equation}
where $\sigma:\R\to\R$ is a monotonically increasing function of bounded variation and $A\ge0$ and $B\in\R$ are arbitrary parameters, and vice versa, see \cite[Formula 3.3]{Akh65}.

Then, $\tilde\kappa(z)-Az$ is still of the form \autoref{eqNevanlinna} and therefore is a holomorphic function mapping $\H$ to $\overline\H$. In particular, it satisfies $\Im(\tilde\kappa(z)-Az)\ge0$ for all $z\in\H$. Thus, if $A>0$, $p$ propagates with the finite speed $c=\frac1A$ according to \autoref{thFiniteSpeedViaIm}.

Evaluating $\tilde\kappa$ along the imaginary axis, we find that asymptotically as $\omega\to\infty$
\begin{equation}\label{eqFinitePropagationAsymptotics}
\tilde\kappa(\i\omega) = \i\omega\left(A+\frac B{\i\omega}+\int_{-\infty}^\infty\frac{1+\i\omega\nu}{\i\omega(\nu-\i\omega)}\d\sigma(\nu)\right) = \i\omega(A+o(1)).
\end{equation}
Thus,
\[ A=\lim_{\omega\to\infty}\frac{\tilde\kappa(\i\omega)}{\i\omega}. \]
Moreover for $A=0$, we see from \autoref{eqFinitePropagationAsymptotics} that for every choice of $c>0$, we have the behaviour $\tilde\kappa(\i\omega)-\frac{\i\omega}c=\i\omega(-\frac1c+o(1))$ and therefore $\Im(\tilde\kappa(\i\omega)-\frac{\i\omega}c)\le-\frac\omega{2c}$ for all $\omega\ge\omega_0$ for a sufficiently large $\omega_0$. Thus, by \autoref{thFiniteSpeedViaIm}, $p$ cannot have finite propagation speed for $A=0$.
\end{proof}


\section{Examples of Attenuation Models}
The following examples of attenuation coefficient have been collected in \cite{KowSch12}, where also references to original
papers can be found. In this section, we review them and catalog them into two groups which are characterised by different spectral
behaviour.

If the attenuation in the medium increases faster than some power of the frequency, we are in the case of strong attenuation.
\begin{definition}\label{deStrongAttenuation}
We call an attenuation coefficient $\kappa\in C^\infty(\R;\overline\H)$ a \emph{strong attenation coefficient} if it fulfils that
\begin{equation}\label{eqStrongAttenuation}
\Im\kappa(\omega)\ge\kappa_0|\omega|^\beta\quad\text{for all}\quad\omega\in\R\quad\text{with}\quad|\omega|\ge\omega_0
\end{equation}
for some constants $\kappa_0>0$, $\beta>0$, and $\omega_0\ge0$.
\end{definition}

A common example, which has the drawback of an infinite propagation speed, is the thermo-viscous model, see \autoref{tbThermoViscous}. In \cite{KowSchBon11}, the authors modified this model to obtain one with finite propagation speed, see \autoref{tbKowSchBon}. Other models, trying to match the heuristic power law behaviour of the attenuation are the power law in \autoref{tbPowerLaw} and Szabo's model, see \autoref{tbSza}, where we chose the modified version introduced in~\cite{KowSch12} as the original one does not lead to a causal model.

In these tables and in the following we always use the principal branch of the complex roots, that is, we define for $\gamma\in\C$
\[ (r\e^{\i\varphi})^\gamma = \e^{\gamma(\log(r)+\i\varphi)}\quad\text{for every}\quad r>0,\;\varphi\in(-\pi,\pi). \]

\begin{remark}
The attenuation coefficients in \autoref{tbKowSchBon}, \autoref{tbPowerLaw}, and \autoref{tbSza} do not fulfil the smoothness assumption $\kappa\in C^\infty(\R)$. However, this requirement originates mainly from our choice of solution concept for the attenuated wave equation \autoref{eqWaveEquationAttenuation} and we may still consider formula \autoref{eqSolAttWaveEqSimplified} as definition of the solution~$p$ of \autoref{eqWaveEquationAttenuation} if $\kappa$ is non-smooth. In particular, the smoothness assumption is not required for the derivation of the decay of the singular values of the integrated photoacoustic operator.
\end{remark}

\renewcommand{\floatpagefraction}{0.8}
\def\arraystretch{2}

\begin{table}[tb]
\begin{center}
\begin{tabular}{|l|p{11cm}|}
\hline
Name: & Thermo-viscous model, see for example \cite[Chapter 8.2]{KinFreCopSan00} \\ \hline
Attenuation coefficient: & $\kappa:\R\to\C$, $\displaystyle\kappa(\omega) = \frac{\omega}{\sqrt{1 - \i\tau\omega }}$ \\
Parameters: & $\tau>0$ \\ \hline
Holomorpic extension: & $\tilde\kappa:\overline\H\to\C$, $\displaystyle\tilde\kappa(z) = \frac{z}{\sqrt{1 - \i \tau z}}$ \\
Upper bound: & $|\tilde\kappa(z)| \le |z|$ for all $z\in\overline\H$\newline\mbox{}\newline
	\footnotesize This follows from $|1-\i\tau z|\ge\Re(1-\i\tau z)\ge1$ for $z\in\overline\H$. \\ \hline
Propagation speed: & $\displaystyle c=\lim_{\omega\to\infty}\frac{\i\omega}{\tilde\kappa(\i\omega)}=\lim_{\omega\to\infty}\sqrt{1+\tau\omega}=\infty$ \\ \hline
Attenuation type: & Strong attenuation coefficient\newline\mbox{}\newline
\footnotesize
Indeed a Taylor expansion with respect to $\frac1\omega$ around $0$ yields for $\omega\to\infty$:
\[ \begin{aligned}
\Im\kappa(\omega)&=\Im\sqrt{\frac{\i\omega}\tau}\left(1+\frac\i{\tau\omega}\right)^{-\frac12} \\
&= \Im\sqrt{\frac{\i\omega}\tau}\left(1+\Ord(\omega^{-1})\right) =\sqrt{\frac\omega{2\tau}}+\Ord(\omega^{-1}).
\end{aligned} \]
\\ \hline
Range of $\tilde\kappa$: &
\raisebox{-0.9\height}{
\begin{tikzpicture}[font=\tiny]
\node[inner sep=0pt,anchor=south] at (0,0) {\includegraphics[width=80pt]{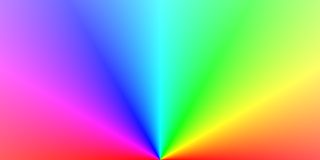}};
\draw[->](-45pt,0pt) -- (45pt,0pt);
\draw[->](0pt,-45pt) -- (0pt,45pt);
\draw (-40pt,-40pt) rectangle (40pt,40pt);
\draw(20pt,2pt) -- (20pt,-2pt) node[below] {$1$};
\draw(2pt,20pt) -- (-2pt,20pt) node[left] {$\i$};
\draw(-20pt,2pt) -- (-20pt,-2pt) node[below] {$-1$};
\draw(2pt,-20pt) -- (-2pt,-20pt) node[left] {$-\i$};

\draw[->,thick] (60pt,0) -- (80pt,0) node[pos=0.5,above] {$\tilde\kappa$};

\begin{scope}[shift={(140pt,0pt)}]
\node[inner sep=0pt,anchor=center] at (0,0) {\includegraphics[width=80pt]{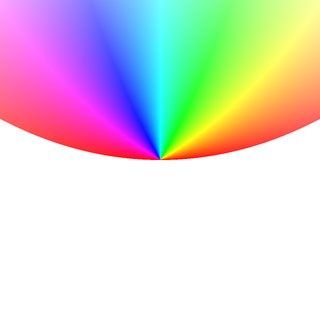}};
\draw[->](-45pt,0pt) -- (45pt,0pt);
\draw[->](0pt,-45pt) -- (0pt,45pt);
\draw (-40pt,-40pt) rectangle (40pt,40pt);
\draw(20pt,2pt) -- (20pt,-2pt) node[below] {$1$};
\draw(2pt,20pt) -- (-2pt,20pt) node[left] {$\i$};
\draw(-20pt,2pt) -- (-20pt,-2pt) node[below] {$-1$};
\draw(2pt,-20pt) -- (-2pt,-20pt) node[left] {$-\i$};
\end{scope}
\end{tikzpicture}}\newline\mbox{}\newline
\footnotesize
To see analytically that $\tilde\kappa$ maps the upper half plane $\overline\H$ into itself, we first remark that because of the symmetry $\tilde\kappa(-\bar z)=-\overline{\tilde\kappa(z)}$, it is enough to show that the first quadrant $Q_{++}=\{z\in\C\mid\Re(z)\ge0,\;\Im(z)\ge0\}$ is mapped under $\tilde\kappa$ into $\overline\H$.\newline
Since $f:\bar\C\to\bar\C$, $f(z)=\frac1{1-\i\tau z}$ is a Möbius transform which maps $Q_{++}$ to the half ball $\bar B_{\frac12}(\frac12)\cap Q_{++}$ and $\tilde\kappa$ is the composition $\tilde\kappa(z)=z\sqrt{f(z)}$, we indeed have $\tilde\kappa(Q_{++})\subset\overline\H$. \\ \hline
\end{tabular}
\end{center}
\caption{The thermo-viscous model.}
\label{tbThermoViscous}
\end{table}

\begin{table}[!ht]
\begin{center}
\begin{tabular}{|l|p{11cm}|} \hline
Model: & Kowar--Scherzer--Bonnefond model, see \cite{KowSchBon11} \\ \hline
Attenuation coefficient: & $\kappa:\R\to\C$, $\displaystyle\kappa(\omega) = \omega\left(1+\frac{\alpha}{\sqrt{1+(-\i\tau\omega)^\gamma}}\right)$ \\
Parameters: & $\gamma\in(0,1)$, $\alpha>0$, $\tau>0$ \\\hline
Holomorphic extension: & $\tilde\kappa:\overline\H\to\C$, $\displaystyle\tilde\kappa(z) = z\left(1+\frac{\alpha}{\sqrt{1+(-\i\tau z)^\gamma}}\right)$ \\
Upper bound: & $\displaystyle|\tilde\kappa(z)| \le (1+\alpha)|z|$ \newline\mbox{}\newline
	\footnotesize This follows from $|1+(-\i\tau z)^\gamma|\ge|\Re(1+(-\i\tau z)^\gamma)|\ge1$ for $z\in\overline\H$. \\ \hline
Propagation speed: & $\displaystyle c=\lim_{\omega\to\infty}\frac{\i\omega}{\tilde\kappa(\i\omega)}=\lim_{\omega\to\infty}\frac1{1+\frac\alpha{\sqrt{1+(\tau\omega)^\gamma}}}=1$ \\ \hline
Attenuation type: & Strong attenuation coefficient\newline\mbox{}\newline
	\footnotesize A Taylor expansion with respect to $\omega^{-\gamma}$ around $0$ yields for $\omega\to\infty$:
	\[ \begin{aligned}
	\Im\kappa(\omega) &= \alpha\omega\Im\big((-\i\tau\omega)^{-\frac\gamma2}(1+(-\i\tau\omega)^{-\gamma})^{-\frac12}\big) \\
	&= \alpha\omega\Im\big((-\i\tau\omega)^{-\frac\gamma2}+\Ord(\omega^{-\frac32\gamma})\big) \\
	&= \alpha\tau^{-\frac\gamma2}\sin(\tfrac{\pi\gamma}4)\omega^{1-\frac\gamma2}\big(1+\Ord(\omega^{-\gamma})\big).
	\end{aligned} \]
	\\ \hline
Range of $\tilde\kappa$: &
\raisebox{-0.9\height}{
\begin{tikzpicture}[font=\tiny]
\node[inner sep=0pt,anchor=south] at (0,0) {\includegraphics[width=80pt]{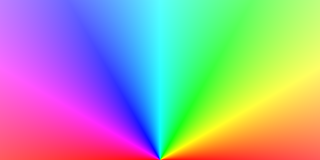}};
\draw[->](-45pt,0pt) -- (45pt,0pt);
\draw[->](0pt,-45pt) -- (0pt,45pt);
\draw (-40pt,-40pt) rectangle (40pt,40pt);
\draw(20pt,2pt) -- (20pt,-2pt) node[below] {$1$};
\draw(2pt,20pt) -- (-2pt,20pt) node[left] {$\i$};
\draw(-20pt,2pt) -- (-20pt,-2pt) node[below] {$-1$};
\draw(2pt,-20pt) -- (-2pt,-20pt) node[left] {$-\i$};

\draw[->,thick] (60pt,0) -- (80pt,0) node[pos=0.5,above] {$\tilde\kappa$};

\begin{scope}[shift={(140pt,0pt)}]
\node[inner sep=0pt,anchor=center] at (0,0) {\includegraphics[width=80pt]{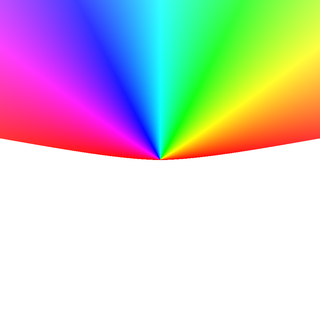}};
\draw[->](-45pt,0pt) -- (45pt,0pt);
\draw[->](0pt,-45pt) -- (0pt,45pt);
\draw (-40pt,-40pt) rectangle (40pt,40pt);
\draw(20pt,2pt) -- (20pt,-2pt) node[below] {$1$};
\draw(2pt,20pt) -- (-2pt,20pt) node[left] {$\i$};
\draw(-20pt,2pt) -- (-20pt,-2pt) node[below] {$-1$};
\draw(2pt,-20pt) -- (-2pt,-20pt) node[left] {$-\i$};
\end{scope}
\end{tikzpicture}}\newline\mbox{}\newline
\footnotesize To see where $\tilde\kappa$ maps the upper half plane, we write $\tilde\kappa$ in the form
\[ \tilde\kappa(z)=z(1+\alpha\sqrt{f_2(f_1(z))})\quad\text{with}\quad f_2(z)=\frac1{1+z},\;f_1(z)=(-\i\tau z)^\gamma. \]
Now, $f_1$ maps the first quadrant $Q_{++}$ in a subset of the fourth quadrant $Q_{+-}=\{z\in\C\mid\Re(z)\ge0,\Im(z)\le0\}$. And $f_2$ is a Möbius transform which maps $Q_{+-}$ to the half ball $\bar B_{\frac12}(\frac12)\cap Q_{++}$.

Thus, since the product of two points in the first quadrant $Q_{++}$ is in the upper half plane, $\tilde\kappa(Q_{++})\subset\overline\H$ and because of the symmetry $\tilde\kappa(-\bar z)=-\overline{\tilde\kappa(z)}$, we therefore have $\tilde\kappa(\overline\H)\subset\overline\H$.
\\\hline
\end{tabular}
\end{center}
\caption{The Kowar--Scherzer--Bonnefond model.}
\label{tbKowSchBon}
\end{table}

\begin{table}[!ht]
\begin{center}
\begin{tabular}{|l|p{11cm}|} \hline
Model: & Power law, see for example \cite{Sza94} \\ \hline
Attenuation coefficient: & $\kappa:\R\to\C$, $\kappa(\omega) = \omega+\i\alpha(-\i\omega)^\gamma$ \\
Parameters: & $\gamma\in(0,1)$, $\alpha>0$ \\\hline
Holomorphic extension: & $\tilde\kappa:\overline\H\to\C$, $\tilde\kappa(z) = z+\i\alpha(-\i z)^\gamma$ \\
Upper bound: & $|\tilde\kappa(z)| \le |z|+\alpha|z|^\gamma \le \alpha(1-\gamma)+(1+\alpha\gamma)|z|$ \newline\mbox{}\newline
	\footnotesize The second inequality uses Young's inequality to estimate $|z|^\gamma\le\gamma|z|+1-\gamma$. \\ \hline
Propagation speed: & $\displaystyle c=\lim_{\omega\to\infty}\frac{\i\omega}{\tilde\kappa(\i\omega)}=\lim_{\omega\to\infty}\frac1{1+\alpha\omega^{\gamma-1}}=1$ \\ \hline
Attenuation type: & Strong attenuation coefficient\newline\mbox{}\newline
	\footnotesize We have $\Im\kappa(\omega) = \alpha\sin\left((1-\gamma)\tfrac\pi2\right)|\omega|^\gamma$.
	\\ \hline
Range of $\tilde\kappa$: &
\raisebox{-0.9\height}{
\begin{tikzpicture}[font=\tiny]
\node[inner sep=0pt,anchor=south] at (0,0) {\includegraphics[width=80pt]{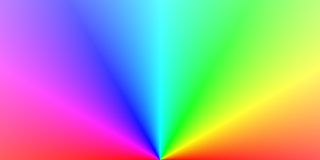}};
\draw[->](-45pt,0pt) -- (45pt,0pt);
\draw[->](0pt,-45pt) -- (0pt,45pt);
\draw (-40pt,-40pt) rectangle (40pt,40pt);
\draw(20pt,2pt) -- (20pt,-2pt) node[below] {$1$};
\draw(2pt,20pt) -- (-2pt,20pt) node[left] {$\i$};
\draw(-20pt,2pt) -- (-20pt,-2pt) node[below] {$-1$};
\draw(2pt,-20pt) -- (-2pt,-20pt) node[left] {$-\i$};

\draw[->,thick] (60pt,0) -- (80pt,0) node[pos=0.5,above] {$\tilde\kappa$};

\begin{scope}[shift={(140pt,0pt)}]
\node[inner sep=0pt,anchor=center] at (0,0) {\includegraphics[width=80pt]{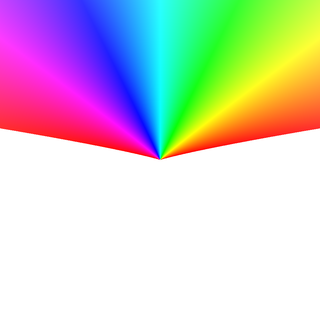}};
\draw[->](-45pt,0pt) -- (45pt,0pt);
\draw[->](0pt,-45pt) -- (0pt,45pt);
\draw (-40pt,-40pt) rectangle (40pt,40pt);
\draw(20pt,2pt) -- (20pt,-2pt) node[below] {$1$};
\draw(2pt,20pt) -- (-2pt,20pt) node[left] {$\i$};
\draw(-20pt,2pt) -- (-20pt,-2pt) node[below] {$-1$};
\draw(2pt,-20pt) -- (-2pt,-20pt) node[left] {$-\i$};
\end{scope}
\end{tikzpicture}}\newline\mbox{}\newline
\footnotesize That the range of $\tilde\kappa$ is a subset of $\overline\H$ follows immediately from
\[ \Im\tilde\kappa(r\e^{\i\varphi}) = r\sin\varphi+\alpha r^\gamma\sin\left(\frac\pi2+\left(\varphi-\frac\pi2\right)\gamma\right)\ge0 \]
for all $r\ge0$ and $\varphi\in[0,\pi]$. \\\hline
\end{tabular}
\end{center}
\caption{The power law model.}
\label{tbPowerLaw}
\end{table}

\begin{table}[!ht]
\begin{center}
\begin{tabular}{|l|p{11cm}|} \hline
Model: & Modified Szabo model, see \cite{KowSch12} and, for the original version, \cite{Sza94} \\ \hline
Attenuation coefficient: & $\kappa:\R\to\C$, $\displaystyle\kappa(\omega) = \omega\sqrt{1+\alpha(-\i\omega)^{\gamma-1}}$ \\
Parameters: & $\gamma\in(0,1)$, $\alpha>0$ \\\hline
Holomorphic extension: & $\tilde\kappa:\overline\H\to\C$, $\displaystyle\tilde\kappa(z) = z\sqrt{1+\alpha(-\i z)^{\gamma-1}}$ \\
Upper bound: & $|\tilde\kappa(z)| \le |z|+\sqrt\alpha|z|^{\frac{\gamma+1}2} \le \frac12\alpha(1-\gamma)+(1+\frac\alpha2(1+\gamma))|z|$ \newline\mbox{}\newline
	\footnotesize The second inequality uses Young's inequality to estimate $|z|^{\frac{1+\gamma}2}\le\frac12(1+\gamma)|z|+\frac12(1-\gamma)$. \\ \hline
Propagation speed: & $\displaystyle c=\lim_{\omega\to\infty}\frac{\i\omega}{\tilde\kappa(\i\omega)}=\lim_{\omega\to\infty}\frac1{\sqrt{1+\alpha\omega^{\gamma-1}}}=1$ \\ \hline
Attenuation type: & Strong attenuation coefficient\newline\mbox{}\newline
	\footnotesize A Taylor expansion with respect to $\omega^{\gamma-1}$ around $0$ yields for $\omega\to\infty$:
	\[ \Im\kappa(\omega) = \Im\left(\omega\left(1+2\alpha(-\i\omega)^{\gamma-1}\right)^{\frac12}\right) = \omega+\Ord(\omega^\gamma). \]
	\\ \hline
Range of $\tilde\kappa$: &
\raisebox{-0.9\height}{
\begin{tikzpicture}[font=\tiny]
\node[inner sep=0pt,anchor=south] at (0,0) {\includegraphics[width=80pt]{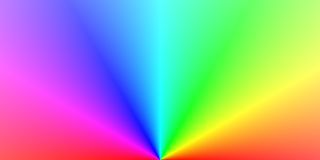}};
\draw[->](-45pt,0pt) -- (45pt,0pt);
\draw[->](0pt,-45pt) -- (0pt,45pt);
\draw (-40pt,-40pt) rectangle (40pt,40pt);
\draw(20pt,2pt) -- (20pt,-2pt) node[below] {$1$};
\draw(2pt,20pt) -- (-2pt,20pt) node[left] {$\i$};
\draw(-20pt,2pt) -- (-20pt,-2pt) node[below] {$-1$};
\draw(2pt,-20pt) -- (-2pt,-20pt) node[left] {$-\i$};

\draw[->,thick] (60pt,0) -- (80pt,0) node[pos=0.5,above] {$\tilde\kappa$};

\begin{scope}[shift={(140pt,0pt)}]
\node[inner sep=0pt,anchor=center] at (0,0) {\includegraphics[width=80pt]{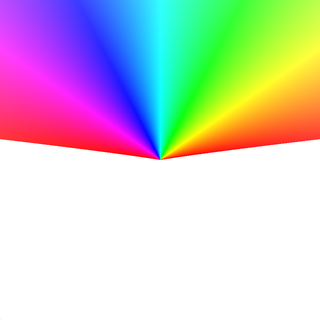}};
\draw[->](-45pt,0pt) -- (45pt,0pt);
\draw[->](0pt,-45pt) -- (0pt,45pt);
\draw (-40pt,-40pt) rectangle (40pt,40pt);
\draw(20pt,2pt) -- (20pt,-2pt) node[below] {$1$};
\draw(2pt,20pt) -- (-2pt,20pt) node[left] {$\i$};
\draw(-20pt,2pt) -- (-20pt,-2pt) node[below] {$-1$};
\draw(2pt,-20pt) -- (-2pt,-20pt) node[left] {$-\i$};
\end{scope}
\end{tikzpicture}}\newline\mbox{}\newline
\footnotesize To determine the range, we write $\tilde\kappa$ in the form
\[ \tilde\kappa(z) = z\sqrt{1+f(z)}\quad\text{with}\quad f(z)=\alpha(-\i z)^{\gamma-1}. \]
Now, since $\gamma-1<0$, $f$ maps the first quadrant $Q_{++}$ to a subset of $Q_{++}$. Thus, since the product of two points in the first quadrant is in the upper half plane, we have $\tilde\kappa(Q_{++})\subset\overline\H$ and because of the symmetry $\tilde\kappa(-\bar z)=-\overline{\tilde\kappa(z)}$ therefore $\tilde\kappa(\overline\H)\subset\overline\H$. \\\hline
\end{tabular}
\end{center}
\caption{The modified Szabo model.}
\label{tbSza}
\end{table}

In the case where the attenuation decreases sufficiently fast as the frequency increases, we call the medium weakly attenuating.
\begin{definition}\label{deWeakAttenuation}
We call an attenuation coefficient $\kappa\in C^\infty(\R;\overline\H)$ a \emph{weak attenuation coefficient} if it is of the form
\[ \kappa(\omega) = \frac\omega c+\i\kappa_\infty+\kappa_*(\omega),\quad\omega\in\R, \]
for some constants $c>0$ and $\kappa_\infty\ge0$ and a bounded function $\kappa_*\in C^\infty(\R)\cap L^2(\R)$.
\end{definition}

Clearly, the non-attenuating case where $\kappa(\omega)=\frac\omega c$ with $c>0$, so that the attenuated wave equation \autoref{eqWaveEquationAttenuation} reduces to the linear wave equation, falls under this category and we will later on treat the case of a weak attenuation coefficient as a perturbation of this non-attenuated case.

A non-trivial example of a weak attenuation model is the model by Nachman, Smith, and Waag, see \autoref{tbNacSmiWaa}.

\begin{table}[!ht]
\begin{center}
\begin{tabular}{|l|p{11cm}|}
\hline
Name: & Nachman--Smith--Waag model, see \cite{NacSmiWaa90} \\ \hline
Attenuation coefficient: & $\kappa:\R\to\C$, $\displaystyle\kappa(\omega) = \frac\omega{c_0}\sqrt{\frac{1-\i\tilde\tau\omega}{1-\i\tau\omega}}$ \\
Parameters: & $c_0>0$, $\tau>0$, $\tilde\tau\in(0,\tau)$ \\ \hline
Holomorpic extension: & $\tilde\kappa:\overline\H\to\C$, $\displaystyle\frac z{c_0}\sqrt{\frac{1-\i\tilde\tau z}{1-\i\tau z}}$ \\
Upper bound: & $|\tilde\kappa(z)| \le \frac1{c_0}|z|$ for all $z\in\overline\H$\newline\mbox{}\newline
	\footnotesize This follows from $|1-\i\tilde\tau z|\le|1-\i\tau z|$ for $z\in\overline\H$. \\ \hline
Propagation speed: & $\displaystyle c=\lim_{\omega\to\infty}\frac{\i\omega}{\tilde\kappa(\i\omega)}=\lim_{\omega\to\infty}c_0\sqrt{\frac{1+\tau\omega}{1+\tilde\tau\omega}}=c_0\sqrt{\frac\tau{\tilde\tau}}$ \\ \hline
Attenuation Type: & Weak attenuation coefficient \newline\mbox{}\newline
\footnotesize
Indeed a Taylor expansion with respect to $\frac1\omega$ around $0$ yields for $\omega\to\infty$:
\[ \begin{aligned}
\kappa(\omega)
&=\frac\omega{c_0}\sqrt{\frac{\tilde\tau}\tau}\left(1+\frac\i{\tilde\tau\omega}\right)^{\frac12}\left(1+\frac\i{\tau\omega}\right)^{-\frac12} \\
&= \frac\omega{c_0}\sqrt{\frac{\tilde\tau}\tau}\left(1+\frac\i{2\tilde\tau\omega}+\Ord(\omega^{-2})\right)\left(1-\frac\i{2\tau\omega}+\Ord(\omega^{-2})\right) \\
&= \frac\omega{c_0}\sqrt{\frac{\tilde\tau}\tau}+\frac{\i(\tau-\tilde\tau)}{2c_0\tau\sqrt{\tilde\tau\tau}}+\Ord(\omega^{-1}).
\end{aligned} \]
\\ \hline
Range of $\tilde\kappa$: &
\raisebox{-0.9\height}{
\begin{tikzpicture}[font=\tiny]
\node[inner sep=0pt,anchor=south] at (0,0) {\includegraphics[width=80pt]{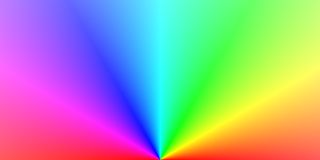}};
\draw[->](-45pt,0pt) -- (45pt,0pt);
\draw[->](0pt,-45pt) -- (0pt,45pt);
\draw (-40pt,-40pt) rectangle (40pt,40pt);
\draw(20pt,2pt) -- (20pt,-2pt) node[below] {$1$};
\draw(2pt,20pt) -- (-2pt,20pt) node[left] {$\i$};
\draw(-20pt,2pt) -- (-20pt,-2pt) node[below] {$-1$};
\draw(2pt,-20pt) -- (-2pt,-20pt) node[left] {$-\i$};

\draw[->,thick] (60pt,0) -- (80pt,0) node[pos=0.5,above] {$\tilde\kappa$};

\begin{scope}[shift={(140pt,0pt)}]
\node[inner sep=0pt,anchor=center] at (0,0) {\includegraphics[width=80pt]{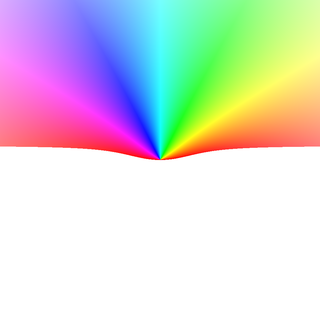}};
\draw[->](-45pt,0pt) -- (45pt,0pt);
\draw[->](0pt,-45pt) -- (0pt,45pt);
\draw (-40pt,-40pt) rectangle (40pt,40pt);
\draw(20pt,2pt) -- (20pt,-2pt) node[below] {$1$};
\draw(2pt,20pt) -- (-2pt,20pt) node[left] {$\i$};
\draw(-20pt,2pt) -- (-20pt,-2pt) node[below] {$-1$};
\draw(2pt,-20pt) -- (-2pt,-20pt) node[left] {$-\i$};
\end{scope}
\end{tikzpicture}}\newline\mbox{}\newline
\footnotesize The Möbius transform $f:\bar\C\to\bar\C$, $f(z)=\frac{1-\i\tilde\tau z}{1-\i\tau z}$ maps the first quadrant $Q_{++}$ to $\bar B_r(a)\cap Q_{++}$ where $r=\frac12(1-\frac{\tilde\tau}\tau)$ and $a=\frac12(1+\frac{\tilde\tau}\tau)$. Thus, $\tilde\kappa(z)=\frac z{c_0}\sqrt{f(z)}\in\overline\H$ for $z\in Q_{++}$ and the symmetry $\tilde\kappa(-\bar z)=-\overline{\tilde\kappa(z)}$ then implies that $\tilde\kappa(z)\in\overline\H$ for every $z\in\overline\H$. \\\hline
\end{tabular}
\end{center}
\caption{The Nachman--Smith--Waag model.}
\label{tbNacSmiWaa}
\end{table}


\section{The Integrated Photoacoustic Operator}
Let us now return to the attenuated photoacoustic imaging problem. Thus, we consider the operator mapping the source term $h$ in the attenuated wave equation \autoref{eqWaveEquationAttenuation} (interpreted in the sense of \autoref{eqSolAttWaveEq}) to the measurements, which shall correspond to the solution of the attenuated wave equation on the measurement surface $\partial\Omega$ measured for all time.

According to \autoref{thSolAttWave}, the solution $p$ of the attenuated wave equation is given by \autoref{eqSolAttWave}. This means that the temporal inverse Fourier transform of $p$ is the regular distribution corresponding to the function
\begin{equation}\label{eqSolAttWaveEqSimplified}
\check p(\omega,x) = \int_{\R^3}G_\kappa(\omega,x-y)h(y)\d y =-\frac{\i \omega}{4 \pi \sqrt{2 \pi}} \int_{\R^3}\frac{\e^{\i \kappa(\omega)|x-y|}}{|x-y|}h(y) \d y,\qquad\omega\in\R,\; x\in\R^3,
\end{equation}
where $G_\kappa$ is defined by \autoref{eqSolAttWaveKernel}.

We therefore introduce our measurements $\check m$ as the function
\[ \check m(\omega,\xi) = \check p(\omega,\xi)\quad\text{for all}\quad \omega\in\R,\;\xi\in\partial\Omega. \]

Instead of considering the operator mapping $h$ to $\check m$, we will divide the data by $-\i\omega$, meaning that we consider the map from $h$ to the inverse Fourier transform of the measurements which were integrated over time.
Additionally, we want to assume that the measurements are performed outside the support of the source.
\begin{remark}
This assumption that the absorption density functions $h$ has compact support in the domain $\Omega$ is very common in the theory of photoacoustics, see for instance \cite{AgrKucQui07,KucKun08}.
\end{remark}

\begin{definition}\label{deAttSphMean}
Let $\Omega\subset\R^3$ be a bounded Lipschitz domain and $\kappa$ be either a strong or a weak attenuation coefficient. For $\varepsilon>0$, we define $\Omega_\varepsilon=\{x\in\Omega\mid\dist(x,\partial\Omega)>\varepsilon\}$.

Then, we call
\begin{equation}\label{eqAttSphMean}
\PO:L^2(\Omega_\varepsilon)\to L^2(\R\times\partial\Omega),\quad \PO h(\omega,\xi) = \frac1{4\pi\sqrt{2\pi}}\int_{\Omega_\varepsilon}\frac{\e^{\i\kappa(\omega)|\xi-y|}}{|\xi-y|}h(y)\d y
\end{equation}
the \emph{integrated photoacoustic operator} of the attenuation coefficient $\kappa$ in frequency domain.
\end{definition}

\begin{lemma}\label{thPOBdd}
Let $\Omega\subset\R^3$ be a bounded Lipschitz domain and $\varepsilon>0$. Then, the integrated photoacoustic operator $\PO:L^2(\Omega_\varepsilon)\to L^2(\R\times\partial\Omega)$ of an attenuation coefficient $\kappa$ is a bounded linear operator and its adjoint is given by
\begin{equation}\label{eqPOadj}
\PO^*:L^2(\R\times\partial\Omega)\to L^2(\Omega_\varepsilon),\quad\PO^*\check m(y) = \frac1{4\pi\sqrt{2\pi}}\int_{-\infty}^\infty\int_{\partial\Omega} \frac{\e^{-\i\overline{\kappa(\omega)}|\xi-y|}}{|\xi-y|}\check m(\omega,\xi)\d S(\xi)\d\omega.
\end{equation}
\end{lemma}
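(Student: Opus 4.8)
The plan is to prove first that $\PO$ is bounded (so that a Hilbert-space adjoint exists) and then to identify the adjoint by interchanging integrals. For the boundedness I would pass to spherical means: writing $M_h(\xi,r)=\frac1{4\pi}\int_{S^2}h(\xi+r\theta)\d\theta$ for the unit sphere $S^2$ and integrating the defining integral \autoref{eqAttSphMean} over the spheres $|\xi-y|=r$, one obtains
\[ \PO h(\omega,\xi)=\frac1{\sqrt{2\pi}}\int_0^\infty\e^{\i\kappa(\omega)r}g_\xi(r)\d r,\qquad g_\xi(r)=r\,M_h(\xi,r). \]
Since $h$ is supported in $\Omega_\varepsilon$ and $\xi\in\partial\Omega$, the function $g_\xi$ is supported in $[\varepsilon,D]$ with $D=\sup_{y\in\Omega_\varepsilon,\,\xi\in\partial\Omega}|\xi-y|$, and Cauchy--Schwarz for the spherical average gives the uniform bound $\|g_\xi\|_{L^2(0,\infty)}^2\le\frac1{4\pi}\|h\|_{L^2(\Omega_\varepsilon)}^2$.

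I would then split according to the two admissible classes. For a strong attenuation coefficient the operator should even be Hilbert--Schmidt: bounding $\frac{\e^{-\Im\kappa(\omega)|\xi-y|}}{|\xi-y|}\le\frac1\varepsilon$ and, for $|\omega|\ge\omega_0$, invoking $\Im\kappa(\omega)\ge\kappa_0|\omega|^\beta$ from \autoref{eqStrongAttenuation}, the triple integral of the squared kernel converges because $\int_\R\e^{-2\kappa_0\varepsilon|\omega|^\beta}\d\omega<\infty$, and finiteness of the Hilbert--Schmidt norm gives boundedness. For a weak attenuation coefficient $\kappa(\omega)=\frac\omega c+\i\kappa_\infty+\kappa_*(\omega)$ this estimate fails, since $\Im\kappa$ need not grow; here I would write $\PO=Q+(\PO-Q)$, where $Q$ is obtained by replacing the phase with $\frac\omega c+\i\kappa_\infty$. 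For each $\xi$ the function $\omega\mapsto Qh(\omega,\xi)$ is an inverse Fourier transform (in $r$, at frequency $\frac\omega c$) of $\e^{-\kappa_\infty r}g_\xi$, so Plancherel yields $\int_\R|Qh(\omega,\xi)|^2\d\omega\le c\|g_\xi\|_{L^2}^2$; the remainder has integrand $\e^{\i(\frac\omega c+\i\kappa_\infty)r}(\e^{\i\kappa_*(\omega)r}-1)g_\xi(r)$, and the elementary estimate $|\e^{\i\kappa_*(\omega)r}-1|\le C|\kappa_*(\omega)|$ on $[\varepsilon,D]$ together with $\kappa_*\in L^2(\R)$ bounds its $\omega$-integral by $C'\|\kappa_*\|_{L^2}^2\|g_\xi\|_{L^2}^2$. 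Integrating over the finite-measure set $\partial\Omega$ then gives boundedness in both cases.

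For the adjoint I would expand $\left<\PO h,\check m\right>_{L^2(\R\times\partial\Omega)}$, use $\overline{\e^{\i\kappa(\omega)|\xi-y|}}=\e^{-\i\overline{\kappa(\omega)}|\xi-y|}$, and move the $y$-integration to the outside, which produces exactly the formula \autoref{eqPOadj}. The interchange is legitimate by Fubini once the integrand is absolutely integrable, and the uniform bound $\frac{\e^{-\Im\kappa(\omega)|\xi-y|}}{|\xi-y|}\le\frac1\varepsilon$ secures this as soon as $\check m\in L^1\cap L^2$ (using $h\in L^2(\Omega_\varepsilon)\subset L^1(\Omega_\varepsilon)$); for such $\check m$ the right-hand side of \autoref{eqPOadj} is a genuine element of $L^2(\Omega_\varepsilon)$ and equals $\PO^*\check m$. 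Since $L^1\cap L^2$ is dense in $L^2(\R\times\partial\Omega)$ and $\PO^*$ is bounded, this determines $\PO^*$ on all of $L^2(\R\times\partial\Omega)$, giving \autoref{eqPOadj}.

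The step I expect to be the real obstacle is the boundedness in the weak case: there the kernel does not decay in $\omega$, so the Hilbert--Schmidt estimate diverges, and one must genuinely extract the gain from the dominant linear phase $\frac\omega c$ via Plancherel while absorbing the nonlinear part $\kappa_*$ as an $L^2$-perturbation. Keeping the radial support of $g_\xi$ bounded away from $0$ and from $\infty$ — which is exactly what the assumption that $h$ lives in $\Omega_\varepsilon$ provides — is what makes both the Plancherel isometry and the perturbation estimate applicable.
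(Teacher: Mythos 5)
Your proof is correct and follows essentially the same route as the paper: the same direct exponential-decay estimate in the strong case, and in the weak case the same splitting into a constant-attenuation operator (handled by the Fourier--Plancherel isometry, which is your radial-variable version of the paper's time-domain spherical-mean operator $\mathcal P_\kappa^{(0)}$) plus a perturbation controlled by $|\e^z-1|\le C|z|$ together with $\kappa_*\in L^2(\R)$. Your explicit Fubini-plus-density verification of the adjoint formula \autoref{eqPOadj} is a routine step that the paper's proof omits, but it does not change the substance of the argument.
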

\begin{proof}
\begin{enumerate}
\item[]
\item
We first consider the case of a strong attenuation coefficient. Then, we have for every $\omega\in\R$ and $\xi\in\partial\Omega$ the estimate
\[ |\PO h(\omega,\xi)|^2 \le \frac{|\Omega_\varepsilon|}{32\varepsilon^2\pi^3}\|h\|_2^2\e^{-2\varepsilon\Im\kappa(\omega)}. \]
Since, by \autoref{deStrongAttenuation}, $\Im\kappa(\omega)\ge \kappa_0|\omega|^\beta$ for all $\omega\in\R$ with $|\omega|\ge\omega_0$ for some sufficiently large~$\omega_0\ge0$, this shows that $\PO:L^2(\Omega_\varepsilon)\to L^2(\R\times\partial\Omega)$ is a bounded linear operator.
\item
In the case of a weak attenuation coefficient, see \autoref{deWeakAttenuation}, we split the operator into $\PO=\POa+\POb$, where we define
\begin{equation}\label{eqPOa}
\POa h(\omega,\xi) = \frac1{4\pi\sqrt{2\pi}}\int_{\Omega_\varepsilon}\frac{\e^{\i\frac\omega c|\xi-y|}}{|\xi-y|}\e^{-\kappa_\infty|\xi-y|}h(y)\d y
\end{equation}
as the photoacoustic operator with constant attenuation and the perturbation
\begin{equation}\label{eqPOb}
\POb h(\omega,\xi) = \frac1{4\pi\sqrt{2\pi}}\int_{\Omega_\varepsilon}\frac{\e^{\i\frac\omega c|\xi-y|}}{|\xi-y|}\e^{-\kappa_\infty|\xi-y|}(\e^{\i\kappa_*(\omega)|\xi-y|}-1)h(y)\d y.
\end{equation}

Now, $\POa h$ is seen to be the inverse Fourier transform of the function $\mathcal P_\kappa^{(0)}h$, defined by
\[ \mathcal P_\kappa^{(0)}h(t,\xi) = \frac{\e^{-\kappa_\infty ct}}{4\pi t}\int_{\Omega_\varepsilon\cap\partial B_{ct}(\xi)}h(z)\d S(z),\quad t>0,\;\xi\in\partial\Omega, \]
and $\mathcal P_\kappa^{(0)}h(t,\xi)=0$ for $t\le0$, $\xi\in\partial\Omega$.
Now, $\mathcal P_\kappa^{(0)}:L^2(\Omega_\varepsilon)\to L^2(\R\times\partial\Omega)$ can be directly seen to be a bounded linear operator, since we have, recalling that $\kappa_\infty\ge0$,
\begin{align*}
\|\mathcal P_\kappa^{(0)}h\|_2^2 &\le \int_{\partial\Omega}\int_0^\infty \frac{1}{16\pi^2t^2}
\left|\int_{\Omega_\varepsilon\cap\partial B_{ct(\xi)}} h(z)\d S(z) \right|^2 \d t \d S(\xi) \\
& \le \int_{\partial\Omega}\int_0^\infty \frac{c^2}{4\pi} \int_{\Omega_\varepsilon\cap\partial B_{ct(\xi)}} |h(z)|^2\d S(z) \d t \d S(\xi).
\end{align*}
Thus, combining the two inner integrals to an integral over $\Omega_\varepsilon$, we find that
\[ \|\mathcal P_\kappa^{(0)}h\|_2^2 \le \frac c{4\pi}|\partial\Omega|\|h\|_2^2. \]
This is a special case of the more general result in \cite[Lemma 4.1]{Pal10}.

Thus, $\mathcal P_\kappa^{(0)}:L^2(\Omega_\varepsilon)\to L^2(\R\times\partial\Omega)$ and therefore, because the Fourier transform on $L^2(\R)$ is an isometry, also $\POa:L^2(\Omega_\varepsilon)\to L^2(\R\times\partial\Omega)$ are bounded, linear operators.

For $\POb h$, we get the estimate
\begin{equation}\label{eqPobL2}
|\POb h(\omega,\xi)|^2 \le \frac{|\Omega_\varepsilon|}{32\varepsilon^2\pi^3}\|h\|_2^2\sup_{y\in\Omega_\varepsilon}|\e^{\i\kappa_*(\omega)|\xi-y|}-1|^2.
\end{equation}
We now remark that we can find for every bounded set $D\subset\C$ a constant $C>0$ such that
\begin{equation}\label{eqExponential}
|\e^z-1|\le C|z|\quad\text{for all}\quad z\in D.
\end{equation}
Therefore, since $\kappa_*$ is according to \autoref{deWeakAttenuation} bounded and $|\xi-y|$ remains bounded since $\Omega$ is bounded, we find a constant $\tilde C>0$ such that
\[ \sup_{y\in\Omega_\varepsilon}|\e^{\i\kappa_*(\omega)|\xi-y|}-1|^2 \le\tilde C|\kappa_*(\omega)|^2\quad\text{for all}\quad \omega\in\R,\;\xi\in\partial\Omega. \]
Since $\kappa_*$ is additionally square integrable by \autoref{deWeakAttenuation}, we find by inserting this into the estimate~\autoref{eqPobL2} that $\POb:L^2(\Omega_\varepsilon)\to L^2(\R\times\partial\Omega)$ is a bounded, linear operator, and therefore so is the operator $\PO:L^2(\Omega_\varepsilon)\to L^2(\R\times\partial\Omega)$.
\end{enumerate}
\end{proof}

To obtain the singular values of the operator $\PO$, we consider the operator $\PO^*\PO$ on $L^2(\Omega_\varepsilon)$. It turns out that this operator is a Hilbert--Schmidt integral operator, in particular therefore compact. So, by the singular theorem for compact operators, its spectrum consists of at most countably many positive eigenvalues and the value zero.
\begin{proposition}\label{thPO2compact}
Let $\PO:L^2(\Omega_\varepsilon)\to L^2(\R\times\partial\Omega)$ be the integrated photoacoustic operator of a weak or a strong attenuation coefficient $\kappa$ for some bounded, convex domain $\Omega\subset\R^3$ with smooth boundary and some $\varepsilon>0$. Then, $\PO^*\PO:L^2(\Omega_\varepsilon)\to L^2(\Omega_\varepsilon)$ is a self-adjoint integral operator with kernel $F\in L^2(\Omega_\varepsilon\times\Omega_\varepsilon)$ given by
\begin{equation}\label{eqPATOp2Kernel}
\POK(x,y)=\frac1{32\pi^3}\int_{-\infty}^\infty\int_{\partial\Omega}\frac{\e^{\i\kappa(\omega)|\xi-y|-\i\overline{\kappa(\omega)}|\xi-x|}}{|\xi-y||\xi-x|}\d S(\xi)\d\omega,
\end{equation}
that is
\begin{equation}\label{eqPATOp2}
\PO^*\PO h(x)=\int_{\Omega_\varepsilon}\POK(x,y)h(y)\d y.
\end{equation}

In particular, $\PO^*\PO$ is a Hilbert--Schmidt operator and thus compact.
\end{proposition}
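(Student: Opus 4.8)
The plan is to obtain $\PO^*\PO$ by composing the explicit formulas \autoref{eqAttSphMean} for $\PO$ and \autoref{eqPOadj} for $\PO^*$ (applied to $\check m=\PO h$), and then to read off the kernel by pulling the $y$-integration to the outside. Formally this gives
\[ \PO^*\PO h(x) = \frac1{32\pi^3}\int_{-\infty}^\infty\int_{\partial\Omega}\frac{\e^{-\i\overline{\kappa(\omega)}|\xi-x|}}{|\xi-x|}\int_{\Omega_\varepsilon}\frac{\e^{\i\kappa(\omega)|\xi-y|}}{|\xi-y|}h(y)\d y\,\d S(\xi)\d\omega, \]
and interchanging $\int_{\Omega_\varepsilon}\d y$ with the remaining integrations yields the kernel $\POK$ of \autoref{eqPATOp2Kernel}. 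Self-adjointness is then automatic for a normal operator, and is also visible from the Hermitian symmetry $\overline{\POK(y,x)}=\POK(x,y)$. The crux is therefore to justify the interchange and to show $\POK\in L^2(\Omega_\varepsilon\times\Omega_\varepsilon)$, which simultaneously gives the Hilbert--Schmidt property and hence compactness. Throughout I would use that $x,y\in\Omega_\varepsilon$ and $\xi\in\partial\Omega$ force $|\xi-x|,|\xi-y|\ge\varepsilon$, so the denominators are harmless and all the difficulty sits in the $\omega$-integration.

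For a \emph{strong} attenuation coefficient the argument is direct. Since $\bigl|\e^{\i\kappa(\omega)|\xi-y|-\i\overline{\kappa(\omega)}|\xi-x|}\bigr|=\e^{-\Im\kappa(\omega)(|\xi-x|+|\xi-y|)}\le\e^{-2\varepsilon\Im\kappa(\omega)}$ and $\Im\kappa(\omega)\ge\kappa_0|\omega|^\beta$ for $|\omega|\ge\omega_0$ by \autoref{deStrongAttenuation}, the triple integrand is dominated by $\tfrac1{\varepsilon^2}|h(y)|\,\e^{-2\varepsilon\Im\kappa(\omega)}$, which is absolutely integrable over $\R\times\partial\Omega\times\Omega_\varepsilon$. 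Fubini then validates the interchange, and the same bound (independent of $x,y$) shows $\POK$ to be bounded, indeed smooth by differentiating under the integral sign, hence in $L^2(\Omega_\varepsilon\times\Omega_\varepsilon)$ because $\Omega_\varepsilon$ is bounded.

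For a \emph{weak} attenuation coefficient the naive $\omega$-integral diverges, since $\Im\kappa$ stays bounded, so I would reuse the splitting $\PO=\POa+\POb$ from the proof of \autoref{thPOBdd} and expand $\PO^*\PO=(\POa)^*\POa+(\POa)^*\POb+(\POb)^*\POa+(\POb)^*\POb$. The perturbation $\POb$ is in fact itself Hilbert--Schmidt: by \autoref{eqExponential} its integral kernel is bounded pointwise by $C|\kappa_*(\omega)|$ (the factor $\tfrac1{|\xi-y|}$ cancelling against the $|\xi-y|$ produced by \autoref{eqExponential}), which, since $\kappa_*\in L^2(\R)$ by \autoref{deWeakAttenuation} and $\partial\Omega$ is bounded, lies in $L^2(\R\times\partial\Omega\times\Omega_\varepsilon)$. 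Consequently each of the three terms carrying a factor $\POb$ is a composition of the Hilbert--Schmidt operator $\POb$ (or $(\POb)^*$) with the bounded operator $\POa$ (or $(\POa)^*$), and is therefore Hilbert--Schmidt. The only genuinely new term is the normal operator $(\POa)^*\POa$ of the constant-attenuation part.

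For $(\POa)^*\POa$ I would exploit the spherical-mean structure: $\POa$ is the temporal inverse Fourier transform of the exponentially weighted spherical mean operator $\mathcal P_\kappa^{(0)}$ appearing in the proof of \autoref{thPOBdd}, so Plancherel gives $(\POa)^*\POa=(\mathcal P_\kappa^{(0)})^*\mathcal P_\kappa^{(0)}$. Equivalently, performing the frequency integration first collapses the oscillatory factor to a $\delta$-distribution, $\int_{-\infty}^\infty\e^{\i\frac\omega c(|\xi-y|-|\xi-x|)}\d\omega=2\pi c\,\delta(|\xi-x|-|\xi-y|)$, so the surface integral localises to the curve $\{\xi\in\partial\Omega\mid|\xi-x|=|\xi-y|\}$ cut out by the perpendicular bisector of $x$ and $y$; the smoothness and convexity of $\partial\Omega$ are used to control this restriction and its Jacobian. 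The resulting function kernel is square-integrable on $\Omega_\varepsilon\times\Omega_\varepsilon$ — this is precisely that the normal operator of the weighted spherical mean transform is Hilbert--Schmidt, the content of \cite[Lemma 4.1]{Pal10}. I expect making this $\delta$-collapse rigorous and bounding the resulting curve integral in $L^2$ to be the main obstacle, the strong case and the $\POb$-terms being routine. Summing the four pieces exhibits $\PO^*\PO$ as a sum of Hilbert--Schmidt operators, hence Hilbert--Schmidt and compact, and recombining their kernels recovers the expression \autoref{eqPATOp2Kernel}, understood in the weak case via the distributional frequency integration above.
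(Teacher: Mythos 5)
Your proposal is correct in substance and, for the strong attenuation case, identical to the paper's argument: the same uniform bound $\e^{-2\varepsilon\Im\kappa(\omega)}$ makes all integrals absolutely convergent and the kernel uniformly bounded. For the weak case you take a genuinely different route for the perturbation terms. The paper never decomposes at the operator level; it expands the kernel itself, $\POK=\POKa+\POKb+\POKc$, by Taylor-expanding the factor $\e^{\i\kappa_*(\omega)|\xi-y|-\i\overline{\kappa_*(\omega)}|\xi-x|}$ to first order plus remainder, and shows each kernel piece lies in $L^2(\Omega_\varepsilon\times\Omega_\varepsilon)$ separately: $\POKb$ by recognising its $\omega$-integral as $\check\kappa_*$ evaluated at $\tfrac1c(|\xi-y|-|\xi-x|)$ and integrating in spherical coordinates, $\POKc$ by the uniform bound $|\e^z-z-1|\le C|z|^2$. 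You instead factor $\PO=\POa+\POb$, observe that $\POb$ is itself Hilbert--Schmidt (your pointwise kernel bound $C|\kappa_*(\omega)|$ is correct: the $|\xi-y|$ produced by \autoref{eqExponential} cancels the denominator, and $\kappa_*\in L^2(\R)$), and dispatch the three terms containing $\POb$ via the two-sided ideal property of Hilbert--Schmidt operators. This is cleaner and in fact stronger than what the paper records at this stage (\autoref{thPOBdd} only establishes boundedness of $\POb$), and it meshes naturally with the later perturbation analysis of \autoref{thEvDif}. What the paper's kernel-level route buys is the explicit representation \autoref{eqPOKaExplicit} of $\POKa$ as a line integral over the bisector curve $\gamma_{x,y}$ together with the bound $|\POKa(x,y)|\le A/|x-y|$, which is reused in \autoref{thEvNonAtt}.

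Two caveats concerning the one term both arguments must face, namely $\POa^*\POa$. First, your fallback citation is misattributed: \cite[Lemma 4.1]{Pal10} is invoked in the paper for the $L^2$-boundedness of the spherical-mean operator $\mathcal P_\kappa^{(0)}$, not for the Hilbert--Schmidt property of its normal operator, so you cannot discharge the crux by that reference; Plancherel plus boundedness only gives that $\POa^*\POa=(\mathcal P_\kappa^{(0)})^*\mathcal P_\kappa^{(0)}$ is bounded, which is far from Hilbert--Schmidt. Second, your $\delta$-collapse of the frequency integral is the right heuristic, and it is exactly what the paper makes rigorous --- not literally through distributions, but by parametrising $\partial\Omega$ by planes through $\tfrac12(x+y)$, using convexity to show that the phase $g(\varphi,\cdot)$ is piecewise strictly monotone with zero set precisely the bisector plane, and applying the substitution/Fourier-inversion identity $\int\int\e^{\i\omega\lambda(z)}\rho(z)\d z\d\omega=2\pi\rho_\lambda(0)$. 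Carrying this out, including the Jacobian control that yields the $A/|x-y|$ bound and hence $\POKa\in L^2(\Omega_\varepsilon\times\Omega_\varepsilon)$, is the real content of the weak case; your sketch identifies it correctly as the main obstacle but leaves it to be done.
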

We remark that the convexity and smoothness assumptions on $\Omega$ are only needed for the weak attenuation case. For strong attenuation, a Lipschitz domain $\Omega$ is sufficient.
\begin{proof}
The representation \autoref{eqPATOp2Kernel} of the integral kernel $\POK$ of the operator $\PO^*\PO$ is directly obtained by combining the formulas \autoref{eqAttSphMean} and \autoref{eqPOadj} for $\PO$ and $\PO^*$. To prove that $F\in L^2(\Omega_\varepsilon\times\Omega_\varepsilon)$, we treat the two cases of strong and weak attenuation coefficients separately.
\begin{enumerate}
\item
For a strong attenuation coefficient, we estimate directly
\[ |\POK(x,y)|^2 \le \frac{|\partial\Omega|}{32\varepsilon^2\pi^3}\int_{-\infty}^\infty\e^{-2\varepsilon\Im\kappa(\omega)}\d\omega\quad\text{for all}\quad x,y\in\Omega_\varepsilon. \]
According to \autoref{deStrongAttenuation}, we have $\Im\kappa(\omega)\ge\kappa_0|\omega|^\beta$ for all $|\omega|\ge\omega_0$ for some $\omega_0\ge0$ and therefore, $|F_\kappa|^2$ is uniformly bounded. Thus, $F_\kappa\in L^2(\Omega_\varepsilon\times\Omega_\varepsilon)$, which implies that $\PO^*\PO$ is a Hilbert--Schmidt operator and compact, see for example \cite[Theorems 6.10 and 6.11]{Wei80}.
\item
In the case of a weak attenuation coefficient, we write $\POK$ similar to the proof of \autoref{thPOBdd} as the sum of a contribution $\POKa$ of a medium with constant attenuation and perturbations $\POKb$ and $\POKc$: $\POK=\POKa+\POKb+\POKc$ with
\begin{equation}\label{eqPOKsplit}
F_\kappa^{(j)}(x,y) = \frac1{32\pi^3}\int_{-\infty}^\infty\int_{\partial\Omega}\frac{\e^{\i\frac\omega c(|\xi-y|-|\xi-x|)}}{|\xi-y||\xi-x|}\e^{-\kappa_\infty(|\xi-y|+|\xi-x|)}f^{(j)}_\kappa(\omega,\xi,x,y)\d S(\xi)\d\omega,
\end{equation}
where
\begin{align*}
\POka(\omega,\xi,x,y) &= 1, \\
\POkb(\omega,\xi,x,y) &= \i\kappa_*(\omega)|\xi-y|-\i\overline{\kappa_*(\omega)}|\xi-x|, \\
\POkc(\omega,\xi,x,y) &= \e^{\i\kappa_*(\omega)|\xi-y|-\i\overline{\kappa_*(\omega)}|\xi-x|}-\POkb(\omega,\xi,x,y)-\POka(\omega,\xi,x,y).
\end{align*}

\begin{itemize}
\item
To calculate the double integral in $\POKa$, we first parametrise $\partial\Omega$ depending on the values $x,y\in\Omega_\varepsilon$ with $x\ne y$.
		
We choose a positively oriented, orthonormal basis $(e_j)_{j=1}^3\subset\R^3$ with $e_3=\frac{y-x}{|y-x|}$ and consider the curve
\[ \Gamma_\varphi=\partial\Omega\cap E_\varphi,\quad E_\varphi=\{\xi\in\R^3\mid\left<\xi-\tfrac12(x+y),\cos\varphi\,e_2-\sin\varphi\,e_1\right>=0\}, \]
given as the intersection of the boundary $\partial\Omega$ and the plane $E_\varphi$ through $\frac12(x+y)$, spanned by the vectors $e_3$ and $\cos\varphi\,e_1+\sin\varphi\,e_2$. 

\begin{figure}[tb]
\begin{center}
\begin{tikzpicture}[x=0.6cm, y=0.6cm,font=\footnotesize]
\draw(-2,-1) circle (1pt) node[left,yshift=0.5ex] {$x$};
\draw(2,1) circle (1pt) node[right,yshift=-0.5ex] {$y$};
\draw(0,0) circle (1pt) node[right,yshift=-1ex] {$\tfrac12(x+y)$};

\draw[->] (0,0)--(1,0.5) node[above] {$e_3$}; 
\draw[->] (0,0)--(-0.5,1) node[left] {$\cos\varphi\,e_1+\sin\varphi\,e_2$}; 

\draw[name path=boundary](0,0) circle (9 and 3);
\draw(7,-2.2) node[right] {$\Gamma_\varphi=\Gamma_{\varphi-\pi}$};

\path[name path=orthogonalLine](3,-6) -- (-3,6);
\path[name intersections={of=boundary and orthogonalLine}];
\draw[densely dotted](intersection-1) -- (intersection-2);

\path[name path=radialCoordinate]($(0,0)-(6,3)$) -- ($(-3,6)-(6,3)$);
\path[name intersections={of=boundary and radialCoordinate}];
\coordinate(psi) at (intersection-1);
\coordinate(psiPi) at (intersection-2);
\draw[thick](psi) -- (-6,-3);
\draw[decorate,decoration={brace,amplitude=3pt,raise=1pt}](psi) -- (-6,-3) node[pos=0.5,anchor=west,yshift=1ex,xshift=0.5ex] {$r(\varphi,z)$};
\draw[decorate,decoration={brace,amplitude=3pt,mirror,raise=1pt}](psiPi) -- (-6,-3) node[pos=0.5,anchor=east,yshift=-0.5ex,xshift=-0.5ex] {$-r(\varphi-\pi,z)$};
\draw[decorate,decoration={brace,amplitude=3pt,raise=1pt}] (0,0) -- (-6,-3) node[pos=0.5,below,xshift=0.5ex] {$-z$};
\draw[thick](-6,-3) -- (0,0);
\draw(psi) circle(1pt) node[above,xshift=-1.5ex] {$\psi(\varphi,z)$};
\draw(psiPi) circle(1pt) node[right,xshift=-0.5ex,yshift=1.5ex] {$\psi(\varphi-\pi,z)$};
\draw(-6,-3) circle(1pt) node[right,xshift=-1ex,yshift=-2ex] {$\tfrac12(x+y)+z\,e_3$};

\coordinate(tangentPoint) at (8.877545,0.49319696);
\draw(tangentPoint) circle (1pt);
\draw($(0,0)-(tangentPoint)$) circle (1pt);
\path[name path=XY](-8,-4) -- (8,4);
\path[name path=tangent]($(tangentPoint)-(-2,4)$) -- ($(tangentPoint)+(-2,4)$);
\path[name intersections={of=XY and tangent}];
\coordinate(XYtangent) at (intersection-1);

\draw[densely dashed]($(0,0)-(XYtangent)-(0.2,0.1)$) -- ($(XYtangent)+(0.2,0.1)$);
\draw[densely dotted]($(XYtangent)+(-0.1,0.2)$)--($(tangentPoint)+(0.2,-0.4)$);
\draw[densely dotted]($(0.1,-0.2)-(XYtangent)$)--($(-0.2,0.4)-(tangentPoint)$);
\draw(XYtangent) circle(1pt) node[right] {$\tfrac12(x+y)+b_\varphi e_3$};
\draw($(0,0)-(XYtangent)$) circle(1pt) node[left] {$\tfrac12(x+y)+a_\varphi e_3$};

\pgfresetboundingbox
\clip (-11.5,-4.5) rectangle (12.5,4.5);

\end{tikzpicture}
\end{center}
\caption{\label{fig:parametrisation} Parametrisation of the intersection $\Gamma_\varphi$, $\varphi\in(0,\pi)$, of the boundary $\partial\Omega$ and the plane $E_\varphi$.}
\end{figure}

Setting
\[ a_\varphi=\min_{\xi\in\Gamma_\varphi}\left<\xi-\tfrac12(x+y),e_3\right>\quad\text{and}\quad b_\varphi=\max_{\xi\in\Gamma_\varphi}\left<\xi-\tfrac12(x+y),e_3\right>, \]
we choose the parametrisation $\psi\in C^1(U;\R^3)$ of $\partial\Omega$ (up to a set of measure zero) defined on the open set $U=\{(\varphi,z)\mid z\in(a_\varphi,b_\varphi),\varphi\in(-\pi,0)\cup(0,\pi)\}$ as
\begin{equation}\label{eqPOKaParametrisation}
\psi(\varphi,z) = \tfrac12(x+y)+r(\varphi,z)(\cos\varphi\,e_1+\sin\varphi\,e_2)+z\,e_3,
\end{equation}
where we pick for $\varphi\in(0,\pi)$ the function $r$ in such a way that $r(\varphi,z)>-r(\varphi-\pi,z)$ so that the two maps $\psi(\varphi-\pi,\cdot)$ and $\psi(\varphi,\cdot)$ parametrise together $\Gamma_\varphi$, see \autoref{fig:parametrisation}.

\item
If we would formally interchange the order of integration in the definition \autoref{eqPOKsplit} of $\POKa$, the integral over $\omega$ would lead to a $\delta$-distribution at the zeros of the exponent $\frac1c(|\xi-x|-|\xi-y|)$. We therefore start by analysing this exponent, which is up to the prefactor $\frac1c$ given by
\[ g(\varphi,z) = |\psi(\varphi,z)-x|-|\psi(\varphi,z)-y| \]
if we use the parametrisation $\psi$ for integrating over $\partial\Omega$.

The zeros of $g$ are exactly those points $(\varphi,z)$ such that $\psi(\varphi,z)$ is in the bisection plane of $x$ and $y$. Thus, we have by construction of the parametrisation $\psi$, see \autoref{eqPOKaParametrisation}, that 
\begin{equation}\label{eqPOKaZeroPhase}
g(\varphi,z)=0\quad\text{is equivalent to}\quad z=0.
\end{equation}

Furthermore, we can prove that
\begin{equation}\label{eqPOKaDerivativePhase}
\partial_zg(\varphi,z) = \left<\frac{\psi(\varphi,z)-x}{|\psi(\varphi,z)-x|}-\frac{\psi(\varphi,z)-y}{|\psi(\varphi,z)-y|},\partial_z\psi(\varphi,z)\right>
\end{equation}
only vanishes at the two points where $\psi(\varphi,z)$ is the intersection point of the line through $x$ and~$y$ with $\partial\Omega$: We assume by contradiction that $\partial_zg(\varphi,z)=0$ at a point $(\varphi,z)\in U$ with $\psi(\varphi,z)$ not lying on the line through $x$ and $y$. Then, the first vector $\frac{\psi(\varphi,z)-x}{|\psi(\varphi,z)-x|}-\frac{\psi(\varphi,z)-y}{|\psi(\varphi,z)-y|}$
would be a non-zero vector in $E_\varphi$, and the second vector $\partial_z\psi(\varphi,z)$
is by construction a non-zero tangent vector on $\Gamma_\varphi\subset E_\varphi$ at $\psi(\varphi,z)$. Thus, $\partial_zg(\varphi,z)=0$ would imply that the first vector is a non-trivial multiple of the outer unit normal vector $\nu(\psi(\varphi,z))$ to $\Gamma_\varphi$ at $\psi(\varphi,z)$. However, if $w_1,w_2\in\R^2$ are two unit vectors with $w_1-w_2=n$, $n\ne0$, then $\left<w_1,n\right>=-\left<w_2,n\right>$, since for given $w_1\in S^1$, $w_2$ is the intersection point of $S^1$ with the line parallel to $n$ through $w_1$. Thus, we would have
\[ \left<\frac{\psi(\varphi,z)-x}{|\psi(\varphi,z)-x|},\nu(\psi(\varphi,z))\right> = -\left<\frac{\psi(\varphi,z)-y}{|\psi(\varphi,z)-y|},\nu(\psi(\varphi,z))\right>, \]
but, because of the convexity of $\Omega$, the projections $\left<\frac{\psi-x}{|\psi-x|},\nu\circ\psi\right>$ and $\left<\frac{\psi-y}{|\psi-y|},\nu\circ\psi\right>$ have to be both positive, which is a contradiction. Therefore, $\partial_zg(\varphi,z)=0$ if and only if $\psi(\varphi,z)$ is on the line through $x$ and $y$.

\item
After these preparations, we can now reduce the formula \autoref{eqPOKsplit} for $\POKa$ to a one-dimensional integral and estimate it explicitly to show that $\POKa\in L^2(\Omega_\varepsilon\times\Omega_\varepsilon)$.

We plug in the parametrisation $\psi$ into the definition \autoref{eqPOKsplit} of $\POKa$ and find for $x\ne y$
\begin{equation}\label{eqPOKaParametrisedIntegral}
\POKa(x,y) = \int_{-\pi}^\pi\int_{-\infty}^\infty\int_{a_\varphi}^{b_\varphi}\e^{\i\omega g(\varphi,z)}\mu(\varphi,z,x,y)\d z\d\omega\d\varphi,
\end{equation}
where 
\begin{equation}\label{eqPOKaMu}
\mu(\varphi,z,x,y)=\frac1{32\pi^3}\frac{\e^{-\kappa_\infty(|\psi(\varphi,z)-y|+|\psi(\varphi,z)-x|)}}{|\psi(\varphi,z)-y||\psi(\varphi,z)-x|}\sqrt{\det(\d\psi^{\mathrm T}(\varphi,z)\d\psi(\varphi,z))}.
\end{equation}

To evaluate the integrals, we remark that if $\lambda\in C^1(\R)$ is a real-valued, strictly monotone function with $\lambda(\R)=I\subset\R$ and $\rho\in L^1(\R^2)$, then
\[ \int_{-\infty}^\infty\int_{-\infty}^\infty \e^{\i\omega\lambda(z)}\rho(z)\d z\d\omega = \int_{-\infty}^\infty\int_{-\infty}^\infty\e^{\i\omega\zeta}\rho_\lambda(\zeta)\d\zeta\d\omega,\quad\text{where}\quad\rho_\lambda(\zeta)=\frac{\rho(\lambda^{-1}(\zeta))}{|\lambda'(\lambda^{-1}(\zeta))|}\chi_I(\zeta) \]
with the characteristic function $\chi_I$ of the interval $I$. Now, the inner integral is up to the missing factor $\frac1{\sqrt{2\pi}}$ exactly the inverse Fourier transform $\check\rho_\lambda$ of $\rho_\lambda$ so that we get
\[ \int_{-\infty}^\infty\int_{-\infty}^\infty \e^{\i\omega\lambda(z)}\rho(z)\d z\d\omega = \sqrt{2\pi}\int_{-\infty}^\infty\check\rho_\lambda(\omega)\d\omega = 2\pi\rho_\lambda(0). \]

Applying this result to the two inner integrals in \autoref{eqPOKaParametrisedIntegral}, where we use from above that $g(\varphi,\cdot)$ has only two critical points and is therefore piecewise strictly monotone to first split the innermost integral into integrals over intervals where $\partial_zg(\varphi,z)\ne0$, we find with \autoref{eqPOKaZeroPhase} that
\begin{equation}\label{eqPOKaSimplified}
\POKa(x,y) = 2\pi\int_{-\pi}^\pi\frac{\mu(\varphi,0,x,y)}{|\partial_zg(\varphi,0)|}\d\varphi.
\end{equation}
Evaluating \autoref{eqPOKaDerivativePhase} at $z=0$, we find with $|\psi(\varphi,0)-x|=|\psi(\varphi,0)-y|$, see \autoref{eqPOKaZeroPhase}, and the explicit formula \autoref{eqPOKaParametrisation} for the parametrisation $\psi$ that
\[ \partial_zg(\varphi,0)= \left<\frac{y-x}{|\psi(\varphi,0)-x|},\partial_zr(\varphi,0)(\cos\varphi\,e_1+\sin\varphi\,e_2)+e_3\right>=\frac{|y-x|}{|\psi(\varphi,0)-x|}. \]
Plugging this together with formula \autoref{eqPOKaMu} for $\mu$ into \autoref{eqPOKaSimplified}, we finally get for $\POKa$ the representation
\begin{equation}\label{eqPOKaIntegralIntersection}
\POKa(x,y) = \frac1{16\pi^2|y-x|}\int_{-\pi}^\pi\frac{\e^{-2\kappa_\infty|\psi(\varphi,0)-x|}}{|\psi(\varphi,0)-x|}\sqrt{\det(\d\psi^{\mathrm T}(\varphi,0)\d\psi(\varphi,0))}\d\varphi.
\end{equation}

This is an integral over the intersection $\gamma_{x,y}=\{\xi\in\partial\Omega\mid \left<\xi-\frac12(x+y),y-x\right>=0\}$ of $\partial\Omega$ and the bisection plane of the points $x$ and $y$. To express it in a parametrisation invariant form, we write it as a line integral over $\gamma_{x,y}$, which means we want to use the volume element $|\partial_\varphi\psi(\varphi,0)|\d\varphi$. Calculating the outer unit normal vector explicitly from \autoref{eqPOKaParametrisation}, we find
\[ \nu\circ\psi = \frac{\partial_\varphi\psi\times\partial_z\psi}{\sqrt{\det(\d\psi^{\mathrm T}\d\psi)}} = \frac{r\,e_1+\partial_\varphi r\,e_2-r\partial_zr\,e_3}{\sqrt{\det(\d\psi^{\mathrm T}\d\psi)}}. \]
Thus, we get the relation
\[ |\partial_\varphi\psi|^2 = r^2+(\partial_\varphi r)^2 = \det(\d\psi^{\mathrm T}\d\psi)\left|\nu\circ\psi-\left<\nu\circ\psi,e_3\right>e_3\right|^2. \]
With this, we can write \autoref{eqPOKaIntegralIntersection} as the parametrisation free line integral
\begin{equation}\label{eqPOKaExplicit}
\POKa(x,y) = \frac1{16\pi^2|y-x|}\int_{\gamma_{x,y}}\frac{\e^{-2\kappa_\infty|\xi-x|}}{|\xi-x|}\left|\nu(\xi)-\left<\frac{y-x}{|y-x|},\nu(\xi)\right>\frac{y-x}{|y-x|}\right|^{-1}\d S(\xi).
\end{equation}

In particular, we have
\begin{equation}\label{eqPOKaBound}
|\POKa(x,y)| \le \frac A{|x-y|}\quad\text{for all}\quad x,y\in\Omega_\varepsilon\quad\text{with}\quad x\ne y
\end{equation}
for some constant $A>0$, and therefore $\POKa\in L^2(\Omega_\varepsilon\times\Omega_\varepsilon)$.

\item
To estimate the first perturbation $\POKb$, we remark that, according to \autoref{deWeakAttenuation}, $\kappa_*$ is bounded and square integrable. We can therefore pull in the definition \autoref{eqPOKsplit} of $\POKb$ the integration over the variable $\omega$ as an inverse Fourier transform inside the surface integral and find that
\begin{multline*}
\POKb(x,y) = \frac1{16\pi^2\sqrt{2\pi}}\int_{\partial\Omega}\i\e^{-\kappa_\infty(|\xi-y|+|\xi-x|)} \\
\times\left(\frac{\check\kappa_*(\tfrac1c(|\xi-y|-|\xi-x|))}{|\xi-x|}-\frac{\overline{\check\kappa_*(\tfrac1c(|\xi-x|-|\xi-y|))}}{|\xi-y|}\right)\d S(\xi).
\end{multline*}
Choosing now a radius $R>\diam\Omega$, we get by applying Hölder's inequality and increasing the domain of integration that
\[ \int_{\Omega_\varepsilon}\int_{\Omega_\varepsilon}|\POKb(x,y)|^2\d x\d y \le \frac{|\partial\Omega|}{128\pi^5\varepsilon^2}\int_{\partial\Omega}\int_{B_R(\xi)}\int_{B_R(\xi)}|\check\kappa_*(\tfrac1c(|\xi-y|-|\xi-x|))|^2\d x\d y\d S(\xi) \]
Thus, switching in the two inner integrals to spherical coordinates around the point $\xi$, we find
\[ \int_{\Omega_\varepsilon}\int_{\Omega_\varepsilon}|\POKb(x,y)|^2\d x\d y \le \frac{|\partial\Omega|^2R^4}{8\pi^3\varepsilon^2}\int_0^R\int_0^R|\check\kappa_*(\tfrac1c(r-\rho))|^2\d r\d\rho \le \frac{|\partial\Omega|^2R^5c}{8\pi^3\varepsilon^2}\|\check\kappa_*\|_2^2, \]
which shows that $\POKb\in L^2(\Omega_\varepsilon\times\Omega_\varepsilon)$.

\item
The second perturbation $\POKc$ can be bounded directly via
\begin{equation}\label{eqPOKcEstimate}
|\POKc(x,y)| \le \frac1{32\pi^3\varepsilon^2}\int_{-\infty}^\infty\int_{\partial\Omega}|\POkc(\omega,\xi,x,y)|\d S(\xi)\d\omega
\end{equation}
for all $x,y\in\Omega_\varepsilon$. Using that for every bounded set $D\subset\C$, there exists a constant $C>0$ such that
\[ |\e^z-z-1|\le C|z|^2\quad\text{for all}\quad z\in D \]
holds, we find a constant $\tilde C>0$ so that
\begin{equation}\label{eqPOkcEstimate}
|\POkc(\omega,\xi,x,y)|\le \tilde C|\kappa_*(\omega)|^2\quad\text{for all}\quad \omega\in\R,\;\xi\in\partial\Omega,\;x,y\in\Omega_\varepsilon.
\end{equation}
Plugging this into \autoref{eqPOKcEstimate}, we get that
\[ |\POKc(x,y)| \le \frac{\tilde C|\partial\Omega|}{32\pi^3\varepsilon^2}\|\kappa_*\|_2^2\quad\text{for all}\quad x,y\in\Omega_\varepsilon. \]
Thus, in particular, we have $\POKc\in L^2(\Omega_\varepsilon\times\Omega_\varepsilon)$.
\end{itemize}

We conclude therefore that $\POK=\POKa+\POKb+\POKc\in L^2(\Omega_\varepsilon\times\Omega_\varepsilon)$, which shows as in the first part of the proof that $\PO^*\PO$ is a Hilbert--Schmidt operator and compact.
\end{enumerate}
\end{proof}

\section{Singular Values of the Integrated Photoacoustic Operator}
We have seen in \autoref{thPO2compact} that the operator $\PO^*\PO$, given by \autoref{eqPATOp2}, is a compact operator. The inversion of the photoacoustic problem is therefore ill-posed. To quantify the ill-posedness, we want to study the decay of the eigenvalues $(\lambda_n(\PO^*\PO))_{n\in\N}$ of $\PO^*\PO$, where we enumerate the eigenvalues in decreasing order: $0\le\lambda_{n+1}(\PO^*\PO)\le\lambda_n(\PO^*\PO)$ for all $n\in\N$.

We differ again between the two cases of a strong and of a weak attenuation coefficient $\kappa$.

\subsection{Strongly Attenuating Media}

To obtain the behaviour of the eigenvalues of $\PO^*\PO$ in the case of a strong attenuation coefficient~$\kappa$, see \autoref{deStrongAttenuation}, we will use \autoref{thExponentialDecay} which gives a criterion for a general integral operator with smooth kernel to have exponentially fast decaying eigenvalues in terms of an upper bound on the derivatives of the kernel, see \autoref{eqKernelUpperBound}. We therefore only have to check that the kernel \autoref{eqPATOp2Kernel} of $\PO^*\PO$ fulfils these estimates. The calculations are straightforward (although a bit tedious) and can be found explicitly in \autoref{seDerivativeGreensFunction}.

\begin{proposition}\label{thForwardOperatorKernelUpperBound}
Let $\Omega$ be a bounded Lipschitz domain in $\R^3$, $\varepsilon>0$ and $\PO:L^2(\Omega_\varepsilon)\to L^2(\R\times\partial\Omega)$ be the integrated photoacoustic operator of a strong attenuation coefficient $\kappa$.

Then, the kernel $F_\kappa$ of $\PO^*\PO$, explicitly given by \autoref{eqPATOp2Kernel}, fulfils the estimate
\begin{equation}\label{eqForwardOperatorKernelUpperBound}
\frac1{j!}\sup_{x,y\in\Omega_\varepsilon}\sup_{v\in S^2}\left|\Dj F_\kappa(x,y+sv)\right|\le Bb^jj^{(\frac N\beta-1)j}\quad\text{for all}\quad j\in\N_0,
\end{equation}
for some constants $B,b>0$, where $N\in\N$ denotes the exponent for $\ell=0$ in the condition \autoref{eqAttCoeffPolBdd} and $\beta\in(0,N]$ is the exponent in the condition \autoref{eqStrongAttenuation} for the strong attenuation coefficient $\kappa$.
\end{proposition}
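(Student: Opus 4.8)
The plan is to estimate the $s$-derivatives at $s=0$ by extending the integrand of \autoref{eqPATOp2Kernel} holomorphically in $s$ and invoking Cauchy's integral formula with a radius that is allowed to depend on $\omega$. Since in \autoref{eqPATOp2Kernel} only the argument $y$ is shifted to $y+sv$, I would write the integrand as the product of the $s$-independent factor $\frac{\e^{-\i\overline{\kappa(\omega)}|\xi-x|}}{|\xi-x|}$, whose modulus is at most $\frac1\varepsilon$ because $\Im\kappa\ge0$ and $|\xi-x|\ge\varepsilon$, with the $s$-dependent factor
\[ h(s) = \frac{\e^{\i\kappa(\omega)\rho(s)}}{\rho(s)},\qquad \rho(s)=|\xi-y-sv|. \]
Interchanging differentiation and integration in \autoref{eqPATOp2Kernel} (justified by the uniform decay bounds below), this reduces the estimate of $\frac1{j!}\Dj\POK(x,y+sv)$ to a bound on $\frac1{j!}|h^{(j)}(0)|$ that is uniform in $\omega\in\R$, $\xi\in\partial\Omega$, $x,y\in\Omega_\varepsilon$, and $v\in S^2$.

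Next I would establish holomorphy of $h$ together with a pointwise bound. Writing $\rho(s)^2=(s-b)^2+d^2$ with $b=\left<\xi-y,v\right>$ and $d^2=|\xi-y|^2-b^2\ge0$, the branch points $s=b\pm\i d$ of the principal square root lie at distance $\sqrt{b^2+d^2}=|\xi-y|\ge\varepsilon$ from the origin, so $h$ extends holomorphically to the disc $\{|s|<\varepsilon\}$. For a fixed small $r_0<\varepsilon$ and any radius $r\le r_0$, a short geometric computation gives $|\rho(s)|\ge\frac\varepsilon{\sqrt2}$, $\Re\rho(s)\ge\frac\varepsilon2$, and $|\Im\rho(s)|\le C_\rho r$ on the circle $|s|=r$. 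Hence, using the polynomial bound \autoref{eqAttCoeffPolBdd} for $\ell=0$,
\[ \Im(\kappa(\omega)\rho(s)) = \Im\kappa(\omega)\,\Re\rho(s)+\Re\kappa(\omega)\,\Im\rho(s) \ge \tfrac\varepsilon2\Im\kappa(\omega)-C'(1+|\omega|)^N r, \]
and Cauchy's estimate yields
\[ \frac1{j!}|h^{(j)}(0)| \le \frac1{r^j}\sup_{|s|=r}|h(s)| \le \frac{\sqrt2}{\varepsilon}\,\frac1{r^j}\,\e^{-\frac\varepsilon2\Im\kappa(\omega)+C'(1+|\omega|)^N r}. \]

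The decisive step is the choice of the radius as a function of $\omega$ and $j$. Balancing $r^{-j}$ against $\e^{C'(1+|\omega|)^N r}$ leads to $r=\min\{r_0,\frac{j}{C'(1+|\omega|)^N}\}$; on the high-frequency region where the second value is selected, the two competing factors combine to $\left(\frac{C'\e}{j}\right)^j(1+|\omega|)^{Nj}$, while the decay $\e^{-\frac\varepsilon2\Im\kappa(\omega)}$ survives untouched. Integrating over $\partial\Omega$ (which only contributes a factor $|\partial\Omega|$) and over $\omega$, and invoking the strong-attenuation bound $\Im\kappa(\omega)\ge\kappa_0|\omega|^\beta$ from \autoref{deStrongAttenuation}, I would reduce the frequency integral to $\int_{\R}(1+|\omega|)^{Nj}\e^{-\frac\varepsilon2\kappa_0|\omega|^\beta}\d\omega$, which after the substitution $u=|\omega|^\beta$ equals a constant to the power $j$ times $\Gamma\!\left(\frac{Nj+1}\beta\right)$. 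On the complementary low-frequency region, where $r$ is capped at $r_0$, one has $C'(1+|\omega|)^N r_0\le j$ and the region has length $\Ord(j^{1/N})$, so its contribution is at most $\left(\frac\e{r_0}\right)^j j^{1/N}$.

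Finally, Stirling's formula gives $\Gamma\!\left(\frac{Nj+1}\beta\right)\le C^j j^{Nj/\beta}$, and multiplying by the factor $j^{-j}$ coming from $\left(\frac1j\right)^j$ produces $b^j j^{(\frac N\beta-1)j}$, which is precisely the bound \autoref{eqForwardOperatorKernelUpperBound}; the low-frequency contribution is absorbed because $\frac N\beta\ge1$. That $\beta\le N$ holds automatically, since $\kappa_0|\omega|^\beta\le\Im\kappa(\omega)\le|\kappa(\omega)|\le\kappa_1(1+|\omega|)^N$ for large $|\omega|$ forces $\beta\le N$, so the exponent $\left(\frac N\beta-1\right)j$ is non-negative. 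I expect the main obstacle to be exactly this $\omega$-dependent radius optimisation: one must check that the order-$N$ polynomial growth of $\kappa$ entering the holomorphic bound on $h$ competes against the order-$|\omega|^\beta$ exponential decay so that the Gamma asymptotics reproduce the Gevrey exponent $\frac N\beta$. By comparison, the geometric estimates on $\rho(s)$ and the interchange of differentiation and integration are routine.
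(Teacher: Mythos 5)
Your proposal is correct, but it reaches the estimate by a genuinely different route than the paper. Both arguments share the same skeleton: differentiate under the integral in \autoref{eqPATOp2Kernel}, bound the $j$-th $s$-derivative of the $y$-dependent factor by an expression of the form $C^j j^{-j}(1+|\omega|)^{Nj}\e^{-c\varepsilon\Im\kappa(\omega)}$, and then carry out the frequency integral via $\Im\kappa(\omega)\ge\kappa_0|\omega|^\beta-C$, the substitution $\nu=|\omega|^\beta$, the Gamma function and Stirling's formula --- this last step is exactly the paper's \autoref{thFrequencyIntegral}, which your high-/low-frequency split reproduces (your absorption of the low-frequency term $(\e/r_0)^j j^{1/N}$ into $Bb^jj^{(\frac N\beta-1)j}$ is fine since $\frac N\beta\ge1$, and your observation that $\beta\le N$ is automatic is also correct). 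The difference is in how the derivative bound is obtained. The paper stays entirely real-variable: in \autoref{seDerivativeGreensFunction} it writes $G_\kappa(\omega,\cdot)$ radially and applies Fa\`a di Bruno's formula twice (\autoref{thDerivativeRadialSymmetric}, \autoref{thDerivativeOfExponential}) to arrive at \autoref{thGreensFunctionDirectionalDerivative}, i.e.\ $\frac1{j!}\bigl|\Dj G_\kappa(\omega,x+sv)\bigr|\le|G_\kappa(\omega,x)|\,C^j\bigl(|x|^{-j}+\frac1{j!}|\kappa(\omega)|^j\bigr)$, the Gevrey factor $\frac1{j!}$ coming from the Taylor coefficients of the exponential. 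You instead exploit that $s\mapsto|\xi-y-sv|$ continues holomorphically to the disc $|s|<\varepsilon$ (its branch points lie at distance $|\xi-y|\ge\varepsilon$ from the origin) and extract the factor $j^{-j}$ from Cauchy's inequalities with the $\omega$-dependent radius $r\sim\min\{r_0,\,j(1+|\omega|)^{-N}\}$; your two regimes for $r$ play precisely the role of the paper's two terms $|x|^{-j}$ and $\frac1{j!}|\kappa(\omega)|^j$. What each approach buys: the paper's computation is elementary and produces a standalone derivative estimate for $G_\kappa$ valid for an arbitrary attenuation coefficient, at the cost of combinatorial bookkeeping with double factorials and partition identities; your argument is shorter and makes the Gevrey-type growth transparent as a radius optimisation, at the price of invoking analyticity of the explicit kernel in the shift parameter --- which is all that is needed here, since the kernel is given in closed form.
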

\begin{proof}
Putting the derivatives with respect to $s$ inside the integrals in the definition \autoref{eqPATOp2Kernel} of the kernel~$F_\kappa$, we get that
\[ \Dj F_\kappa(x,y+s v) = \int_{-\infty}^\infty\frac1{\omega^2}\int_{\partial\Omega}\overline{G_\kappa(\omega,x-\xi)}\Dj G_\kappa(\omega,y-\xi+s v)\d S(\xi)\d\omega, \]
where $G_\kappa$ denotes the integral kernel \autoref{eqSolAttWaveKernel}. We remark that the term $\frac1{\omega^2}$ comes from the fact that we consider the integrated photoacoustic operator instead of the operator which maps directly the measurements to the initial data.

Using \autoref{thGreensFunctionDirectionalDerivative} to estimate the derivative of $G_\kappa$, we find a constant $C>0$ so that
\begin{multline*}
\frac1{j!}\left|\Dj F_\kappa(x,y+sv)\right| \\
\le C^j\int_{-\infty}^\infty\frac1{\omega^2}\int_{\partial\Omega}|G_\kappa(\omega,x-\xi)|\,|G_\kappa(\omega,y-\xi)|\left(\frac1{|y-\xi|^j}+\frac1{j!}|\kappa(\omega)|^j\right)\d S(\xi)\d\omega.
\end{multline*}
From the uniform estimate
\[ |G_\kappa(\omega,x-\xi)|\le\frac{|\omega|\e^{-\varepsilon\Im\kappa(\omega)}}{4\pi\varepsilon\sqrt{2\pi}}\quad\text{for all}\quad x\in\Omega_\varepsilon,\;\xi\in\partial\Omega,\;\omega\in\R, \]
which is directly obtained from the definition \autoref{eqSolAttWaveKernel} of $G_\kappa$ by using that $|x-\xi|\ge\varepsilon$ for all $\xi\in\partial\Omega$ and $x\in\Omega_\varepsilon$, it then follows that
\[ \frac1{j!}\left|\Dj F_\kappa(x,y+sv)\right| \le \frac{|\partial\Omega|C^j}{32\pi^3\varepsilon^2}\int_{-\infty}^\infty\e^{-2\varepsilon\Im\kappa(\omega)}\left(\frac1{\varepsilon^j}+\frac1{j!}|\kappa(\omega)|^j\right)\d\omega. \]

Applying now \autoref{thFrequencyIntegral} (for the first term in the integrand, we use \autoref{thFrequencyIntegral} with $j=0$), we find constants $B,b>0$ so that
\[ \frac1{j!}\left|\Dj F(x,y+sv)\right| \le Bb^jj^{(\frac N\beta-1)j}\quad\text{for all}\quad x,y\in\Omega_\varepsilon,\;v\in S^2,\;j\in\N_0. \]
\end{proof}

Combining \autoref{thForwardOperatorKernelUpperBound} with \autoref{thExponentialDecay}, we obtain the decay of the singular values of the integrated photoacoustic operator.
\begin{corollary}
Let $\Omega$ be a bounded Lipschitz domain in $\R^3$, $\varepsilon>0$, and $\PO:L^2(\Omega_\varepsilon)\to L^2(\R\times\partial\Omega)$ be the integrated photoacoustic operator of a strong attenuation coefficient $\kappa$.

Then, there exist constants $C,c>0$ so that the eigenvalues $(\lambda_n(\PO^*\PO))_{n\in\N}$ of $\PO^*\PO$ in decreasing order fulfil
\begin{equation}\label{eqForwardOperatorDecayRate}
\lambda_n(\PO^*\PO)\le Cn\sqrt[m]n\exp\left(-cn^{\frac\beta{Nm}}\right)\quad\text{for all}\quad n\in\N.
\end{equation}
\end{corollary}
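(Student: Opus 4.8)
The plan is to obtain the statement as a direct application of the general eigenvalue-decay criterion \autoref{thExponentialDecay} to the self-adjoint integral operator $\PO^*\PO$. By \autoref{thPO2compact}, $\PO^*\PO$ is a self-adjoint Hilbert--Schmidt operator on $L^2(\Omega_\varepsilon)$ whose kernel is $\POK$, and $\Omega_\varepsilon$ is a bounded subset of $\R^m$ with $m=3$; these are exactly the structural hypotheses under which \autoref{thExponentialDecay} is formulated. The only analytic input that remains is a Gevrey-type control on the derivatives of $\POK$, and this is precisely what \autoref{thForwardOperatorKernelUpperBound} provides: the bound \autoref{eqForwardOperatorKernelUpperBound} states that $\frac1{j!}\sup_{x,y\in\Omega_\varepsilon}\sup_{v\in S^2}|\Dj\POK(x,y+sv)|$ grows no faster than $Bb^jj^{(\frac N\beta-1)j}$, which is the form required by the hypothesis \autoref{eqKernelUpperBound} with Gevrey exponent $\frac N\beta-1$.

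The remaining work is the bookkeeping of the exponents. First I would check that \autoref{eqForwardOperatorKernelUpperBound}, being uniform in $x,y\in\Omega_\varepsilon$ and $v\in S^2$ and valid for every $j\in\N_0$, matches the quantifier structure demanded by \autoref{eqKernelUpperBound}, so that no further regularisation of the kernel is needed. Then I would substitute the Gevrey exponent into the conclusion of \autoref{thExponentialDecay}. Since the $j$-th derivative of $\POK$ is of order $j!\,j^{(\frac N\beta-1)j}\sim(j!)^{N/\beta}$, the kernel lies in the Gevrey class of index $\frac N\beta$; a Gevrey-$s$ kernel on a bounded domain in $\R^m$ yields eigenvalues decaying like $\exp(-cn^{\frac1{sm}})$, and with $s=\frac N\beta$ the decay exponent becomes
\[ \frac1{m\bigl((\tfrac N\beta-1)+1\bigr)} = \frac1{m\,\tfrac N\beta} = \frac\beta{Nm}, \]
which is exactly the exponent in \autoref{eqForwardOperatorDecayRate}; the polynomial prefactor $Cn\sqrt[m]n$ is inherited verbatim from the general theorem.

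The genuine difficulty of the whole chain sits entirely inside \autoref{thExponentialDecay}, which we are entitled to assume here: establishing that a Gevrey bound on the kernel forces stretched-exponential eigenvalue decay requires approximating the integral operator by finite-rank operators built from truncated Taylor polynomials, controlling the approximation error through the derivative bound, and optimising the polynomial degree against the index $n$ --- this is where the interplay of $m$, $N$, and $\beta$ in the exponent is generated. Granting that theorem, the corollary is then immediate, and the only point that requires care is the correct reading off of the ambient dimension $m=3$ and of the Gevrey exponent $\frac N\beta-1$ from \autoref{thForwardOperatorKernelUpperBound}.
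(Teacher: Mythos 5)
Your proposal is correct and follows exactly the paper's own proof: invoke \autoref{thForwardOperatorKernelUpperBound} to get the kernel bound \autoref{eqForwardOperatorKernelUpperBound}, then apply \autoref{thExponentialDecay} with $\mu=\frac N\beta-1$, so that the decay exponent $\frac1{m(1+\mu)}$ becomes $\frac\beta{Nm}$. The exponent bookkeeping and the identification of the hypotheses (positive semi-definiteness of $\PO^*\PO$, Hermitian smooth kernel, uniformity of the derivative bound) match the paper's argument.
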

\begin{proof}
According to \autoref{thForwardOperatorKernelUpperBound}, we know that there exist constants $B,b>0$ so that the integral kernel $F_\kappa$ of the operator $\PO^*\PO$ fulfils the estimate \autoref{eqForwardOperatorKernelUpperBound}. Applying thus \autoref{thExponentialDecay} with $\mu=\frac N\beta-1$ to the operator $\PO^*\PO$, we obtain the decay rate \autoref{eqForwardOperatorDecayRate}.
\end{proof}

\subsection{Weakly Attenuating Media}
To analyse the operator $\PO^*\PO$ in the case of a weak attenuation coefficient $\kappa$, see \autoref{deWeakAttenuation}, we split $\PO$ as in the proof of \autoref{thPOBdd} in $\PO=\POa+\POb$, see \autoref{eqPOa} and \autoref{eqPOb}. We will show that decomposing the operator as $\PO^*\PO=\POa^*\POa+\mathcal Q_\kappa$, the eigenvalues of the operator $\mathcal Q_\kappa=\POa^*\POb+\POb^*\POa+\POb^*\POb$ decay faster than those of $\POa^*\POa$ so that $\mathcal Q_\kappa$ does not alter the asymptotic decay rate of the eigenvalues of $\POa^*\POa$.

The term $\POa^*\POa$ corresponds to a constant attenuation and its behaviour was already discussed in \cite{Pal10}.
\begin{lemma}\label{thEvNonAtt}
Let $\kappa$ be a weak attenuation coefficient, $\Omega\subset\R^3$ be a bounded, convex domain with smooth boundary, and $\varepsilon>0$. We define the operator $\POa:L^2(\Omega_\varepsilon)\to L^2(\Omega_\varepsilon)$ by \autoref{eqPOa}. Then, there exist constants $C_1,C_2>0$ such that we have
\begin{equation}\label{eqEvNonAtt}
C_1n^{-\frac23}\le\lambda_n(\POa^*\POa)\le C_2n^{-\frac23}\quad\text{for all}\quad n\in\N.
\end{equation}
\end{lemma}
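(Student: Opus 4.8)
The plan is to realise $\POa^*\POa$ as a weighted version of the non-attenuated spherical mean operator, whose eigenvalue asymptotics $\asymp n^{-\frac23}$ are available from \cite{Pal10}, and to treat the constant-attenuation factor $\e^{-\kappa_\infty|\xi-y|}$ as a multiplier that cannot change this rate. First I would pass to the time domain. As computed in the proof of \autoref{thPOBdd}, $\POa$ is the temporal inverse Fourier transform of the operator $\mathcal P_\kappa^{(0)}$ with
\[ \mathcal P_\kappa^{(0)}h(t,\xi) = \frac{\e^{-\kappa_\infty ct}}{4\pi t}\int_{\Omega_\varepsilon\cap\partial B_{ct}(\xi)}h(z)\d S(z),\quad t>0, \]
and $\mathcal P_\kappa^{(0)}h(t,\xi)=0$ for $t\le0$. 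Since the Fourier transform is unitary on $L^2(\R\times\partial\Omega)$, we have the identity $\POa^*\POa=(\mathcal P_\kappa^{(0)})^*\mathcal P_\kappa^{(0)}$ of operators on $L^2(\Omega_\varepsilon)$. Writing $\mathcal P_\kappa^{(0)}=M_w\,\mathcal M$, where $M_w$ is multiplication in $t$ by $w(t)=\e^{-\kappa_\infty ct}$ and $\mathcal M h(t,\xi)=\frac1{4\pi t}\int_{\Omega_\varepsilon\cap\partial B_{ct}(\xi)}h\d S$, we obtain $\POa^*\POa=\mathcal M^* M_{w^2}\mathcal M$ with $w^2(t)=\e^{-2\kappa_\infty ct}$.

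The decisive point is that the time support is a compact interval bounded away from the origin: for $\xi\in\partial\Omega$ and $z\in\Omega_\varepsilon$ one has $|\xi-z|\in[\varepsilon,\diam\Omega]$, so $\mathcal M h$ is supported in $t\in[T_0,T_1]$ with $T_0=\frac\varepsilon c>0$ and $T_1=\frac{\diam\Omega}c$. On this interval the weight is squeezed between two positive constants,
\[ 0<m:=\e^{-2\kappa_\infty cT_1}\le w^2(t)\le\e^{-2\kappa_\infty cT_0}=:M, \]
and, since $\langle\mathcal M^* M_{w^2}\mathcal M h,h\rangle=\int w^2|\mathcal M h|^2$, this yields the operator inequalities $m\,\mathcal M^*\mathcal M\le\POa^*\POa\le M\,\mathcal M^*\mathcal M$ in the sense of quadratic forms on $L^2(\Omega_\varepsilon)$.

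Because all three operators are positive, compact, and self-adjoint, the Courant--Fischer min-max characterisation of the eigenvalues is monotone with respect to such form inequalities, and therefore
\[ m\,\lambda_n(\mathcal M^*\mathcal M)\le\lambda_n(\POa^*\POa)\le M\,\lambda_n(\mathcal M^*\mathcal M)\quad\text{for all}\quad n\in\N. \]
The same multiplier argument, applied now to the smooth, strictly positive time weight relating $\mathcal M$ to the classical spherical mean transform $\mathcal S$ (the factor $c^2t$ is again bounded above and below on $[T_0,T_1]$), shows that $\lambda_n(\mathcal M^*\mathcal M)$ is comparable to $\lambda_n(\mathcal S^*\mathcal S)$.

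It then remains to quote the non-attenuated result: by \cite{Pal10}, for a convex domain $\Omega$ with smooth boundary and sources supported in $\Omega_\varepsilon$ the singular values of $\mathcal S$ decay like $n^{-\frac13}$, that is $\lambda_n(\mathcal S^*\mathcal S)\asymp n^{-\frac23}$. Chaining the comparisons produces constants $C_1,C_2>0$ with \autoref{eqEvNonAtt}. I expect the main obstacle to be the careful matching of $\mathcal M$ with the exact transform analysed in \cite{Pal10} and the verification that its hypotheses are in force here --- smoothness and convexity of $\partial\Omega$, and the separation of the source region from the measurement boundary, which is precisely what the parameter $\varepsilon>0$ provides. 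Once this identification is made, the exponential attenuation weight and the geometric normalisation are controlled entirely by the compactness of the time support, and the two-sided estimate follows.
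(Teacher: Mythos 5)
Your argument is essentially correct, but it follows a genuinely different route from the paper's. The paper never reduces to the non-attenuated case: it shows that $\POa^*\POa$ has the same non-zero eigenvalues as an integral operator $\mathcal T$ on the double of $\overline{\Omega_\varepsilon}$ (a closed manifold), writes the kernel $\POKa$ as an oscillatory integral with a symbol $g(x,k)$, determines the leading behaviour of that symbol by the stationary phase method --- finding it positive and homogeneous of order $-2$, with the constant-attenuation weight $\e^{-2\kappa_\infty|\xi-x|}$ simply absorbed into the principal symbol --- and then obtains the two-sided bound from the Weyl-type eigenvalue asymptotics for elliptic pseudodifferential operators on closed manifolds, \cite[Theorem 15.2]{Shu87}. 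Your proof replaces all of this by the factorisation $\mathcal P_\kappa^{(0)}=M_w\mathcal M$, the observation that $\mathcal M h(\cdot,\xi)$ is supported in $[\varepsilon/c,\diam\Omega/c]$ so that the weight is pinched between positive constants there, and the min-max monotonicity of eigenvalues under form inequalities between positive compact self-adjoint operators; each of these steps is sound (the weight $\e^{-\kappa_\infty|\xi-y|}$ really is a function of $t$ alone on the sphere $|\xi-y|=ct$, and Plancherel in time gives $\POa^*\POa=(\mathcal P_\kappa^{(0)})^*\mathcal P_\kappa^{(0)}$ exactly as in the proof of \autoref{thPOBdd}). What your route buys is brevity and elementarity: no manifolds, no stationary phase, no pseudodifferential calculus. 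What it costs is self-containedness: the entire two-sided asymptotics $\lambda_n(\mathcal S^*\mathcal S)\asymp n^{-\frac23}$ is outsourced to \cite{Pal10}, so your proof is only as strong as the exact statement proved there --- you need both the upper and the lower bound, for precisely this spherical-mean-type operator, with sources confined to $\Omega_\varepsilon$ and data on all of $\partial\Omega\times(0,\infty)$. The paper's introduction does credit \cite{Pal10} with this result in the standard photoacoustic case, so the citation is defensible, but the authors evidently preferred to reprove it in a self-contained way (and directly in the constant-attenuation generality, where their symbol computation also exhibits the geometric content of the leading coefficient). The hypothesis-matching with \cite{Pal10}, which you yourself flag as the main obstacle, is therefore the one step of your proof that must be made airtight before it can stand.
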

\begin{proof}
The idea of the proof is to show that the operator $\POa^*\POa$ has the same eigenvalues as an elliptic pseudofifferential operator $\mathcal T:L^2(M)\to L^2(M)$ of order $-2$ on a closed manifold $M$. Then, we can apply the result \cite[Theorem 15.2]{Shu87} to obtain the asymptotic behaviour of the eigenvalues.

\begin{itemize}
\item
First, we want to replace the operator $\POa^*\POa$ by a pseudodifferential operator $\mathcal T$ on a closed manifold with the same eigenvalues.
		
We have seen in the proof of \autoref{thPO2compact} that the operator $\POa^*\POa$ is an integral operator with integral kernel $\POKa$ defined by \autoref{eqPOKsplit}. We now generate the closed  manifold $M$ by taking two copies of $\overline{\Omega_\varepsilon}$ and identifying their boundary points: $M=(\overline{\Omega_\varepsilon}\times\{1,2\})/\sim$ with the equivalence relation $(x,a)\sim(\tilde x,\tilde a)$ if and only if $x=\tilde x$ and either $a=\tilde a$ or $x\in\partial\Omega_\varepsilon$. This is called the double of the manifold with boundary $\overline{\Omega_\varepsilon}$, see for example \cite[Example 9.32]{Lee13}. Then, the operator $\mathcal T:L^2(M)\to L^2(M)$ given by
\[ \mathcal Th([x,a]) = \frac12\sum_{b=1}^2\int_{\Omega_\varepsilon}\POKa(x,y)h([y,b])\d y \]
has the same non-zero eigenvalues as $\POa^*\POa:L^2(\Omega_\varepsilon)\to L^2(\Omega_\varepsilon)$. Indeed, if $h$ is an eigenfunction of $\mathcal T$ with eigenvalue $\lambda\ne0$, then necessarily $h([x,1])=h([x,2])$ for almost every $x\in\Omega_\varepsilon$ and therefore $x\mapsto h([x,1])$ is an eigenfunction of $\POa^*\POa$ with eigenvalue $\lambda$. Conversely, if $h$ is an eigenfunction of $\POa^*\POa$ with eigenvalue $\lambda$, then clearly $[x,a]\mapsto h(x)$ is an eigenfunction of $\mathcal T$ with eigenvalue $\lambda$.

To write $\mathcal T$ in the form of a pseudodifferential operator, we extend the kernel $\POKa$ to a smooth function $\tilde F_\kappa^{(0)}\in C^\infty(\Omega_\varepsilon\times\R^3)$ by choosing an arbitrary cut-off function $\phi\in C^\infty_{\mathrm c}(\R^3)$ with $\phi(y)=1$ for $y\in\Omega_\varepsilon$ and $\supp\phi\subset\Omega$ and setting
\begin{equation}\label{eqEvNonAttTildeF}
\tilde F_\kappa^{(0)}(x,y) = \frac{\phi(y)}{32\pi^3}\int_{-\infty}^\infty\int_{\partial\Omega}\frac{\e^{-\kappa_\infty(|\xi-y|+|\xi-x|)}}{|\xi-y||\xi-x|}\e^{\i\frac\omega c(|\xi-y|-|\xi-x|)}\d S(\xi)\d\omega,\quad x\in\Omega_\varepsilon,\;y\in\R^3.
\end{equation}
Then, defining $g$ up to the normalisation factor $(2\pi)^{\frac32}$ as the inverse Fourier transform of $\tilde F_\kappa^{(0)}$ with respect to $y$:
\begin{equation}\label{eqEvNonAttDefG}
g(x,k) = \int_{\R^3}\tilde F_\kappa^{(0)}(x,y)\e^{-\i\left<k,x-y\right>}\d y,\quad x\in\Omega_\varepsilon,\;k\in\R^3,
\end{equation}
we can write the kernel $\POKa$ with the Fourier inversion theorem in the form
\[ \POKa(x,y) = \frac1{(2\pi)^3}\int_{\R^3}g(x,k)\e^{\i\left<k,x-y\right>}\d k,\quad x,y\in\Omega_\varepsilon, \]
where $g$ is smooth, since $g(x,\cdot)$ is the Fourier transform of a function with compact support.
		
\item
In the expression \autoref{eqEvNonAttDefG} for $g$, the integral over $\omega$ from the definition \autoref{eqEvNonAttTildeF} of $\tilde F_\kappa^{(0)}$ can be seen as a one-dimensional inverse Fourier transform, which allows us to get rid of two one-dimensional integrals.

To this end, we pull the outer integral over $\R^3$ inside both other integrals and write it in spherical coordinates around the point $\xi$: $y=\xi+r\theta$ with $r>0$ and $\theta\in S^2$. This gives us
\begin{multline*}
g(x,k) = \frac1{32\pi^3}\int_{\partial\Omega}\e^{\i\left<k,\xi-x\right>}\int_{S^2}\int_{-\infty}^\infty\e^{-\i\frac\omega c|\xi-x|} \\
\times\int_0^\infty\frac{r\e^{-\kappa_\infty(r+|\xi-x|)}}{|\xi-x|}\phi(\xi+r\theta)\e^{\i(\frac\omega c+\left<k,\theta\right>)r}\d r\d\omega\d S(\theta)\d S(\xi).
\end{multline*}
For every $\xi\in\partial\Omega$ and every $\theta\in S^2$, the two inner integrals with respect to $r$ and $\omega$ each represent a Fourier transform and we get with $\rho(r)=\frac{r\e^{-\kappa_\infty(r+|\xi-x|)}}{|\xi-x|}\phi(\xi+r\theta)\chi_{[0,\infty)}(r)$ that
\begin{align*}
\int_{-\infty}^\infty\e^{-\i\frac\omega c|\xi-x|}\int_0^\infty\rho(r)\e^{\i(\frac\omega c+\left<k,\theta\right>)r}\d r\d\omega &= \sqrt{2\pi}\int_{-\infty}^\infty\check\rho(\tfrac\omega c+\left<k,\theta\right>)\e^{-\i\frac\omega c|\xi-x|}\d\omega \\
&=2\pi c\e^{\i|\xi-x|\left<k,\theta\right>}\rho(|\xi-x|).
\end{align*}
Thus, we find
\begin{equation}\label{eqEvNonAttG}
g(x,k) = \frac c{16\pi^2}\int_{\partial\Omega}\int_{S^2}\e^{-2\kappa_\infty|\xi-x|}\phi(\xi+|\xi-x|\theta)\e^{\i(|\xi-x|\left<k,\theta\right>+\left<k,\xi-x\right>)}\d S(\theta)\d S(\xi).
\end{equation}

\item
We are now interested in the leading order asymptotics of $g(x,k)$ as $|k|\to\infty$. To obtain this, we will apply the stationary phase method, see for example \cite[Theorem 7.7.5]{Hoe03}.
		
So, let $\psi\in C^\infty(U;\R^3)$ be a parametrisation of $\partial\Omega$ and $\Theta\in C^\infty(V;\R^3)$ be a paramtetrisation of $S^2$ with some open sets $U,V\subset\R^2$. Then, according to the stationary phase method, the asymptotics is determined by the region around the critical points of the phase function
\[ \Phi_{x,k}(\eta,\vartheta) = |\psi(\eta)-x|\left<k,\Theta(\vartheta)\right>+\left<k,\psi(\eta)-x\right> \]
in the integrand in~\autoref{eqEvNonAttG}. The optimality conditions
\[ 0 = \partial_{\vartheta_i}\Phi_{x,k}(\eta,\vartheta) = |\psi(\eta)-x|\left<k,\partial_{\vartheta_i}\Theta(\vartheta)\right>,\quad i=1,2, \]
with respect to $\vartheta$ imply that $k$ is normal to the tangent space of $S^2$ in the point $\Theta(\vartheta)$ at a critical point $(\eta,\vartheta)$ of $\Phi_{x,k}$, that is $\Theta(\vartheta)=\pm\frac{k}{|k|}$. The optimality conditions
\[ 0 = \partial_{\eta_i}\Phi_{x,k}(\eta,\vartheta) = \left<\partial_{\eta_i}\psi(\eta),\pm|k|\frac{\psi(\eta)-x}{|\psi(\eta)-x|}+k\right>,\quad i=1,2, \]
with respect to $\eta$ then imply for a critical point $(\eta,\vartheta)$ of $\Phi_{x,k}$ that the projections of the two vectors $-\frac{\psi(\eta)-x}{|\psi(\eta)-x|}$ and $\Theta(\vartheta)=\pm\frac k{|k|}$ on the tangent space of $\partial\Omega$ at $\psi(\eta)$ coincide. Since both vectors have unit length, this means that up to the sign of the normal component they have to be equal. Additionally, we use that, because of the cut-off term $\phi(\psi(\eta)+|\psi(\eta)-x|\Theta(\vartheta))$ in the integrand, the critical points at which the vector $\Theta(\vartheta)$ points outwards the domain $\Omega$ at $\psi(\eta)$ do not contribute to the integral. Therefore, a relevant critical point $(\eta,\theta)$ is such that $\Theta(\vartheta)$ is pointing inwards at $\psi(\eta)$ and since $-\frac{\psi(\eta)-x}{|\psi(\eta)-x|}$ is also pointing inwards, we are left with the two critical points $(\eta^{(\ell)},\vartheta^{(\ell)})$ given by
\[ \Theta(\vartheta^{(\ell)}) = (-1)^\ell\frac k{|k|} = -\frac{\psi(\eta^{(\ell)})-x}{|\psi(\eta^{(\ell)})-x|},\quad\ell=1,2. \]
In particular, we have $\Phi_{x,k}(\eta^{(\ell)},\vartheta^{(\ell)})=0$.

For the second derivatives of $\Phi_{x,k}$ at the critical points, we find
\begin{align*}
\partial_{\eta_i\eta_j}\Phi_{x,k}(\eta^{(\ell)},\vartheta^{(\ell)}) &= \frac{(-1)^\ell|k|}{|\psi(\eta)-x|}\left(\left<\partial_{\eta_i}\psi(\eta^{(\ell)}),\partial_{\eta_i}\psi(\eta^{(\ell)})\right>-\left<\partial_{\eta_i}\psi(\eta^{(\ell)}),\frac k{|k|}\right>\left<\partial_{\eta_j}\psi(\eta^{(\ell)}),\frac k{|k|}\right>\right), \\
\partial_{\vartheta_i\vartheta_j}\Phi_{x,k}(\eta^{(\ell)},\vartheta^{(\ell)}) &= |\psi(\eta^{(\ell)})-x|\left<k,\partial_{\vartheta_i\vartheta_j}\Theta(\vartheta^{(\ell)})\right>, \\
\partial_{\eta_i\vartheta_j}\Phi_{x,k}(\eta^{(\ell)},\vartheta^{(\ell)}) &= 0.
\end{align*}

For the determinants of the derivatives with respect to $\eta$ and $\vartheta$, we obtain (this can be readily checked for parametrisations $\psi$ and $\Theta$ corresponding to normal coordinates at the points $(\psi(\eta^{(\ell)}),\Theta(\vartheta^{(\ell)}))$)
\begin{align*}
\det(\partial_{\eta_i\eta_j}\Phi_{x,k}(\eta^{(\ell)},\vartheta^{(\ell)}))_{i,j=1}^2 &= \frac{\left<k,\nu(\psi(\eta^{(\ell)}))\right>^2}{|\psi(\eta^{(\ell)})-x|^2}\det(\d\psi^{\mathrm T}(\eta^{(\ell)})\d\psi(\eta^{(\ell)})), \\
\det(\partial_{\vartheta_i\vartheta_j}\Phi_{x,k}(\eta^{(\ell)},\vartheta^{(\ell)}))_{i,j=1}^2 &= |k|^2\det(\d\Theta^{\mathrm T}(\vartheta^{(\ell)})\d\Theta(\vartheta^{(\ell)})),
\end{align*}
where $\nu:\partial\Omega\to S^2$ denotes the outer unit normal vector field on $\partial\Omega$.

Therefore, the stationary phase method, see for example \cite[Theorem 7.7.5]{Hoe03}, implies for $x\in\Omega_\varepsilon$, $k\in\R^3$, and $\mu>0$ that we have asymptotically for $\mu\to\infty$
\begin{align*}
g(x,\mu k)&=\frac c{16\pi^2}\int_{U\times V}\e^{-2\kappa_\infty|\psi(\eta)-x|}\phi(\psi(\eta)+|\psi(\eta)-x|\Theta(\vartheta))\e^{\i\mu\Phi_{x,k}(\eta,\vartheta)} \\
&\qquad\qquad\times\sqrt{\det(\d\psi^{\mathrm T}(\eta)\d\psi(\eta))\det(\d\Theta^{\mathrm T}(\vartheta)\d\Theta(\vartheta))}\d(\eta,\vartheta) \\
&= \frac c{4|k|\mu^2}\sum_{\ell=1}^2\frac{|\psi(\eta^{(\ell)})-x|}{|\left<k,\nu(\psi(\eta^{(\ell)}))\right>|}\e^{-2\kappa_\infty|\psi(\eta^{(\ell)})-x|}+\Ord\left(\frac1{\mu^3}\right).
\end{align*}
Thus, $g$ is of the form
\[ g(x,k)=g_{-2}(x,k)+\Ord(|k|^{-3}) \]
with $g_{-2}(x,\cdot)$ being a positive function which is homogeneous of order $-2$.
\end{itemize}

Therefore, a parameterix of the pseudodifferential operator $\mathcal T$ on $L^2(M)$ is an elliptic pseudodifferential operator of order $2$ and thus has, according to \cite[Theorem 15.2]{Shu87}, eigenvalues which grow as $n^{\frac23}$. Consequently, the eigenvalues of $\mathcal T$ and thus also those of $\PO^*\PO$ decay as $n^{-\frac23}$.
\end{proof}

To estimate the eigenvalues of the term $\POb^*\POb$ in $\PO^*\PO$, we show with Mercer's theorem that the operator is trace class.

\begin{lemma}\label{thEvPer}
Let $\kappa$ be a weak attenuation coefficient, $\Omega\subset\R^3$ be a bounded, convex domain with smooth boundary, and $\varepsilon>0$. We define the operator $\POb:L^2(\Omega_\varepsilon)\to L^2(\Omega_\varepsilon)$ by \autoref{eqPOb}. Then, we have that
\begin{equation}\label{eqEvPer}
\lim_{n\to\infty}n\lambda_n(\POb^*\POb)=0.
\end{equation}
\end{lemma}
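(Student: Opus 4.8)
The plan is to prove that $\POb^*\POb$ is a trace-class operator, so that $\sum_{n=1}^\infty\lambda_n(\POb^*\POb)<\infty$, and then to deduce \autoref{eqEvPer} from the elementary fact that the $n$-th term of a convergent series with non-negative, non-increasing summands, when multiplied by $n$, tends to zero.

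First I would combine the definition \autoref{eqPOb} of $\POb$ with the corresponding adjoint (computed as in \autoref{eqPOadj}) to see that $\POb^*\POb:L^2(\Omega_\varepsilon)\to L^2(\Omega_\varepsilon)$ is an integral operator whose kernel is continuous on $\overline{\Omega_\varepsilon}\times\overline{\Omega_\varepsilon}$: the factors $|\xi-x|^{-1}$ and $|\xi-y|^{-1}$ are bounded by $\varepsilon^{-1}$, and the $\omega$-integral converges since $\kappa_*\in L^2(\R)$ by \autoref{deWeakAttenuation}. As $\POb^*\POb$ is moreover positive and self-adjoint, Mercer's theorem identifies its trace with the integral of the kernel over the diagonal,
\[ \tr(\POb^*\POb)=\int_{\Omega_\varepsilon}\frac1{32\pi^3}\int_{-\infty}^\infty\int_{\partial\Omega}\frac{\e^{-2\kappa_\infty|\xi-x|}}{|\xi-x|^2}\left|\e^{\i\kappa_*(\omega)|\xi-x|}-1\right|^2\d S(\xi)\d\omega\d x. \]

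Next I would estimate this triple integral. Since $\kappa_*$ is bounded and $|\xi-x|$ stays bounded over the bounded domain $\Omega$, the estimate \autoref{eqExponential} provides a constant $C>0$ with $|\e^{\i\kappa_*(\omega)|\xi-x|}-1|^2\le C|\kappa_*(\omega)|^2|\xi-x|^2$. This cancellation removes the singular factor $|\xi-x|^{-2}$, and together with $\e^{-2\kappa_\infty|\xi-x|}\le1$ (recall $\kappa_\infty\ge0$) it gives
\[ \tr(\POb^*\POb)\le\frac{C\,|\partial\Omega|\,|\Omega_\varepsilon|}{32\pi^3}\int_{-\infty}^\infty|\kappa_*(\omega)|^2\d\omega=\frac{C\,|\partial\Omega|\,|\Omega_\varepsilon|}{32\pi^3}\|\kappa_*\|_2^2<\infty, \]
because $\kappa_*\in L^2(\R)$. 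Hence $\POb^*\POb$ is trace class and $\sum_{n=1}^\infty\lambda_n(\POb^*\POb)<\infty$.

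Finally, using that the eigenvalues are non-negative and ordered decreasingly, for every $n\in\N$ each of the terms $\lambda_{\lfloor n/2\rfloor+1}(\POb^*\POb),\dots,\lambda_n(\POb^*\POb)$ is at least $\lambda_n(\POb^*\POb)$, so that
\[ \frac n2\,\lambda_n(\POb^*\POb)\le\sum_{k=\lfloor n/2\rfloor+1}^n\lambda_k(\POb^*\POb). \]
The right-hand side is a tail of the convergent series $\sum_k\lambda_k(\POb^*\POb)$ and therefore tends to zero as $n\to\infty$; this yields $n\lambda_n(\POb^*\POb)\to0$, which is \autoref{eqEvPer}. The estimate itself is of the same routine type as the bound on $\POKc$ in the proof of \autoref{thPO2compact}; the only point requiring a little care is the verification of the hypotheses of Mercer's theorem, namely the continuity of the kernel of $\POb^*\POb$ (one may equivalently bypass Mercer by observing that the above bound is exactly $\|\POb\|_{\mathrm{HS}}^2$, so that $\POb$ is Hilbert--Schmidt).
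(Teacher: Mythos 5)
Your proposal is correct and follows essentially the same route as the paper: establish continuity of the kernel of $\POb^*\POb$, apply Mercer's theorem to get $\sum_{n=1}^\infty\lambda_n(\POb^*\POb)<\infty$, and conclude from the fact that a non-negative, decreasing, summable sequence satisfies $n\lambda_n\to0$ (the paper cites Abel's theorem here, whereas you prove this via the tail-sum estimate $\tfrac n2\lambda_n\le\sum_{k=\lfloor n/2\rfloor+1}^n\lambda_k$, which is fine). Your closing remark that Mercer can be bypassed by noting that the diagonal integral equals $\|\POb\|_{\mathrm{HS}}^2$, so that $\POb$ is Hilbert--Schmidt and hence $\POb^*\POb$ trace class, is a valid minor simplification but does not change the substance of the argument.
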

\begin{proof}
Using the definition \autoref{eqPOb} of $\POb$, we find that
\[ \POb^*\POb h(x) = \int_{\Omega_\varepsilon}R_\kappa(x,y)h(y)\d y \]
with the integral kernel
\begin{align*}
R_\kappa(x,y) &= \frac1{32\pi^3}\int_{-\infty}^\infty\int_{\partial\Omega}r_\kappa(\xi,\omega,x,y)\d S(\xi)\d\omega, \\
r_\kappa(\xi,\omega,x,y) &= \frac{\e^{\i\frac\omega c(|\xi-y|-|\xi-x|)}}{|\xi-y||\xi-x|}\e^{-\kappa_\infty(|\xi-y|+|\xi-x|)}(\e^{\i\kappa_*(\omega)|\xi-y|}-1)(\e^{-\i\overline{\kappa_*(\omega)}|\xi-x|}-1).
\end{align*}
Since for every bounded set $D\subset\C$, there exists a constant $C$ such that
\[ |\e^z-1| \le C|z|\quad\text{for all}\quad z\in D, \]
we find a constant $\tilde C>0$ such that the integrand $r_\kappa$ is uniformly estimated by an integrable function:
\[ |r_\kappa(\xi,\omega,x,y)| \le \tilde C|\kappa_*(\omega)|^2\quad\text{for all}\quad x,y\in\overline{\Omega_\varepsilon},\;\xi\in\partial\Omega,\;\omega\in\R. \]
Taking now an arbitrary sequence $(x_k,y_k)_{k\in\N}\subset\Omega_\varepsilon$ converging to an element $(x,y)\in\overline{\Omega_\varepsilon}$, we get with the dominated convergence theorem and the continuity of $r_\kappa$ that
\[ \lim_{k\to\infty}R_\kappa(x_k,y_k) = \frac1{32\pi^3}\int_{-\infty}^\infty\int_{\partial\Omega}\lim_{k\to\infty}r_\kappa(\xi,\omega,x_k,y_k)\d S(\xi)\d\omega = R_\kappa(x,y). \]
Thus, $R_\kappa$ is continuous and therefore Mercer's theorem, see for example \cite[Chapter~III, \S5 and \S9]{CouHil53}, implies that
\[ \sum_{n=1}^\infty\lambda_n(\POb^*\POb)<\infty. \]
Since $(\lambda_n(\POb^*\POb))_{n=1}^\infty$ is by definition a decreasing sequence, Abel's theorem, see for example \cite[§173]{Har21}, gives us \autoref{eqEvPer}.
\end{proof}

From the decay rates of the singular values of $\POa$ and $\POb$, we can directly deduce the decay rate of the perturbation $\PO^*\PO-\POa^*\POa$.
\begin{lemma}\label{thEvDif}
Let $\kappa$ be a weak attenuation coefficient, $\Omega\subset\R^3$ be a bounded, convex domain with smooth boundary, and $\varepsilon>0$.

Then, the operator
\begin{equation}\label{eqEvDifQ}
\mathcal Q_\kappa:L^2(\Omega_\varepsilon)\to L^2(\Omega_\varepsilon),\quad \mathcal Q_\kappa = \PO^*\PO-\POa^*\POa
\end{equation}
with $\POa:L^2(\Omega_\varepsilon)\to L^2(\Omega_\varepsilon)$ being defined by \autoref{eqPOa} fulfils
\begin{equation}\label{eqEvDif}
\lim_{n\to\infty}n^{\frac56}|\lambda_n(\mathcal Q_\kappa)| = 0.
\end{equation}
Here $(\lambda_n(\mathcal Q_\kappa))_{n\in\N}$ denotes the eigenvalues of $\mathcal Q_\kappa$, sorted in decreasing order: $|\lambda_{n+1}(\mathcal Q_\kappa)|\le|\lambda_n(\mathcal Q_\kappa)|$ for all $n\in\N$.
\end{lemma}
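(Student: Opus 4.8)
The plan is to exploit that $\mathcal Q_\kappa$ is self-adjoint and decomposes into three pieces whose singular values are already controlled by the two preceding lemmas, and then to recombine them using the classical Weyl--Ky Fan inequalities for singular values of sums and products of compact operators. Writing $\PO=\POa+\POb$ gives
\[ \mathcal Q_\kappa = \PO^*\PO-\POa^*\POa = \POa^*\POb+\POb^*\POa+\POb^*\POb, \]
and since $(\POa^*\POb)^*=\POb^*\POa$ and $\POb^*\POb\ge0$, the operator $\mathcal Q_\kappa$ is self-adjoint. Hence $|\lambda_n(\mathcal Q_\kappa)|$ equals the $n$-th singular value $s_n(\mathcal Q_\kappa)=\sqrt{\lambda_n(\mathcal Q_\kappa^*\mathcal Q_\kappa)}$, so it suffices to prove $n^{\frac56}s_n(\mathcal Q_\kappa)\to0$.

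First I would translate the eigenvalue asymptotics into singular-value decay. \autoref{thEvNonAtt} yields $s_n(\POa)=\sqrt{\lambda_n(\POa^*\POa)}=\Ord(n^{-\frac13})$, while \autoref{thEvPer} gives $n\lambda_n(\POb^*\POb)\to0$, that is $s_n(\POb)=\sqrt{\lambda_n(\POb^*\POb)}=o(n^{-\frac12})$. For the two cross terms I would invoke the product inequality $s_{2k-1}(AB)\le s_k(A)\,s_k(B)$, which gives
\[ s_{2k-1}(\POa^*\POb) \le s_k(\POa)\,s_k(\POb) = \Ord(k^{-\frac13})\,o(k^{-\frac12}) = o(k^{-\frac56}), \]
and after reindexing $s_n(\POa^*\POb)=o(n^{-\frac56})$; the identical bound holds for $\POb^*\POa$. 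The third term is even smaller, $s_n(\POb^*\POb)=\lambda_n(\POb^*\POb)=o(n^{-1})=o(n^{-\frac56})$. Finally the subadditivity inequality $s_{3n-2}(A+B+C)\le s_n(A)+s_n(B)+s_n(C)$ gives $s_{3n-2}(\mathcal Q_\kappa)=o(n^{-\frac56})$, and one more reindexing (with $m=3n-2$, so $n\sim m/3$) produces $s_m(\mathcal Q_\kappa)=o(m^{-\frac56})$, which is exactly \autoref{eqEvDif}.

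Honestly, the substantive work has already been done in \autoref{thEvNonAtt} and \autoref{thEvPer}; what remains is essentially a packaging argument. The only point requiring genuine care is the index bookkeeping in the Weyl--Ky Fan inequalities: one must track the shifts $2k-1$ and $3n-2$ when passing from products and sums back to a clean $s_m=o(m^{-\frac56})$ estimate, and verify that the little-$o$ relations survive the reindexing. They do, since $k\sim n/2$ and $n\sim m/3$ alter only multiplicative constants, and the exponent arithmetic $-\tfrac13-\tfrac12=-\tfrac56$ is exactly what makes the cross terms the slowest-decaying contribution. I would cite a standard reference for these singular-value inequalities (for instance Gohberg--Krein) rather than reprove them.
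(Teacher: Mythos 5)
Your proof is correct and follows essentially the same route as the paper: the same decomposition $\mathcal Q_\kappa=\POa^*\POb+\POb^*\POa+\POb^*\POb$, the same Gohberg--Krein/Ky Fan inequalities for singular values of sums and products, and the same exponent arithmetic $-\tfrac13-\tfrac12=-\tfrac56$ fed by \autoref{thEvNonAtt} and \autoref{thEvPer}. The only cosmetic difference is that the paper first bounds the eigenvalues of the self-adjoint cross-term $\POb^*\POa+\POa^*\POb$ and then adds $\POb^*\POb$, whereas you apply a three-term subadditivity to $\mathcal Q_\kappa$ directly via its self-adjointness; both versions of the index bookkeeping are sound.
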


\begin{proof}
We start with the positive semi-definite operator $\PO^*\PO$. We split $\PO$ as in the proof of \autoref{thPOBdd} in $\PO=\POa+\POb$ with $\POb$ given by \autoref{eqPOb}. Then, we can write $\mathcal Q_\kappa$ in the form
\[ \mathcal Q_\kappa=\PO^*\PO-\POa^*\POa = \POb^*\POa+\POa^*\POb+\POb^*\POb. \]

To estimate the eigenvalues of the operator $\POb^*\POa+\POa^*\POb$, we use that for all $m,n\in\N$ the inequalities
\begin{align*}
|\lambda_{m+n-1}(\POb^*\POa+\POa^*\POb)| &\le s_m(\POa^*\POb)+s_n(\POa^*\POb)\quad\text{and} \\
s_{m+n-1}(\POa^*\POb) &\le s_m(\POa)s_n(\POb)
\end{align*}
hold, see for example \cite[Chapter II.2.3, Corollary 2.2]{GohKre69}, where $s_n(T)=\sqrt{\lambda_n(T^*T)}$ denotes the singular values of a compact operator~$T$ sorted in decreasing order: $s_{n+1}(T)\le s_n(T)$ for all $n\in\N$. Here, we used that $s_n(T)=s_n(T^*)$, see for example \cite[Chapter II.2.2]{GohKre69}. Inserting the decay rates \autoref{eqEvNonAtt} and \autoref{eqEvPer} for $(s_n(\POa))_{n=1}^\infty$ and $(s_n(\POb))_{n=1}^\infty$ into these inequalities, we find that
\[ \lim_{n\to\infty}n^{\frac56}|\lambda_n(\POb^*\POa+\POa^*\POb)| = 0. \]
Estimating, again with \cite[Chapter II.2.3, Corollary 2.2]{GohKre69}, the eigenvalues of the sum $\mathcal Q_\kappa$ of the two operators $\POb^*\POa+\POa^*\POb$ and $\POb^*\POb$, we find that
\[ |\lambda_{m+n-1}(\mathcal Q_\kappa)| \le |\lambda_m(\POb^*\POa+\POa^*\POb)|+\lambda_n(\POb^*\POb) \]
for all $m,n\in\N$, which yields with the behaviour \autoref{eqEvPer} of the eigenvalues of $\POb^*\POb$ the result \autoref{eqEvDif}.
\end{proof}

Combining \autoref{thEvNonAtt} and \autoref{thEvDif}, we obtain that the singular values of the photoacoustic operator $\PO$ decay as in the unperturbed case.
\begin{theorem}
Let $\PO:L^2(\Omega_\varepsilon)\to L^2(\R\times\partial\Omega)$ be the integrated photoacoustic operator of a weak attenuation coefficient $\kappa$ for some bounded, convex domain $\Omega\subset\R^3$ with smooth boundary and some $\varepsilon>0$. Then, there exist constants $C_1,C_2>0$ such that we have
\begin{equation}\label{eqEvPO}
C_1n^{-\frac23}\le\lambda_n(\PO^*\PO)\le C_2n^{-\frac23}\quad\text{for all}\quad n\in\N.
\end{equation}
\end{theorem}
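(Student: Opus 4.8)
The plan is to treat $\PO^*\PO$ as a perturbation of the constant-attenuation operator $\POa^*\POa$ and to combine the two results just established. By \autoref{thEvNonAtt} we have the two-sided bound $C_1n^{-\frac23}\le\lambda_n(\POa^*\POa)\le C_2n^{-\frac23}$, and by \autoref{thEvDif} the perturbation $\mathcal Q_\kappa=\PO^*\PO-\POa^*\POa$ satisfies $\lim_{n\to\infty}n^{\frac56}|\lambda_n(\mathcal Q_\kappa)|=0$. Since $\frac56>\frac23$, the eigenvalues of $\mathcal Q_\kappa$ decay strictly faster than those of $\POa^*\POa$, so they cannot change the leading $n^{-\frac23}$ behaviour; the whole argument consists in making this comparison quantitative.

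First I would record the Weyl-type inequality for eigenvalues of sums of self-adjoint compact operators in the signed decreasing ordering, which is the same tool from \cite[Chapter II.2.3, Corollary 2.2]{GohKre69} already used in the proof of \autoref{thEvDif}: for self-adjoint compact $A,B$ one has $\lambda_{m+n-1}(A+B)\le\lambda_m(A)+\lambda_n(B)$ for all $m,n\in\N$. Since $\mathcal Q_\kappa$ is self-adjoint but not sign-definite, I would use the elementary observation that its $n$-th signed decreasing eigenvalue is bounded by its $n$-th singular value, which for a self-adjoint operator equals $|\lambda_n(\mathcal Q_\kappa)|$ in the absolute-value ordering of \autoref{thEvDif}; the identical bound applies to $-\mathcal Q_\kappa$.

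For the upper bound I would apply the inequality with $A=\POa^*\POa$, $B=\mathcal Q_\kappa$ and $m=n=k$, getting
\[ \lambda_{2k-1}(\PO^*\PO)\le\lambda_k(\POa^*\POa)+|\lambda_k(\mathcal Q_\kappa)|\le C_2k^{-\frac23}+o(k^{-\frac56}). \]
Because $k^{-\frac56}=o(k^{-\frac23})$, the right-hand side is $\le Ck^{-\frac23}$ for all large $k$, and passing from the index $2k-1$ to a general index via the monotonicity $\lambda_n(\PO^*\PO)\le\lambda_{2k-1}(\PO^*\PO)$ for $2k-1\le n$ yields $\lambda_n(\PO^*\PO)\le C_2'n^{-\frac23}$ for all large $n$. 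For the lower bound I would reverse the roles, writing $\POa^*\POa=\PO^*\PO+(-\mathcal Q_\kappa)$ and applying the same inequality to recover
\[ \lambda_k(\PO^*\PO)\ge\lambda_{2k-1}(\POa^*\POa)-|\lambda_k(\mathcal Q_\kappa)|\ge C_1(2k-1)^{-\frac23}-o(k^{-\frac56})\ge\tilde C_1k^{-\frac23} \]
for all large $k$, again because the correction is of smaller order. In particular this forces $\lambda_k(\PO^*\PO)>0$ for all large $k$, hence, the sequence being decreasing, for every $k$.

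Finally I would absorb the finitely many small indices into the constants: the two-sided asymptotic bounds hold for $n\ge n_0$, and for $1\le n<n_0$ the positivity and finiteness of the (finitely many) eigenvalues allow me to shrink $C_1$ and enlarge $C_2$ so that \autoref{eqEvPO} holds for every $n\in\N$. I expect no genuine obstacle here; the only step requiring care is the bookkeeping between the two orderings (signed decreasing versus absolute value) together with the index shift $n\leftrightarrow 2k-1$, which must be carried out explicitly so that the resulting constants are uniform in $n$.
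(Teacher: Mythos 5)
Your proposal is correct and takes essentially the same route as the paper's proof: the same decomposition $\PO^*\PO=\POa^*\POa+\mathcal Q_\kappa$, the same Weyl-type singular-value inequalities from \cite[Chapter II.2.3, Corollary 2.2]{GohKre69}, and the same two lemmas, with the index shift and the absorption of small indices into the constants (which the paper leaves implicit) carried out explicitly. Nothing further is needed.
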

\begin{proof}
Defining again the operators $\POa$, see \autoref{eqPOa}, and $\mathcal Q_\kappa$, see \autoref{eqEvDifQ}, we know from \cite[Chapter II.2.3, Corollary 2.2]{GohKre69} for all $m,n\in\N$ that
\[ \lambda_{m+n-1}(\PO^*\PO) \le \lambda_m(\POa^*\POa^*)+|\lambda_n(\mathcal Q_\kappa)| \]
and
\[ \lambda_{m+n-1}(\POa^*\POa)-|\lambda_n(\mathcal Q_\kappa)| \le \lambda_m(\PO^*\PO^*). \]
Therefore, \autoref{thEvNonAtt} and \autoref{thEvDif} imply bounds of the form \autoref{eqEvPO}.
\end{proof}


\appendix

\section{Eigenvalues of Integral Operators of Hilbert--Schmidt Type}
In this section, we derive estimates for the eigenvalues of operators $T$ of the form
\begin{equation}\label{eqIntegralOp}
T:L^2(U) \to L^2(U),\quad (Th)(x) = \int_{U} F(x,y)h(y)\d y
\end{equation}
on a bounded, open set $U\subset\R^m$ with an Hermitian integral kernel $F\in C(\bar U\times\bar U)$ (that is, $F(x,y)=\overline{F(y,x)}$). In particular, such an operator $T$ is a self-adjoint Hilbert--Schmidt operator, and we want to additionally assume that $T$ is positive semi-definite. So, the eigenvalues $(\lambda_n(T))_{n\in\N}$ of $T$ are non-negative and we enumerate them in decreasing order:
\[ 0\le\lambda_{n+1}(T)\le\lambda_n(T)\quad\text{for all}\quad n\in\N. \]

To obtain the asymptotic decay rate of the eigenvalues of such an operator $T$, we proceed as in~\cite{ChaHa99} where a characterisation for a decay rate of the form $\lambda_n(T)=\Ord(n^{-k})$ was presented in terms of an upper estimate on the derivatives of the kernel $F$. The extension to an exponential decay rate is rather straightforward.

First, we show that when approximating an operator $T_1$ by a finite rank operator $T_2$, we can estimate the eigenvalues above the rank of the finite rank operator $T_2$ in terms of the supremum norm of the difference of their kernels, see for example \cite[Satz II]{Wey12}.

\begin{lemma}\label{thFiniteRankPerturbation}
Let $U\subset\R^m$ be a bounded, open set, $T_i:L^2(U)\to L^2(U)$, $i=1,2$, be two integral operators with Hermitian integral kernels $F_i\in C(\bar U\times\bar U)$. Moreover, let $T_1$ be positive semi-definite and $T_2$ have finite rank $r\in\N_0$.

Then, the eigenvalues $(\lambda_n(T_1))_{n\in\N}$ of $T_1$ (sorted in decreasing order) satisfy
\begin{equation*}
\sum_{n=r+1}^\infty\lambda_n(T_1)\le(2r+1)|U|\|F_1-F_2\|_\infty.
\end{equation*}
\end{lemma}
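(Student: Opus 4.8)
The plan is to convert the eigenvalue tail into a difference of traces and to control it by the sup-norm of $F_1-F_2$ through two elementary ingredients: the identity expressing the trace of an operator with continuous kernel as the integral of its diagonal, and the Ky Fan variational characterisation of the sum of the largest eigenvalues. Since $F_1$ is Hermitian and continuous on the bounded set $\bar U$, the operator $T_1$ is a self-adjoint, positive semi-definite Hilbert--Schmidt operator; by Mercer's theorem it is in fact trace class and
\[ \sum_{n=1}^\infty\lambda_n(T_1) = \tr(T_1) = \int_U F_1(x,x)\d x. \]
Hence the tail I must bound can be written as
\[ \sum_{n=r+1}^\infty\lambda_n(T_1) = \tr(T_1)-\sum_{n=1}^r\lambda_n(T_1), \]
and the whole problem reduces to producing a sufficiently good \emph{lower} bound for the sum of the $r$ largest eigenvalues of $T_1$.

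For this lower bound I would exploit the finite-rank approximation $T_2$. Because $F_2$ is Hermitian, $T_2$ is self-adjoint of rank $r$, so its range admits an orthonormal basis $u_1,\dots,u_r$ of eigenfunctions with nonzero eigenvalues $\mu_i$, which are continuous since $u_i=\mu_i^{-1}T_2u_i$ and $F_2$ is continuous. The Ky Fan maximum principle then gives, for this particular orthonormal frame,
\[ \sum_{n=1}^r\lambda_n(T_1)\ge\sum_{i=1}^r\left<T_1u_i,u_i\right> = \tr(T_2)+\sum_{i=1}^r\left<(T_1-T_2)u_i,u_i\right>, \]
where I used $\sum_i\left<T_2u_i,u_i\right>=\sum_i\mu_i=\tr(T_2)$. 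The perturbation terms are controlled purely by the sup-norm: for any unit vector $u$, writing $G=F_1-F_2$ and applying the Cauchy--Schwarz bound $\int_U|u|\le|U|^{\frac12}$, one gets
\[ \left|\left<(T_1-T_2)u,u\right>\right| = \left|\int_U\int_U G(x,y)u(y)\overline{u(x)}\d y\d x\right|\le\|G\|_\infty\Big(\int_U|u|\Big)^2\le|U|\,\|F_1-F_2\|_\infty. \]
Summing over $i=1,\dots,r$ shows that the perturbation contributes at least $-r|U|\|F_1-F_2\|_\infty$.

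Combining the two displays, and using $\tr(T_1)-\tr(T_2)=\int_U(F_1-F_2)(x,x)\d x$ (valid since $T_2$, being finite rank with continuous kernel, also has its trace equal to the diagonal integral), yields
\[ \sum_{n=r+1}^\infty\lambda_n(T_1)\le\int_U(F_1-F_2)(x,x)\d x+r|U|\,\|F_1-F_2\|_\infty\le(r+1)|U|\,\|F_1-F_2\|_\infty, \]
because $|(F_1-F_2)(x,x)|\le\|F_1-F_2\|_\infty$; this is in fact slightly stronger than the claimed bound $(2r+1)|U|\|F_1-F_2\|_\infty$. The main obstacle is not the final estimation, which is elementary, but justifying the two trace-diagonal identities: Mercer's theorem supplies $\tr(T_1)=\int_U F_1(x,x)\d x$ for the positive semi-definite kernel, while for $T_2$ one needs the continuity of its eigenfunctions to write $\tr(T_2)=\int_U F_2(x,x)\d x$; one must also invoke the Ky Fan principle in the correct form, as the supremum over orthonormal $r$-frames of $\sum_i\left<T_1u_i,u_i\right>$.
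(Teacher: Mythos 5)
Your argument is correct, and it takes a genuinely different route from the paper's. The paper proves the lemma through the min-max identity $|\lambda_m(T)|=\min_{\Rank(A)\le m-1}\|T-A\|$, which yields the Weyl-type shift inequality $\lambda_{n+r}(T_1)\le|\lambda_n(T_1-T_2)|$; summing over $n$, it then splits the spectrum of $T_1-T_2$ into the at most $r$ negative eigenvalues, each bounded by $\|T_1-T_2\|\le|U|\|F_1-F_2\|_\infty$, plus the remaining ones, whose sum it converts into $\int_U(F_1(x,x)-F_2(x,x))\d x$ via a Mercer expansion of $T_1-T_2$; this produces the constant $2r+1$. You instead apply Mercer only to $T_1$, identifying the full eigenvalue sum with $\int_U F_1(x,x)\d x$, and you lower-bound $\sum_{n=1}^r\lambda_n(T_1)$ by Ky Fan's maximum principle (the same tool the paper cites from Gohberg--Krein in the proof of the subsequent lemma) evaluated at the orthonormal eigenframe of $T_2$, controlling the cross terms by the elementary quadratic-form estimate $|\left<(T_1-T_2)u,u\right>|\le|U|\|F_1-F_2\|_\infty$ for unit vectors $u$. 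Your route buys two things. First, the sharper constant $r+1$ in place of $2r+1$, which is harmless for everything the paper does with the lemma afterwards. Second, Mercer's theorem is invoked only where its hypotheses literally hold: for the positive semi-definite operator $T_1$, and for the finite-rank operator $T_2$, where you justify $\tr(T_2)=\int_U F_2(x,x)\d x$ by the continuity of the eigenfunctions $u_i=\mu_i^{-1}T_2u_i$. The paper, by contrast, applies Mercer to $T_1-T_2$, which is not positive semi-definite; that step is repairable (the negative part of $T_1-T_2$ has rank at most $r$ and a continuous kernel, so one can add it back and apply Mercer to the corrected operator), but your arrangement sidesteps the issue entirely.
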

\begin{proof}
The min-max theorem (see for example \cite[Chapter II.2.3]{GohKre69}) states that for every self-adjoint,
compact operator $T$ and every fixed $m\in\N$
\begin{equation}
\label{eq:GohKre}
|\lambda_m(T)| = \min_{\Rank(A) \le m-1}\|T-A\|,
\end{equation}
where the minimum is taken over all operators $A:L^2(U)\to L^2(U)$ with rank less than or equal to $m-1$.

Let us fix $n\in\N$ now. Applying \autoref{eq:GohKre} with $T=T_1-T_2$ shows that there exists an
operator $A$ with $\Rank(A) \leq n-1$ such that
\begin{equation}
\label{eq:T1T2A}
|\lambda_n(T_1-T_2)|=\|T_1-T_2-A\|.
\end{equation}
Because $\Rank(T_2) \leq r$ and $\Rank(A) \leq n-1$,
$\Rank(T_2+A) \leq n+r-1$, and we therefore have
\begin{equation}\label{eqT1T2A1}
\|T_1-T_2-A\| \ge \min_{\Rank(\tilde A) \leq n+r-1}\|T_1-\tilde A\|.
\end{equation}
Using \autoref{eq:GohKre} with $T=T_1$ and $m=n+r$ we find that
\begin{equation}\label{eqT1T2A2}
\min_{\Rank(\tilde A) \leq n+r-1}\|T_1-\tilde A\|=|\lambda_{n+r}(T_1)|=\lambda_{n+r}(T_1),
\end{equation}
since $T_1$ is positive semi-definite. Combining the three relations~\autoref{eq:T1T2A}, \autoref{eqT1T2A1}, and \autoref{eqT1T2A2}, we get
\[ \lambda_{n+r}(T_1) \le |\lambda_n(T_1-T_2)|\quad\text{for all}\quad r,n\in\N. \]
Taking the sum over all $n \in \N$, we get
\begin{equation}
\label{eq:nt1t1}
\sum_{n=r+1}^\infty\lambda_n(T_1) \le \sum_{n=1}^\infty|\lambda_n(T_1-T_2)|.
\end{equation}

The eigenvalues of $T_1-T_2$ do not need to be all non-negative, however, since $T_2$ has rank at most~$r$, the operator $T_1-T_2$ cannot have more than $r$ negative eigenvalues. Moreover, their norm is bounded by
\begin{equation}
\label{eq:ast}
|\lambda_n(T_1-T_2)|\le \|T_1-T_2\|\le \|F_1-F_2\|_{L^2(U\times U)}\le|U|\|F_1-F_2\|_\infty.
\end{equation}
Thus, we can estimate the sum in \autoref{eq:nt1t1} by
\begin{equation}
\label{eq:nt1t1b}
\begin{split}
\sum_{n=1}^\infty|\lambda_n(T_1-T_2)| &= \sum_{n=1}^\infty\lambda_n(T_1-T_2)+2\sum_{\lambda_n(T_1-T_2)<0}|\lambda_n(T_1-T_2)| \\
&\le \sum_{n=1}^\infty\lambda_n(T_1-T_2)+2r|U|\|F_1-F_2\|_\infty.
\end{split}
\end{equation}

Moreover, choosing an orthonormal eigenbasis $(\psi_n)_{n=1}^\infty\subset L^2(U)$ of the compact, self-adjoint operator $T_1-T_2$, we get with Mercer's theorem, see for example \cite[Chapter~III, §5 and §9]{CouHil53}, that
\[ F_1(x,y)-F_2(x,y) = \sum_{n=1}^\infty\lambda_n(T_1-T_2)\psi_n(x)\overline{\psi_n(y)}, \]
and therefore
\begin{equation}\label{eq:nt1t1c}
\sum_{n=1}^\infty\lambda_n(T_1-T_2) = \int_U(F_1(x,x)-F_2(x,x))\d x \le |U|\|F_1-F_2\|_\infty.
\end{equation}

Combining \autoref{eq:nt1t1}, \autoref{eq:nt1t1b}, and \autoref{eq:nt1t1c} gives
\begin{equation*}
\sum_{n=r+1}^\infty\lambda_n(T_1) \le (2r+1)|U|\|F_1-F_2\|_\infty.
\end{equation*}
\end{proof}

Thus, approximating the kernel in our integral operator by one of its Taylor polynomials, we get a convergence rate for the eigenvalues depending on the approximation error of the Taylor polynomial. To improve this estimate, we first subdivide the domain $U$ in smaller domains, so that the approximation error of the Taylor polynomial is smaller.

Regarding an upper bound for the eigenvalues, it is indeed enough to keep the subdomains along the diagonal of $U\times U$, see \cite[Lemma 1]{ChaHa99}.

\begin{lemma}\label{thDiagonalBlocks}
Let $U\subset\R^m$ be a bounded, open set and $T_1:L^2(U)\to L^2(U)$ be a positive semi-definite integral operator with Hermitian kernel $F_1\in C(\bar U\times\bar U)$. Let further $Q_\ell\subset U$, $\ell=1,\ldots,N$, be open, pairwise disjoint sets such that $U\subset\bigcup_{\ell=1}^N\bar Q_\ell$ and define the kernel $F_2:\bar U\times\bar U\to\C$ by
\begin{equation*}
F_2 = F_1\sum_{\ell=1}^N\chi_{Q_\ell\times Q_\ell}.
\end{equation*}

Then, the integral operator $T_2:L^2(U)\to L^2(U)$ with the integral kernel $F_2$ is also positive semi-definite and fulfils
\begin{equation}\label{eqDiagonalBlocksSmallEv}
\sum_{n=r+1}^\infty\lambda_n(T_1)\le\sum_{n=r+1}^\infty\lambda_n(T_2)\quad\text{for every}\quad r\in\N_0.
\end{equation}
\end{lemma}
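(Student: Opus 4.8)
The plan is to reduce the asserted tail-sum inequality to two easier facts: an equality of traces and a comparison of the finitely many \emph{largest} eigenvalues, the latter being a majorisation-type statement that I would extract from Ky Fan's maximum principle (the same min-max circle of ideas already invoked via \cite{GohKre69} in the excerpt).

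First I would record the structure of $T_2$. Writing $\chi_\ell$ for the operator of multiplication by $\chi_{Q_\ell}$ on $L^2(U)$, the definition of $F_2$ gives $T_2=\sum_{\ell=1}^N\chi_\ell T_1\chi_\ell$, and since the $Q_\ell$ are pairwise disjoint the projections $\chi_\ell$ have mutually orthogonal ranges. Hence $\langle T_2h,h\rangle=\sum_\ell\langle T_1(\chi_\ell h),\chi_\ell h\rangle\ge 0$ for every $h\in L^2(U)$ because $T_1$ is positive semi-definite; this proves $T_2\ge 0$, and its kernel $F_2$ is again Hermitian, so $T_2$ is a self-adjoint Hilbert--Schmidt operator.

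Next I would convert tails into heads. Both $T_1$ and $T_2$ are positive semi-definite with continuous kernels, so by Mercer's theorem (as in \cite{CouHil53}) they are trace class with $\tr T_i=\int_U F_i(x,x)\d x$, and therefore $\sum_{n=r+1}^\infty\lambda_n(T_i)=\tr T_i-\sum_{n=1}^r\lambda_n(T_i)$. Because the $Q_\ell$ are disjoint open sets whose closures cover $U$, one has $F_2(x,x)=F_1(x,x)$ for $x\in\bigcup_\ell Q_\ell$, while the leftover set satisfies $U\setminus\bigcup_\ell Q_\ell\subset\bigcup_\ell\partial Q_\ell$ and is Lebesgue-negligible; since $F_1(x,x)\ge 0$ for a positive semi-definite continuous kernel, this yields the trace equality $\tr T_1=\tr T_2$. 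Consequently the claimed tail inequality is equivalent to the head inequality $\sum_{n=1}^r\lambda_n(T_2)\le\sum_{n=1}^r\lambda_n(T_1)$ for every $r\in\N_0$.

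The remaining and central step is this head inequality, which I would prove by Ky Fan's principle. Given arbitrary orthonormal $\phi_1,\dots,\phi_r\in L^2(U)$, let $P=\sum_{i}\langle\,\cdot\,,\phi_i\rangle\phi_i$ be the orthogonal projection onto their span and set $B=\sum_{\ell=1}^N\chi_\ell P\chi_\ell$. A short computation using $T_2=\sum_\ell\chi_\ell T_1\chi_\ell$ gives $\sum_{i=1}^r\langle T_2\phi_i,\phi_i\rangle=\tr(T_1B)$, and one checks that $0\le B\le I$ with $\tr B=\tr(P\sum_\ell\chi_\ell)\le r$; here the bound $B\le I$ comes from $\langle B\psi,\psi\rangle\le\sum_\ell\|\chi_\ell\psi\|^2=\|\chi_{\cup_\ell Q_\ell}\psi\|^2\le\|\psi\|^2$. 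Since $T_1\ge 0$, the extremal property $\max\{\tr(T_1B):0\le B\le I,\ \tr B\le r\}=\sum_{n=1}^r\lambda_n(T_1)$ (a restatement of Ky Fan's maximum principle) then bounds $\sum_i\langle T_2\phi_i,\phi_i\rangle$ by $\sum_{n=1}^r\lambda_n(T_1)$, and taking the supremum over orthonormal systems gives the head inequality. I expect this extremal/majorisation step — the statement that passing from $T_1$ to its block-diagonal pinching $T_2$ can only lower the partial sums of the top eigenvalues — to be the genuine content of the lemma; the positivity of $T_2$ and the trace bookkeeping are routine, with the only delicate point being the measure-zero estimate $|U\setminus\bigcup_\ell Q_\ell|=0$ underlying the trace equality, which is what makes the $r=0$ case consistent.
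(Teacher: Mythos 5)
Your proof follows the same skeleton as the paper's --- write $T_2=\sum_{\ell=1}^N P_\ell T_1P_\ell$ with $P_\ell$ the multiplication by $\chi_{Q_\ell}$, prove positivity, establish the head inequality $\sum_{n=1}^r\lambda_n(T_2)\le\sum_{n=1}^r\lambda_n(T_1)$ from Ky Fan's maximum principle, establish the trace identity $\sum_{n=1}^\infty\lambda_n(T_2)=\sum_{n=1}^\infty\lambda_n(T_1)$ from Mercer's theorem, and subtract --- but your implementation of the central step is genuinely different. The paper interpolates between $T_1$ and $T_2$ through $N$ operators $T^{(k)}=\frac12[T^{(k-1)}+(1-2P_k)T^{(k-1)}(1-2P_k)]$, verifies $T^{(N)}=T_2$, and applies Ky Fan together with conjugation-invariance of eigenvalues at each of the $N$ steps; you prove the pinching inequality in one stroke by writing $\sum_{i=1}^r\left<T_2\phi_i,\phi_i\right>=\tr(T_1B)$ with $B=\sum_\ell\chi_\ell P\chi_\ell$, checking $0\le B\le I$ and $\tr B\le r$, and invoking the extremal characterisation $\max\{\tr(T_1B)\mid 0\le B\le I,\ \tr B\le r\}=\sum_{n=1}^r\lambda_n(T_1)$. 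Your route is shorter and avoids the recursive bookkeeping; the paper's needs Ky Fan only in its orthonormal-system form plus elementary operator identities. Your positivity argument (direct, rather than the paper's argument by contradiction) is the same in substance.

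Two corrections are needed. First, you apply Mercer's theorem to $T_2$ on the grounds that it has a continuous kernel; but $F_2$ is \emph{not} continuous on $\bar U\times\bar U$ (it jumps across the block boundaries), so this step as written is invalid. The fix is exactly what the paper does: $T_2$ is block diagonal, its nonzero spectrum is the union over $\ell$ of the spectra of $P_\ell T_1P_\ell$ acting on $L^2(Q_\ell)$, and each block has the continuous kernel $F_1$ restricted to $\bar Q_\ell\times\bar Q_\ell$, so Mercer applies blockwise and yields $\sum_{n=1}^\infty\lambda_n(T_2)=\sum_{\ell=1}^N\int_{Q_\ell}F_1(x,x)\d x$. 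Second, your assertion that $U\setminus\bigcup_\ell Q_\ell$ is Lebesgue-negligible does not follow from the hypotheses: the boundary of an open set can have positive measure, and then the lemma itself fails --- take $N=1$, $Q_1=U\setminus K$ with $K$ a fat Cantor set, so that $Q_1$ is open and dense in $U$ with $|Q_1|<|U|$, and $F_1\equiv1$; then $\sum_n\lambda_n(T_2)=|Q_1|<|U|=\sum_n\lambda_n(T_1)$, contradicting the case $r=0$. You were right to flag this as the delicate point, but it is an additional hypothesis on the partition, not a fact. The paper's proof silently makes the same identification $\sum_\ell\int_{Q_\ell}F_1(x,x)\d x=\int_UF_1(x,x)\d x$, so this gap is shared by both arguments; it is harmless in the paper's application, where the $Q_\ell$ are cubes.
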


\begin{proof}
For each $\ell\in\{1,\ldots,N\}$, let $P_\ell:L^2(U)\to L^2(U)$, $P_\ell h=h\chi_{Q_\ell}$ be the orthogonal projection onto the subspace $L^2(Q_\ell)$.
In particular, because the sets $Q_\ell$ are pairwise disjoint, we have
\begin{equation}\label{eq:proj}
P_kP_\ell=\delta_{k,\ell}P_\ell.
\end{equation}
With this notation, we can write
\begin{equation}\label{eqDiagonalBlocksT2Proj}
T_2=\sum_{\ell=1}^NP_\ell T_1P_\ell.
\end{equation}

Now, we first show that $T_2$ is indeed positive semi-definite. Let us assume by contradiction that this is not the case. Then, there exists a function $h\in L^2(U)$ so that $\left<h,T_2h\right><0$. Thus, because of the representation~\autoref{eqDiagonalBlocksT2Proj} of $T_2$, we find an index $\ell\in\{1,\ldots,N\}$ such that
\[ \left<P_\ell h,T_1P_\ell h\right> = \left<h,P_\ell T_1P_\ell h\right> < 0. \]
However, this contradicts the fact that $T_1$ should be positive semi-definite.

To get a relation between the eigenvalues of $T_1$ and $T_2$, we construct a sequence $(T^{(k)})_{k=0}^N$ of positive semi-definite operators interpolating between $T^{(0)}=T_1$ and $T^{(N)}=T_2$. We define recursively for every $k\in\{1,\ldots,N\}$
\begin{equation}
\label{eq:T2}
T^{(k)}=\frac12[T^{(k-1)}+(1-2P_k)T^{(k-1)}(1-2P_k)]\quad\text{with}\quad T^{(0)}=T_1.
\end{equation}
Before continuing, we want to verify that this definition indeed yields $T^{(N)}=T_2$. We first remark that, because of the orthogonality relation \autoref{eq:proj}, the equation \autoref{eq:T2} can be written as
\begin{alignat*}{2}
T^{(k)}P_k&=P_kT^{(k-1)}P_k\quad&&\text{and} \\
T^{(k)}P_\ell &= (1-P_k)T^{(k-1)}P_\ell\quad&&\text{for}\quad\ell\ne k.
\end{alignat*}
Thus, we get recursively for every $\ell$ that
\begin{align*}
T^{(N)}P_\ell &= (1-P_N)T^{(N-1)}P_\ell\\
&=(1-P_N)\cdots(1-P_{\ell+1})T^{(\ell)}P_\ell\\
&=(1-P_N)\cdots(1-P_{\ell+1})P_{\ell}T^{(\ell-1)}P_\ell\\
&=(1-P_N)\cdots(1-P_{\ell+1})P_\ell(1-P_{\ell-1})\cdots(1-P_1)T_1P_\ell\\
&=P_\ell T_1P_\ell.
\end{align*}
So, again using the representation \autoref{eqDiagonalBlocksT2Proj} of $T_2$, we see that
\[ T^{(N)} = T^{(N)}\sum_{\ell=1}^NP_\ell = \sum_{\ell=1}^NP_\ell T_1P_\ell = T_2. \]

Now, by Ky Fan's maximum principle, see for example \cite[Chapter II.4]{GohKre69}, we can write the sum of the $r\in\N$ largest eigenvalues of $T^{(k)}$, $k\in\{1,\ldots,N\}$, in the form
\[ \sum_{n=1}^r\lambda_n(T^{(k)}) = \sup\left\{\sum_{n=1}^r\left<h_n,T_2^{(k)}h_n\right>\mid (h_n)_{n=1}^r\subset L^2(U),\;\left<h_n,h_{n'}\right>=\delta_{n,n'}\right\}, \]
Inserting the recursive definition \autoref{eq:T2} for $T^{(k)}$, we get from the subadditivity of the supremum the estimate
\[ \sum_{n=1}^r\lambda_n(T^{(k)})\le\frac12\sum_{n=1}^r\left[\lambda_n\big(T^{(k-1)}\big)+\lambda_n\Big((1-2P_k)T^{(k-1)}(1-2P_k)\Big)\right]. \]

Since $(1-2P_k)^2=1$ and eigenvalues are invariant under conjugation (that is, we have $\lambda_n(AT^{(k-1)}A^{-1})=\lambda_n(T^{(k-1)})$ for every invertible operator $A$), this simplifies to
\[ \sum_{n=1}^r\lambda_n(T^{(k)}) \le \sum_{n=1}^r\lambda_n(T^{(k-1}). \]
We therefore get recursively the inequality
\begin{equation}\label{eqDiagonalBlocksLargeEv}
\sum_{n=1}^r\lambda_n(T_2)\le\sum_{n=1}^r\lambda_n(T_1).
\end{equation}

Additionally, since $h$ is an eigenfunction of $T_2$ if and only if the functions $P_\ell h$ are for every $\ell\in\{1,\ldots,N\}$ either zero or an eigenfunction of $P_\ell T_1P_\ell$ with the same eigenvalue, we have that
\[ \sum_{n=1}^\infty\lambda_n(T_2)=\sum_{\ell=1}^N\sum_{n'=1}^\infty\lambda_{n'}(P_\ell T_1P_\ell). \]
According to Mercer's theorem, see for example \cite[Chapter~III, §5 and §9]{CouHil53}, we therefore get as in \autoref{eq:nt1t1c} that
\begin{equation}\label{eqDiagonalBlocksTrace}
\sum_{n=1}^\infty\lambda_n(T_2) = \sum_{\ell=1}^N\int_{Q_\ell}F_1(x,x)\d x=\int_UF_1(x,x)\d x = \sum_{n=1}^\infty\lambda_n(T_1).
\end{equation}
Finally, combining \autoref{eqDiagonalBlocksLargeEv} and \autoref{eqDiagonalBlocksTrace}, we obtain the estimate \autoref{eqDiagonalBlocksSmallEv}.
\end{proof}

Now, putting together \autoref{thFiniteRankPerturbation} and \autoref{thDiagonalBlocks}, we obtain a decay rate for the eigenvalues of the integral operator depending on the convergence rate of the Taylor series of its kernel.

\begin{proposition}\label{thDecayRatesGeneral}
Let $U\subset\R^m$ be a bounded, open set, $T:L^2(U)\to L^2(U)$ be a positive semi-definite integral operator with an Hermitian kernel $F\in C^k(\bar U\times\bar U)$, $k\in\N$, and define
\begin{equation*}
M_j=\frac1{j!}\sup_{x,y\in U}\sup_{v\in S^{m-1}}\left|\Dj F(x,y+sv)\right|,\quad j\in\N,\;j\le k.
\end{equation*}

Then, there exist constants $A>0$ and $a>0$ such that for every $n\in\N$
\begin{equation}\label{eqEigenvalueGeneralUpperBound}
\lambda_n(T) \le A\min_{j\in J_{k,n}}\left[M_j\left(a\sqrt[m]{\frac2n}\right)^j(j+m)^{j+m}\right],
\end{equation}
where we take the minimum over all values $j$ in the set
\[ J_{k,n} =\left\{j\in\N\mid a(j+m)\le\sqrt[m]{\frac n2},\;j\le k\right\}. \]
\end{proposition}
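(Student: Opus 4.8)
The plan is to combine the two preceding lemmas through a local Taylor approximation of $F$. First I would fix a scale $\delta>0$ and cover $U$ by $N$ pairwise disjoint open blocks $Q_1,\dots,Q_N$ (intersections of a grid of cubes of side $\delta$ with $U$), so that $\diam Q_\ell\le\sqrt m\,\delta$, $\sum_{\ell}|Q_\ell|=|U|$, and $N\le C_0\delta^{-m}$ for a constant $C_0$ depending only on $U$. Applying \autoref{thDiagonalBlocks} to these blocks gives, for every $R\in\N_0$,
\[ \sum_{n=R+1}^\infty\lambda_n(T)\le\sum_{n=R+1}^\infty\lambda_n(\hat T), \]
where $\hat T$ is the block--diagonal operator $\sum_\ell P_\ell TP_\ell$, whose eigenvalues are the disjoint union over $\ell$ of the eigenvalues of the blocks $T_\ell:L^2(Q_\ell)\to L^2(Q_\ell)$ with kernel $F|_{Q_\ell\times Q_\ell}$. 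Note that $\hat T$ cannot be fed directly into \autoref{thFiniteRankPerturbation}, since its global kernel is discontinuous across block boundaries; this is exactly why the diagonal reduction must precede the approximation (the off--diagonal part of $F$ is not small in sup norm) and why the approximation has to be done block by block.

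On each block I would replace $F(x,\cdot)$ by its Taylor polynomial of degree $j-1$ about the block centre. This is a kernel of rank at most $d_j:=\binom{j+m-1}{m}$, and by Taylor's theorem together with the definition of $M_j$ its sup--norm distance to $F$ on $Q_\ell\times Q_\ell$ is at most $M_j(\sqrt m\,\delta)^j$ (the intermediate points of the expansion stay in $U$ once the interior blocks are used, the boundary blocks being handled by a $C^k$ extension of $F$ that changes the constants by a fixed factor). \autoref{thFiniteRankPerturbation} applied on each $Q_\ell$ then yields $\sum_{k>d_j}\lambda_k(T_\ell)\le(2d_j+1)|Q_\ell|M_j(\sqrt m\,\delta)^j$. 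To pass from these $N$ per--block tails to the tail of $\hat T$, set $R:=Nd_j$ and use the elementary fact that the union of the top $d_j$ eigenvalues of the blocks is a sub--multiset of at most $R$ non--negative numbers, so the genuine $R$ largest eigenvalues of $\hat T$ dominate it; subtracting from the trace gives $\sum_{n>R}\lambda_n(\hat T)\le\sum_\ell\sum_{k>d_j}\lambda_k(T_\ell)\le(2d_j+1)M_j(\sqrt m\,\delta)^j|U|$.

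Combining the two bounds, for $n>R$ the monotonicity of the eigenvalues gives $(n-R)\lambda_n(T)\le\sum_{i>R}\lambda_i(T)\le(2d_j+1)M_j(\sqrt m\,\delta)^j|U|$. The final step is to calibrate the scale: choosing $\delta=(2C_0d_j/n)^{1/m}$ forces $R=Nd_j\le n/2$, hence $n-R\ge n/2$, and substitutes $(\sqrt m\,\delta)^j=m^{j/2}(2C_0d_j/n)^{j/m}$ into the estimate. Using the crude bound $d_j\le(j+m)^m$, so that $(2d_j+1)\,d_j^{\,j/m}\le 3(j+m)^{j+m}$, and absorbing the remaining dimensional and volume factors into constants $A$ and $a$, everything collapses to
\[ \lambda_n(T)\le A\,M_j\left(a\sqrt[m]{\tfrac2n}\right)^{\!j}(j+m)^{j+m}. \]
The admissibility of the chosen $\delta$ — that it is small enough for the covering and that $R\le n/2$ — is guaranteed precisely by the constraint $a(j+m)\le\sqrt[m]{n/2}$ defining $J_{k,n}$; since the bound holds for every such $j$, taking the minimum over $J_{k,n}$ gives the claim.

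I expect the main obstacle to be the bookkeeping that couples the two lemmas correctly: \autoref{thDiagonalBlocks} is needed to discard the off--diagonal contribution in the only harmless way (a tail--sum inequality, not a sup--norm estimate), yet \autoref{thFiniteRankPerturbation} requires continuous kernels and therefore cannot act on the discontinuous $\hat T$ directly, which forces the per--block application and the sorting argument that turns $N$ separate tail estimates into a single global one. Once this coupling is in place, the Taylor remainder control, the combinatorial estimate $d_j\le(j+m)^m$, and the calibration $\delta=\delta(n,j)$ that makes the set $J_{k,n}$ emerge are all routine.
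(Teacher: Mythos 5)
Your proposal follows the same route as the paper's own proof: partition $U$ into blocks of diameter of order $\delta$, use \autoref{thDiagonalBlocks} to pass to the block-diagonal operator, approximate $F$ on each block by its Taylor polynomial of degree $j-1$ in the second variable, apply \autoref{thFiniteRankPerturbation} block by block, reassemble the per-block tails into a global tail via the trace identity, and calibrate $\delta$ as a function of $n$ and $j$ so that the rank constraint becomes exactly the condition defining $J_{k,n}$. Your variant $(n-R)\lambda_n(T)\le\sum_{i>R}\lambda_i(T)$ is a harmless (in fact slightly sharper) replacement for the paper's observation that $\lambda_n(T)$ itself sits in the tail sum, and your remark about boundary blocks (segments possibly leaving $U$, handled by a $C^k$ extension) addresses a point the paper silently glosses over.

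However, one step fails as written: \autoref{thFiniteRankPerturbation} requires \emph{both} kernels to be Hermitian, and its proof genuinely needs this, since the identity $|\lambda_n(T_1-T_2)|=\min_{\Rank(A)\le n-1}\|T_1-T_2-A\|$ and the Mercer/trace argument bounding $\sum_n\lambda_n(T_1-T_2)$ both presuppose that $T_1-T_2$ is self-adjoint. The Taylor kernel $(x,y)\mapsto\sum_{|\alpha|\le j-1}\frac1{\alpha!}\partial_y^\alpha F(x,z_\ell)(y-z_\ell)^\alpha$ is not Hermitian, so you cannot feed it into the lemma directly. The paper's fix is to symmetrise, replacing $F_{j,\ell}$ by $\tilde F_{j,\ell}(x,y)=\frac12\bigl(F_{j,\ell}(x,y)+\overline{F_{j,\ell}(y,x)}\bigr)$; because $F$ itself is Hermitian, this keeps the sup-norm error at most $M_j\delta^j$, while at most doubling the rank bound, i.e.\ $r_j=2\binom{j+m-1}{m}$ instead of your $d_j=\binom{j+m-1}{m}$, which only affects the constants $A$ and $a$. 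With this single correction inserted, your argument goes through and coincides with the paper's proof.
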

\begin{proof}
For some $\delta\in(0,1)$, we partition the domain $U$ in pairwise disjoint open sets $Q_\ell$, $\ell=1,\ldots,N$, with diameter not greater than $\delta$ such that $\bigcup_{\ell=1}^N\bar Q_\ell\supset U$. We remark that there exists a constant $a>0$ so that we can find for every $\delta\in(0,1)$ such a partition with $N$ sets where
\begin{equation}\label{eqRelDeltaN}
N<\left(\frac a\delta\right)^m
\end{equation}
(for example by picking a cube with side length $D=\diam(U)$ containing $U\subset\R^m$ and choosing a partition in $\lceil\frac DL\rceil^m$ cubes of side length $L=\frac\delta{\sqrt m}$, which gives an estimate of the form \autoref{eqRelDeltaN} with $a=D\sqrt m+1$).

According to \autoref{thDiagonalBlocks}, we can now get an upper bound for the behaviour of the lower eigenvalues of $T$ by considering the eigenvalues of the integral operator $\tilde T:L^2(U)\to L^2(U)$ with kernel $F\sum_{\ell=1}^N\chi_{Q_\ell\times Q_\ell}$ or, equivalently, the eigenvalues of the integral operators $T_\ell:L^2(Q_\ell)\to L^2(Q_\ell)$ with the integral kernels $F\chi_{Q_\ell\times Q_\ell}$.

To obtain an estimate for the eigenvalues of the operators $T_\ell$, we consider instead of $T_\ell$ the finite rank operator which we get by approximating the kernel $F$ on $Q_\ell\times Q_\ell$ by a polynomial and then apply \autoref{thFiniteRankPerturbation}, see \cite[§2]{Wey12}.

So, we pick in every set $Q_\ell$ an arbitrary point $z_\ell$ and expand $F$ on $Q_\ell\times Q_\ell$ in a Taylor polynomial of degree $j-1$ for some $j\le k$ with respect to the second variable around the points $z_\ell$. Then, we get
\[ F(x,y) = F_{j,\ell}(x,y)+C_{j,\ell}(x,y),\quad x,y\in Q_\ell, \]
with the Taylor polynomial $F_{j,\ell}$ explicitly given by
\[ F_{j,\ell}(x,y) = \sum_{\{\alpha\in\N_0^m\mid|\alpha|\le j-1\}}\frac1{\alpha!}\partial_y^\alpha F(x,z_\ell)(y-z_\ell)^\alpha, \]
and with the remainder term $C_{j,\ell}$, which can be uniformly estimated by
\begin{equation}\label{eqDecayRatesGeneralRemainder}
|C_{j,\ell}(x,y)|\le M_j\delta^j\quad\text{for all}\quad x,y\in Q_\ell.
\end{equation}
Since $F_{j,\ell}$ is not necessarily Hermitian, we symmetrise it by defining the kernel $\tilde F_{j,\ell}$ on $Q_\ell\times Q_\ell$ as
\[ \tilde F_{j,\ell}(x,y) = \frac12(F_{j,\ell}(x,y)+\overline{F_{j,\ell}(y,x)}). \]

Then, $\tilde F_{j,\ell}$ is of the form $\tilde F_{j,\ell}(x,y)=\sum_{i=1}^{r_j}a_{i,\ell}(x)b_{i,\ell}(y)$ for some functions $a_{i,\ell},b_{i,\ell}\in C(\bar Q_\ell)$ with~$r_j$ given by two times the number of elements in the set $\{\alpha\in\N_0^m\mid|\alpha|\le j-1\}$, which is $r_j=2\binom{j+m-1}m$. Thus, the integral operator $\tilde T_{j,\ell}:L^2(Q_\ell)\to L^2(Q_\ell)$ with kernel $\tilde F_{j,\ell}$ has a finite rank which is not grater than $r_j$. Moreover, we get from \autoref{eqDecayRatesGeneralRemainder} the uniform estimate
\[ \sup_{x,y\in Q_\ell}|F(x,y)-\tilde F_{j,\ell}(x,y)| \le M_j\delta^j. \]

Therefore, \autoref{thFiniteRankPerturbation} gives us directly that
\begin{equation}\label{eqEvSingleBlock}
\sum_{n=r_j+1}^\infty\lambda_n(T_\ell) \le (2r_j+1)M_j|Q_\ell|\delta^j.
\end{equation}
Now, since every eigenvalue of the integral operator $\tilde T$ corresponds exactly to one eigenvalue of one of the operators $T_\ell$, we have that
\begin{equation}\label{eqEvBlockMatrix}
\sum_{n=1}^\infty\lambda_n(\tilde T) = \sum_{\ell=1}^N\sum_{n=1}^\infty\lambda_n(T_\ell),
\end{equation}
and since the eigenvalues of every operator are enumerated in decreasing order, we have that
\begin{equation}\label{eqEvBlockMatrixEstimate}
\sum_{n=1}^{Nr_j}\lambda_n(\tilde T) \ge \sum_{\ell=1}^N\sum_{n=1}^{r_j}\lambda_n(T_\ell).
\end{equation}

Thus, we get from \autoref{thDiagonalBlocks} for the eigenvalues of the operator $T$ by combining \autoref{eqEvBlockMatrix}, \autoref{eqEvBlockMatrixEstimate}, and using the estimate \autoref{eqEvSingleBlock} that
\begin{equation}\label{eqEvEstimateGeneral}
\sum_{n=Nr_j+1}^\infty\lambda_n(T) \le \sum_{n=Nr_j+1}^\infty\lambda_n(\tilde T) \le \sum_{\ell=1}^N\sum_{n=r_j+1}^\infty\lambda_n(T_\ell) \le (2r_j+1)M_j|U|\delta^j.
\end{equation}

For fixed $n\in\N$ and $j\in\N$, we now want to choose the parameter $N\in\N$ in such a way that $Nr_j<n$ and that we can make the parameter $\delta$ as small as possible. We pick
\[ \delta = a\sqrt[m]{\frac{r_j}n}, \]
where we assume that $j$ is chosen such that $r_j<\frac n{a^m}$, so that $\delta<1$ is fulfilled (an upper bound on $\delta$ is needed for an estimate of the form \autoref{eqRelDeltaN}). Because of $r_j=2\binom{j+m-1}{m}\le2(j+m-1)^m$, this condition on~$r_j$ can be ensured by imposing
\begin{equation}\label{eqEvBoundOnRj}
j+m\le\frac1a\sqrt[m]{\frac n2}.
\end{equation}

Then, according to \autoref{eqRelDeltaN}, there exists a partition $(Q_\ell)_{\ell=1}^N$ with
\[ N < \left(\frac a\delta\right)^m = \frac n{r_j}. \]

Evaluating \autoref{eqEvEstimateGeneral} at these parameters, we find that
\[ \lambda_n(T) \le \sum_{\tilde n=Nr_j+1}^\infty\lambda_{\tilde n}(T) \le (2r_j+1)M_j|U|\delta^j \le (2r_j+1)M_j|U|a^j\left(\frac{r_j}n\right)^{\frac jm}. \]
Simplifying the expression by estimating $r_j\le2(j+m-1)^m\le2(j+m)^m$ and $2r_j+1\le4(j+m)^m$, we finally get that
\[ \lambda_n(T) \le 4|U|M_j\left(a\sqrt[m]{\frac2n}\right)^j(j+m)^{j+m}, \]
where we can choose $j\in\{1,\ldots,k\}$ arbitrary as long as the condition \autoref{eqEvBoundOnRj} is fulfilled.
\end{proof}

In particular, \autoref{thDecayRatesGeneral} includes the trivial case where the kernel $F$ is a polynomial of degree $K$, in which case $M_{K+1}=0$ and we obtain $\lambda_n(T)=0$ for all $n\ge2(a(K+m+1))^m$, since then $K+1\in J_{K+1,n}$. Moreover, we find that for a general $F\in C^k(\bar U\times\bar U)$, we may always pick $j=k$ for $n\ge2(a(k+m))^m$ to obtain that the eigenvalues decay at least as
\[ \lambda_n(T)\le Cn^{-\frac km} \]
for some constant $C>0$.

For smooth kernels $F$, the optimal choice of $j$ depends on the behaviour of the supremum $M_j$ of the directional derivative as a function of $j$.

\begin{corollary}\label{thExponentialDecay}
Let $U\subset\R^m$ be a bounded, open set and $T:L^2(U)\to L^2(U)$ be the positive semi-definite integral operator with the smooth, Hermitian kernel $F\in C^\infty(\bar U\times\bar U)$.

If we have for some constants $B,b,\mu>0$ the inequality
\begin{equation}\label{eqKernelUpperBound}
\frac1{j!}\sup_{x,y\in U}\sup_{v\in S^{m-1}}\left|\Dj F(x,y+sv)\right| \le Bb^jj^{\mu j},
\end{equation}
then there exist constants $C,c>0$ so that the eigenvalues decay at least as
\begin{equation}\label{eqExponentialDecay}
\lambda_n(T)\le Cn\sqrt[m]n\exp\left(-cn^{\frac1{m(1+\mu)}}\right),\quad n\in\N.
\end{equation}
\end{corollary}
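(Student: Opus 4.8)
The plan is to feed the hypothesis \autoref{eqKernelUpperBound} into the general eigenvalue estimate \autoref{eqEigenvalueGeneralUpperBound} of \autoref{thDecayRatesGeneral} and then to choose the free index $j$ at the scale that balances the two competing factors. Since $F\in C^\infty(\bar U\times\bar U)$, the estimate \autoref{eqEigenvalueGeneralUpperBound} holds with $k$ arbitrarily large, so I may minimise over all $j\in\N$ subject only to $a(j+m)\le\sqrt[m]{n/2}$. Inserting $M_j\le Bb^jj^{\mu j}$ gives, for every admissible $j$,
\[ \lambda_n(T) \le AB\,b^j j^{\mu j}\Bigl(a\sqrt[m]{\tfrac2n}\Bigr)^j(j+m)^{j+m}. \]

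First I would take logarithms and isolate the dominant $j$-dependence. Using $(j+m)^{j+m}\le(2j)^{j+m}$ for $j\ge m$ to write $(j+m)\log(j+m)\le j\log j+m\log j+(j+m)\log2$, all the superlinear terms collapse into a single contribution $(1+\mu)\,j\log j$, while the factor $\bigl(a\sqrt[m]{2/n}\bigr)^j$ contributes $-\tfrac jm\log n$ plus a term linear in $j$. Thus, up to an additive constant and a linear-in-$j$ term $Pj$ with $P$ independent of $n$,
\[ \log\lambda_n(T) \le (1+\mu)\,j\log j-\tfrac jm\log n+Pj+m\log j+\text{const}. \]

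The decisive step is the choice $j=j_n$ at the critical scale $j_n\sim\gamma\,n^{\frac1{m(1+\mu)}}$, which makes $(1+\mu)\log j_n\approx\tfrac1m\log n-Q$ for a constant $Q=Q(\gamma)$ that grows without bound as $\gamma\downarrow0$. With this choice the two leading terms cancel, $(1+\mu)j_n\log j_n-\tfrac{j_n}m\log n\approx -Q\,j_n$, leaving $\log\lambda_n(T)\le(P-Q)\,j_n+m\log j_n+\text{const}$. Taking $\gamma$ small enough that $Q>P$, the coefficient $P-Q$ becomes a negative constant $-c'$, and since $j_n\sim\gamma\,n^{\frac1{m(1+\mu)}}$ the term $-c'j_n$ produces the exponential factor $\exp(-cn^{\frac1{m(1+\mu)}})$, while $m\log j_n$ only contributes a $\log n$ term and hence a polynomial prefactor, comfortably bounded by $n\sqrt[m]n$. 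This yields \autoref{eqExponentialDecay}.

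Two routine points remain. I must round $j_n$ to an integer and check it lies in the admissible set, i.e.\ $a(j_n+m)\le\sqrt[m]{n/2}$; this holds for all large $n$ because $\tfrac1{m(1+\mu)}<\tfrac1m$, so $j_n$ grows strictly slower than $n^{1/m}$, and the rounding perturbs $\log\lambda_n(T)$ only by a bounded amount. The finitely many small $n$ for which the admissible set is too small are absorbed into the constant $C$ using $\lambda_n(T)\le\|T\|<\infty$. The main obstacle is purely bookkeeping: ensuring the near-cancellation of $(1+\mu)j\log j$ against $\tfrac jm\log n$ leaves a residual coefficient of $j$ with the correct (negative) sign, which is exactly what forces the balancing scale $n^{1/(m(1+\mu))}$ and the freedom to shrink $\gamma$.
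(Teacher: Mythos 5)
Your proposal is correct and follows essentially the same route as the paper: both feed the bound $M_j\le Bb^jj^{\mu j}$ into \autoref{eqEigenvalueGeneralUpperBound} of \autoref{thDecayRatesGeneral} and then choose the free index $j$ at the balancing scale $n^{\frac1{m(1+\mu)}}$, checking admissibility for large $n$ and absorbing small $n$ into the constant. The only cosmetic difference is that the paper minimises the continuous relaxation $f_n(\zeta)=(\alpha_n\zeta^{1+\mu})^\zeta$ exactly (critical point $\zeta_n=\e^{-1}\alpha_n^{-1/(1+\mu)}$, evaluated at $\lfloor\zeta_n\rfloor$), whereas you work with logarithms and exploit the freedom to shrink the prefactor $\gamma$ instead of locating the exact optimum; both yield the same exponent and an acceptable polynomial prefactor.
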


\begin{proof}
Using \autoref{thDecayRatesGeneral} with the upper bound \autoref{eqKernelUpperBound} for the constants $M_j$, we find that there exist constants $A,a>0$ so that
\[ \lambda_n(T) \le AB\min_{j+m\le\frac1a\sqrt[m]{\frac n2}}\left[\left(ab\sqrt[m]{\frac2n}\right)^jj^{\mu j}(j+m)^{j+m}\right]\quad\text{for all}\quad n\in\N. \]
To simplify this, we estimate $j^{\mu j}\le(j+m)^{\mu(j+m)}$ and obtain with $\tilde j=j+m$
\begin{equation}\label{eqExponentialDecayMinimum}
\lambda_n(T) \le \tilde An\min_{\tilde j\le\frac1a\sqrt[m]{\frac n2}}\left[\left(ab\sqrt[m]{\frac2n}\right)^{\tilde j}{\tilde j}^{(1+\mu)\tilde j}\right]\quad\text{for all}\quad n\in\N
\end{equation}
for some constant $\tilde A>0$.

To evaluate the minimum in \autoref{eqExponentialDecayMinimum}, we consider for
\begin{equation}\label{eqAlphan}
\alpha_n=ab\sqrt[m]{\frac2n}
\end{equation}
the function
\[ f_n:(0,\infty)\to(0,\infty),\quad f_n(\zeta) = \left(\alpha_n\zeta^{1+\mu}\right)^\zeta. \]
Then, by solving the optimality condition
\[ 0=f_n'(\zeta)=((1+\mu)\log\zeta+1+\mu+\log\alpha_n)f_n(\zeta), \]
we find that $f_n$ attains its minimum at
\begin{equation}\label{eqMinimumF}
\zeta_n=\e^{-1}\alpha_n^{-\frac1{1+\mu}},
\end{equation}
see \autoref{fgFunctionF}.

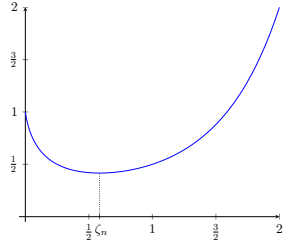
\begin{figure}[ht]
\begin{center}
\begin{tikzpicture}[scale=0.5]
\begin{axis}[axis lines=center,ymin=-0.05,ymax=2,xmin=-0.05,xtick={0,0.5,0.584,1,1.5,2},xticklabels={0,$\frac12$,$\;\zeta_n$,$1$,$\frac32$,$2$},ytick={0,0.5,1,1.5,2},yticklabels={0,$\frac12$,1,$\frac32$,2}]
\addplot[domain=0:2,samples=200,color=blue] {0.5^x*x^(1.5*x)};
\draw[densely dotted](axis cs:0.584,-0.01)--(axis cs:0.584,0.416);
\end{axis}
\end{tikzpicture}
\end{center}
\caption{Graph of the function $f_n(\zeta)=(\alpha_n\zeta^{1+\mu})^\zeta$ for $\alpha_n=\frac12$ and $\mu=\frac12$.}
\label{fgFunctionF}
\end{figure}

Since we only need the asymptotic behaviour for $n\to\infty$, let us pick a value $n_0\in\N$ such that
\begin{alignat*}{2}
\zeta_n&>1,\qquad&&\text{that is}\quad\alpha_n<\e^{-(1+\mu)}<1,\quad\text{and}\\
\zeta_n&<\frac1a\sqrt[m]{\frac n2}=\frac b{\alpha_n},\qquad&&\text{that is}\quad \alpha_n^{\frac\mu{1+\mu}}<b\e,
\end{alignat*}
for all $n\ge n_0$. This can be always achieved since $\alpha_n\to0$ as $n\to\infty$.

Now, the minimum in \autoref{eqExponentialDecayMinimum} is restricted to the set of natural numbers $\tilde j\le\frac1a\sqrt[m]{\frac n2}$ so that we cannot simply insert for $\tilde j$ the minimum point $\zeta_n$ of the function $f_n$. Instead, we estimate the minimum from above by the value of $f_n$ at the largest integer $\lfloor\zeta_n\rfloor$ below $\zeta_n$:
\[ \lambda_n(T) \le \tilde An\min_{\tilde j\le\frac1a\sqrt[m]{\frac n2}}f_n(\tilde j) \le \tilde Anf_n(\lfloor\zeta_n\rfloor). \]

Since $\alpha_n<1$ and $\zeta_n>1$ for all $n\ge n_0$, we get from the explicit formula \autoref{eqMinimumF} for $\zeta_n$ that
\[ f(\lfloor\zeta_n\rfloor)\le\alpha_n^{\zeta_n-1}\zeta_n^{(1+\mu)\zeta_n}=\frac1{\alpha_n}(\alpha_n\zeta_n^{1+\mu})^{\zeta_n}=\frac1{\alpha_n\e^{(1+\mu)\zeta_n}}. \]
Thus, using the expressions \autoref{eqAlphan} and \autoref{eqMinimumF} for $\alpha_n$ and $\zeta_n$, we find constants $C,c>0$ such that
\[ \lambda_n(T) \le Cn\sqrt[m]n\exp\left(-cn^{\frac1{m(1+\mu)}}\right)\quad\text{for all}\quad n\in\N. \]
\end{proof}

\section{Estimating the Kernel of the Integrated Photoacoustic Operator}\label{seDerivativeGreensFunction}

To be able to use \autoref{thExponentialDecay} to estimate the eigenvalues of the operator $\PO^*\PO$, we need to find an upper bound for the derivatives of the integral kernel $F_\kappa$ of the operator $\PO^*\PO$, which is given by \autoref{eqPATOp2Kernel}. Since
\[ F_\kappa(x,y) = \int_{-\infty}^\infty\int_{\partial\Omega}\frac1{\omega^2}\overline{G_\kappa(\omega,\xi-x)}G_\kappa(\omega,\xi-y)\d S(\xi)\d\omega, \]
where $G_\kappa$ denotes the fundamental solution of the Helmholtz equation, given by \autoref{eqSolAttWaveKernel}, we start with the directional derivatives of the function $G_\kappa(\omega,\cdot)$. Since $G_\kappa(\omega,\cdot)$ is radially symmetric, this means we can write it for arbitrary $\omega\in\R$ and $x\in\R^3\setminus\{0\}$ in the form
\begin{equation}\label{eqGreensFunctionRadialPart}
G_\kappa(\omega,x) = g_{\kappa,\omega}(\tfrac12|x|^2)\quad\text{with}\quad g_{\kappa,\omega}(\rho) = -\frac{\i \omega}{4\pi\sqrt{2\pi}}\,\frac{\e^{\i\kappa(\omega)\sqrt{2\rho}}}{\sqrt{2\rho}}\quad\text{for}\quad\rho>0,
\end{equation}
this problem reduces to the calculation of one dimensional derivatives of the function $g_{\kappa,\omega}$.

\begin{lemma}\label{thDerivativeRadialSymmetric}
Let $\phi\in C^\infty(\R)$ be defined by
\[ \phi(s)=\frac12|x+sv|^2 \]
for some arbitrary $x\in\R^m$ and $v\in S^{m-1}$. Then, we have for every function $\gamma\in C^\infty(\R)$ that
\begin{equation}\label{eqDerivativeRadialSymmetric}
(\gamma\circ\phi)^{(j)}(0) = \sum_{k=0}^{\lfloor\frac j2\rfloor}\frac{j!}{2^kk!(j-2k)!}\left<v,x\right>^{j-2k}\gamma^{(j-k)}(\tfrac12|x|^2).
\end{equation}
\end{lemma}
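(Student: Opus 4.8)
The plan is to exploit that the inner function $\phi$ is a \emph{quadratic} polynomial in $s$, which collapses the combinatorics of differentiating a composition into a single sum. First I would expand, using $|v|=1$,
\[ \phi(s) = \tfrac12|x|^2 + \left<v,x\right>s + \tfrac12 s^2, \]
so that $\phi(0)=\tfrac12|x|^2$, $\phi'(0)=\left<v,x\right>$, $\phi''(s)\equiv 1$, and $\phi^{(i)}\equiv 0$ for all $i\ge 3$. This is the only structural input needed: the statement is really an identity about composing a smooth $\gamma$ with a fixed quadratic.

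My main route would be to prove, by induction on $j$, the slightly stronger $s$-dependent identity
\[ (\gamma\circ\phi)^{(j)}(s) = \sum_{k=0}^{\lfloor j/2\rfloor}\frac{j!}{2^k k!(j-2k)!}\,(\phi'(s))^{j-2k}\,\gamma^{(j-k)}(\phi(s)), \]
and then to specialise to $s=0$, inserting $\phi'(0)=\left<v,x\right>$ and $\phi(0)=\tfrac12|x|^2$ to recover \autoref{eqDerivativeRadialSymmetric}. The base case $j=0$ is immediate. For the inductive step I would differentiate the right-hand side: the product rule yields one contribution in which $\gamma^{(j-k)}(\phi)$ is differentiated, producing $\gamma^{(j-k+1)}(\phi)\,\phi'$, and one in which the power $(\phi')^{j-2k}$ is differentiated, producing $(j-2k)(\phi')^{j-2k-1}\phi'' = (j-2k)(\phi')^{j-2k-1}$ because $\phi''\equiv1$. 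Re-indexing both sums so that every term is written in the form $(\phi')^{(j+1)-2k}\gamma^{(j+1-k)}(\phi)$ reduces the step to the Pascal-type recursion
\[ c_{j+1,k}=c_{j,k}+(j-2k+2)\,c_{j,k-1},\qquad c_{j,k}=\frac{j!}{2^k k!(j-2k)!}, \]
with the convention $c_{j,k}=0$ once $j-2k<0$, which also handles the boundary terms automatically.

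The main (and essentially only) obstacle is this factorial bookkeeping. Putting the two terms over the common denominator $2^k k!\,(j-2k+1)!$ and using the identity $(j-2k)+1+2k=j+1$ collapses the numerator to $(j+1)!$, which gives exactly $c_{j+1,k}$. As an alternative that bypasses the induction, one may invoke Faà di Bruno's formula directly: since $\phi^{(i)}$ vanishes for $i\ge3$, the usual multi-index sum keeps only the terms indexed by the multiplicities $m_1,m_2$ of $\phi'$ and $\phi''$ subject to $m_1+2m_2=j$. Writing $k=m_2$, so $m_1=j-2k$, the single surviving factor $(\phi''/2!)^{m_2}=2^{-k}$ together with $(\phi')^{m_1}=\left<v,x\right>^{j-2k}$ and $\gamma^{(m_1+m_2)}=\gamma^{(j-k)}$ reproduces \autoref{eqDerivativeRadialSymmetric} term by term. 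Either way the computation is elementary once the quadratic structure of $\phi$ is recognised.
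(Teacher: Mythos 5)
Your proposal is correct. Your primary route differs from the paper's: the paper simply invokes Fa\`a di Bruno's formula (citing Comtet), observes that since $\phi^{(i)}\equiv 0$ for $i\ge 3$ only the index pairs $(\alpha_1,\alpha_2)$ with $\alpha_1+2\alpha_2=j$ survive, and reads off \autoref{eqDerivativeRadialSymmetric} by substituting $k=\alpha_2$ --- this is precisely the alternative you sketch in your last paragraph, and your bookkeeping there (the factor $(\phi''/2!)^{m_2}=2^{-k}$, the order $\gamma^{(m_1+m_2)}=\gamma^{(j-k)}$) matches the paper's computation exactly. Your main argument, by contrast, is an induction on $j$ proving the stronger $s$-dependent identity, reduced to the coefficient recursion $c_{j+1,k}=c_{j,k}+(j-2k+2)c_{j,k-1}$ for $c_{j,k}=\frac{j!}{2^k k!(j-2k)!}$; this recursion is indeed verified by your common-denominator computation (the numerator is $j!\left((j-2k+1)+2k\right)=(j+1)!$), and the convention $c_{j,k}=0$ for $j-2k<0$ does handle the parity boundary correctly, as one checks for $j$ odd and $k=\frac{j+1}{2}$. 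What the two approaches buy: the paper's proof is shorter but leans on an external combinatorial identity, whereas your induction is entirely self-contained, elementary, and yields the formula for all $s$ rather than only at $s=0$ --- at the modest cost of the factorial bookkeeping you carry out. Both exploit the same structural fact, namely that $\phi$ is a quadratic with $\phi''\equiv 1$.
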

\begin{proof}
Since $\phi'(0)=\left<v,x\right>$, $\phi''(0)=1$, and all higher derivatives of $\phi$ are zero, the formula of Faà di Bruno, see for example~\cite[Chapter 3.4, Theorem A]{Com74}, simplifies to
\[ (\gamma\circ\phi)^{(j)}(0) = \sum_{\alpha\in A_{2,j}}\frac{j!}{\alpha_1!\alpha_2!}\left(\frac{\left<v,x\right>}{1!}\right)^{\alpha_1}\left(\frac1{2!}\right)^{\alpha_2}\gamma^{(\alpha_1+\alpha_2)}(\tfrac12|x|^2), \]
where $A_{2,j}=\{\alpha\in\N_0^2\mid\alpha_1+2\alpha_2=j\}$. Setting $k=\alpha_2$ and thus $\alpha_1=j-2k$, we obtain the formula in the form \autoref{eqDerivativeRadialSymmetric}.
\end{proof}

Thus, the directional derivatives of $G_\kappa(\omega,\cdot)$ can be calculated from the derivatives of $g_{\kappa,\omega}$, which we may estimate directly.
\begin{lemma}\label{thDerivativeOfExponential}
Let $\gamma_a\in C^\infty((0,\infty))$ denote the function
\begin{equation}\label{eqGammaA}
\gamma_a(\rho) = \frac{\e^{a\sqrt{2\rho}}}{\sqrt{2\rho}},\quad\rho>0,\;a\in\C.
\end{equation}
Then, we have for every $j\in\N_0$ and all $\rho>0$ the inequality
\begin{equation}\label{eqDerivativeOfExponential}
|\gamma_a^{(j)}(\rho)| \le 2^j(j+1)!\left(\e^{j+1}+\frac1{j!}\left(\frac\rho2\right)^{\frac j2}|a|^j\right)\frac{|\gamma_a(\rho)|}{\rho^j}.
\end{equation}
\end{lemma}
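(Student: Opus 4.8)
The plan is to reduce the bound to a one–dimensional computation and then to a single polynomial inequality. First I would pass to the variable $t=\sqrt{2\rho}$, so that $\gamma_a(\rho)=\e^{at}/t$ and $\frac{\d}{\d\rho}=\frac1t\frac{\d}{\d t}$, and differentiate $j$ times in $\rho$. An induction on $j$ shows that each differentiation either strikes the exponential (multiplying by $a(2\rho)^{-1/2}$, hence raising the power of $a$) or lowers an algebraic power of $2\rho$, giving the structural identity
\[ \gamma_a^{(j)}(\rho)=\e^{a\sqrt{2\rho}}\sum_{k=0}^j d_{j,k}\,a^k(2\rho)^{\frac{k-2j-1}2}, \]
where the integers $d_{j,k}$ satisfy $d_{j,k}=d_{j-1,k-1}-(2j-1-k)d_{j-1,k}$ with $d_{0,0}=1$. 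Solving this recursion (once more by induction) yields $|d_{j,k}|=\frac{(2j-k)!}{2^{j-k}(j-k)!\,k!}$; the one feature I really need is that the leading coefficient is $|d_{j,j}|=1$.

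Next I would divide by $\frac{|\gamma_a(\rho)|}{\rho^j}=2^j\e^{\Re(a)\sqrt{2\rho}}(2\rho)^{-\frac{2j+1}2}$. Taking moduli in the structural identity and writing $x=|a|\sqrt{2\rho}$, so that $|a|^k(2\rho)^{k/2}=x^k$, the asserted estimate \autoref{eqDerivativeOfExponential} follows once I show the polynomial inequality
\[ P(x):=\sum_{k=0}^j|d_{j,k}|\,x^k\le 4^j(j+1)!\,\e^{j+1}+2^j(j+1)\,x^j\qquad\text{for all }x\ge0. \]
Here the top term $|d_{j,j}|x^j=x^j$ reproduces precisely the $|a|^j$–part of the right–hand side, while every lower–order term has to be hidden inside the constant $2^j(j+1)!\,\e^{j+1}$.

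To prove this I would split at $x=j$. Reindexing by $m=j-k$, so that $|d_{j,k}|=\frac{(j+m)!}{2^m m!(j-m)!}$, and pairing the factors of $\frac{(j+m)!}{(j-m)!}$ through $(j+l)(j-l+1)=j(j+1)-l(l-1)\le j(j+1)$ gives $\frac{(j+m)!}{(j-m)!}\le(j(j+1))^m$ and hence $\sum_{m=0}^j\frac{(j+m)!}{2^m m!(j-m)!}\,j^{-m}\le\e^{(j+1)/2}$. Since $P$ has non–negative coefficients it is increasing, so for $x\le j$ monotonicity gives $P(x)\le P(j)\le j^j\e^{(j+1)/2}$, which is below $4^j(j+1)!\,\e^{j+1}$ after using $j^j\le\e^j j!$; and for $x\ge j$ the bound $x^{-m}\le j^{-m}$ together with the same sum estimate gives $P(x)\le x^j\e^{(j+1)/2}\le 2^j(j+1)x^j$. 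In either range $P(x)$ lies below one of the two terms on the right, which proves the inequality.

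The hard part is this last inequality. Because the terms with $k<j$ carry positive powers $x^k$ that grow without bound, they cannot be controlled by a constant alone; the estimate survives only because the leading coefficient is exactly $1$ and because the sharp pairing bound $(j+l)(j-l+1)\le j(j+1)$ keeps the resulting constant small enough to fit under the (generously chosen) factors $2^j(j+1)!\,\e^{j+1}$ and $2^j(j+1)$. A cruder estimate such as $\frac{(j+m)!}{(j-m)!}\le(2j)^{2m}$ turns out to be just barely too weak, so getting the constants to close is the real content of the argument.
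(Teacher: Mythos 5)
Your proof is correct (all the key identities check out: the recursion $d_{j,k}=d_{j-1,k-1}-(2j-1-k)d_{j-1,k}$ follows by induction in the variable $t=\sqrt{2\rho}$, the closed form $|d_{j,k}|=\frac{(2j-k)!}{2^{j-k}(j-k)!\,k!}$ satisfies it, and the reduction to the polynomial inequality $P(x)\le 4^j(j+1)!\,\e^{j+1}+2^j(j+1)x^j$ with $x=|a|\sqrt{2\rho}$ is exact), but it takes a genuinely different route from the paper. The paper writes $\gamma_a=\frac1a\frac{\d}{\d\rho}\e^{a\sqrt{2\rho}}$ and applies Fa\`a di Bruno's formula to the composition $\e^{a\sqrt{2\rho}}$, estimates the resulting double-factorial coefficients via $\frac{(2k-3)!!}{k!}\le 2^{k-1}$, collapses the partition sum with a combinatorial identity of Comtet, and finishes with the inequality $\sum_{\ell=0}^j\frac{s^\ell}{(\ell+1)!}\le\e^{j+1}+\frac{s^j}{j!}$, itself proved by splitting at $s\approx j+1$. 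You instead solve the coefficient recursion exactly (your $|d_{j,k}|$ are the Bessel-polynomial coefficients, so nothing is lost at that stage, whereas the paper's double-factorial estimate is lossy), and your case split at $x=j$ with the sharp pairing bound $(j+l)(j-l+1)\le j(j+1)$ plays precisely the role of the paper's split in the partial exponential sum. What your approach buys is self-containedness --- no Fa\`a di Bruno, no citation for the partition identity --- at the price of having to guess and verify a closed form; the paper's approach buys genericity, since it never needs the exact coefficients. One small repair: your final step in the range $x\ge j$ uses $\e^{(j+1)/2}\le 2^j(j+1)$, which fails for $j=0$ (where $\e^{1/2}>1$); since $P\equiv 1$ and the target bound is $1\le\e+1$ in that case, the lemma for $j=0$ is trivial, so you should simply dispose of $j=0$ separately before the case split.
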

\begin{proof}
Let us first assume that $a\ne0$ and write $\gamma_a(\rho) = \frac1a\frac{\d}{\d\rho}\e^{a\sqrt{2\rho}}$. Thus, we have for every $j\in\N_0$ that
\[ \gamma_a^{(j)}(\rho) = \frac1a\frac{\d^{j+1}}{\d\rho^{j+1}}\e^{a\sqrt{2\rho}}. \]
Applying to this the formula of Faà di Bruno, see for example~\cite[Chapter 3.4, Theorem A]{Com74}, we find with $A_{j+1}=\{\alpha\in\N_0^{j+1}\mid\sum_{k=1}^{j+1}k\alpha_k=j+1\}$ that
\begin{align*}
\gamma_a^{(j)}(\rho) &= \sum_{\alpha\in A_{j+1}}\frac{(j+1)!}{\alpha!}(2\rho)^{-\frac{\alpha_1}2}\prod_{k=2}^{j+1}\left((-1)^{k+1}\frac{(2k-3)!!}{k!}(2\rho)^{\frac12-k}\right)^{\alpha_k}a^{|\alpha|-1}\e^{a\sqrt{2\rho}} \\
&= \sum_{\alpha\in A_{j+1}}\frac{(j+1)!}{\alpha!}\prod_{k=2}^{j+1}\left((-1)^{k+1}\frac{(2k-3)!!}{k!}\right)^{\alpha_k}(2\rho)^{\frac12|\alpha|-j-1}a^{|\alpha|-1}\e^{a\sqrt{2\rho}}.
\end{align*}
This formula also extends continuously to the case $a=0$.

Estimating it from above by using that $\frac{(2k-3)!!}{k!}\le\frac{2^{k-1}(k-1)!}{k!}\le 2^{k-1}$, we obtain that
\[ |\gamma_a^{(j)}(\rho)| \le \sum_{\alpha\in A_{j+1}}\frac{(j+1)!}{\alpha!}\left(\frac\rho2\right)^{\frac12|\alpha|}|a|^{|\alpha|-1}\frac{\e^{\Re a\sqrt{2\rho}}}{\rho^{j+1}}. \]

Now, using the combinatorial identity
\[ \sum_{\alpha\in A_{j+1}\cap P_{j+1,\ell+1}}\frac{(j+1)!}{\alpha!}=\binom{j}{\ell}\frac{(j+1)!}{(\ell+1)!} \]
for $P_{j+1,\ell+1}=\{\alpha\in\N_0^{j+1}\mid|\alpha|=\ell+1\}$, see for example~\cite[Chapter 3.3, Theorem B]{Com74}, we find that
\[ |\gamma_a^{(j)}(\rho)|\le\sum_{\ell=0}^j\binom{j}{\ell}\frac{(j+1)!}{(\ell+1)!}\left(\frac\rho2\right)^{\frac{\ell+1}2}|a|^\ell\frac{\e^{\Re a\sqrt{2\rho}}}{\rho^{j+1}}. \]
We may further estimate this by using $\binom{j}{\ell}\le\sum_{k=0}^j\binom{j}{k}=2^j$ and
\begin{equation}\label{eqPartialExponentialSum}
\sum_{\ell=0}^j\frac{s^\ell}{(\ell+1)!}\le\e^{j+1}+\frac{s^j}{j!}\quad\text{for every}\quad s>0
\end{equation}
(since $\sum_{\ell=0}^j\frac{s^\ell}{(\ell+1)!}\le\sum_{\ell=0}^j\frac{s^j}{(j+1)!}=\frac{s^j}{j!}$ if $s\ge j+1$ and $\sum_{\ell=0}^j\frac{s^\ell}{(\ell+1)!}\le\sum_{\ell=0}^j\frac{(j+1)^\ell}{\ell!}\le\e^{j+1}$ otherwise) to obtain~\autoref{eqDerivativeOfExponential}.
\end{proof}

Putting together \autoref{thDerivativeRadialSymmetric} and \autoref{thDerivativeOfExponential}, we find an estimate for the directional derivatives of the function $G_\kappa$.

\begin{proposition}\label{thGreensFunctionDirectionalDerivative}
Let $G_\kappa$ be given by \autoref{eqSolAttWaveKernel} for some arbitrary function $\kappa:\R\to\C$. Then, there exists a constant $C>0$ so that we have for every $j\in\N_0$, $x\in\R^3\setminus\{0\}$, and $v\in S^2$ the inequality
\begin{equation}\label{eqGreensFunctionDirectionalDerivative}
\frac1{j!}\left|\Dj G_\kappa(\omega,x+sv)\right|\le |G_\kappa(\omega,x)|C^j\left(\frac1{|x|^j}+\frac1{j!}|\kappa(\omega)|^j\right).
\end{equation}
\end{proposition}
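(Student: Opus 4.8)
The plan is to reduce the directional derivative to one-dimensional derivatives through the radial representation \autoref{eqGreensFunctionRadialPart}. Writing $\phi(s)=\tfrac12|x+sv|^2$, we have $G_\kappa(\omega,x+sv)=g_{\kappa,\omega}(\phi(s))$, and since $x\ne0$ the function $g_{\kappa,\omega}$ is smooth near $\phi(0)=\tfrac12|x|^2>0$, so \autoref{thDerivativeRadialSymmetric} applied to the composition $g_{\kappa,\omega}\circ\phi$ turns the derivative into the finite sum
\[ \Dj G_\kappa(\omega,x+sv)=\sum_{k=0}^{\lfloor j/2\rfloor}\frac{j!}{2^kk!(j-2k)!}\left<v,x\right>^{j-2k}g_{\kappa,\omega}^{(j-k)}(\tfrac12|x|^2). \]
Since $g_{\kappa,\omega}=-\frac{\i\omega}{4\pi\sqrt{2\pi}}\gamma_{\i\kappa(\omega)}$ with $\gamma_a$ as in \autoref{eqGammaA}, the radial derivatives $g_{\kappa,\omega}^{(j-k)}(\tfrac12|x|^2)$ are precisely the quantities estimated by \autoref{thDerivativeOfExponential}, now with $a=\i\kappa(\omega)$ (so $|a|=|\kappa(\omega)|$) and with $|G_\kappa(\omega,x)|=\frac{|\omega|}{4\pi\sqrt{2\pi}}|\gamma_{\i\kappa(\omega)}(\tfrac12|x|^2)|$, because $\sqrt{2\cdot\tfrac12|x|^2}=|x|$ and $\Re(\i\kappa(\omega))=-\Im\kappa(\omega)$.

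Next I would insert $\rho=\tfrac12|x|^2$ into the bound \autoref{eqDerivativeOfExponential}, estimate $|\left<v,x\right>|^{j-2k}\le|x|^{j-2k}$, and cancel the factor $j!$ against the $\tfrac1{j!}$ on the left-hand side of \autoref{eqGreensFunctionDirectionalDerivative}. A short computation of the powers of $|x|$ (using $(\tfrac\rho2)^{\ell/2}\rho^{-\ell}=|x|^{-\ell}$ and $\rho^{-\ell}=2^\ell|x|^{-2\ell}$ with $\ell=j-k$) shows that the $\e^{\ell+1}$-term of \autoref{eqDerivativeOfExponential} produces contributions proportional to $|x|^{-j}$, whereas its $|a|^\ell$-term produces the cross terms $|\kappa(\omega)|^{j-k}|x|^{-k}$. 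Thus \autoref{eqGreensFunctionDirectionalDerivative} follows once the two resulting combinatorial sums are each dominated by $C^j$.

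The exponential-part sum $\sum_k\frac{4^{j-k}(j-k+1)!\,\e^{j-k+1}}{2^kk!(j-2k)!}$ multiplying $|x|^{-j}$ is routine: bounding $\frac{(j-k+1)!}{(j-2k)!}\le(j+1)^{k+1}$ turns it into a partial series $\sum_k\frac1{k!}\big(\frac{j+1}{8\e}\big)^k\le\exp\big(\frac{j+1}{8\e}\big)$ with exponential prefactors, hence into a bound of the form $C^j|x|^{-j}$.

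The main obstacle is the cross-term sum $\sum_k\frac{2^{j-k}(j-k+1)}{2^kk!(j-2k)!}|\kappa(\omega)|^{j-k}|x|^{-k}$, where one must respect the asymmetric $\tfrac1{j!}$ in the target. A naive split $|\kappa|^{j-k}|x|^{-k}\le|\kappa|^j+|x|^{-j}$ (plain Young's inequality) is fatally lossy: the coefficients $\frac{j!}{k!(j-2k)!}$ grow like the involution numbers $\sim\sqrt{j!}$ and would swamp any $C^j$. Instead I would fold each cross term directly into the target: writing $t=|\kappa(\omega)|\,|x|$,
\[ \frac{|\kappa(\omega)|^{j-k}}{|x|^k}=\frac{t^{j-k}}{|x|^j}\le M_k\left(\frac1{|x|^j}+\frac1{j!}|\kappa(\omega)|^j\right),\qquad M_k=\sup_{t>0}\frac{t^{j-k}}{1+t^j/j!}, \]
and then estimating $M_k$ explicitly. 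The supremum is attained at $t^j=\frac{(j-k)\,j!}{k}$, which gives $M_0=j!$ and, for $1\le k\le\lfloor j/2\rfloor$, the bound $M_k\le k\,j!\,(\e/j)^k$ after using $(j!)^{1/j}\ge j/\e$. Multiplying by the cross-term coefficient and using $\frac{j!}{(j-2k)!}\le j^{2k}$ collapses the $k$-th summand to $2^j(j-k+1)\frac1{(k-1)!}\big(\frac{\e j}4\big)^k$, whose sum over $k$ is again a convergent exponential series bounded by $C^j$. Combining the exponential and cross-term estimates yields \autoref{eqGreensFunctionDirectionalDerivative}; the only genuine subtlety is this cancellation of the apparent $\sqrt{j!}$ growth against the smallness $M_k\sim j!(\e/j)^k$ of the cross terms, everything else being bookkeeping of constants and powers of $|x|$.
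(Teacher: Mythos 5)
Your proof is correct, and its reduction is the one the paper itself uses: the radial form \autoref{eqGreensFunctionRadialPart}, the Faà di Bruno lemma \autoref{thDerivativeRadialSymmetric}, and the estimate \autoref{thDerivativeOfExponential} with $a=\i\kappa(\omega)$, $\rho=\tfrac12|x|^2$, leading to the same split into an exponential part proportional to $|x|^{-j}$ and cross terms $|\kappa(\omega)|^{j-k}|x|^{-k}$ with coefficients $2^{j-2k}(j-k+1)/(k!\,(j-2k)!)$ after dividing by $j!$. Where you genuinely differ is the last, decisive step of absorbing the cross terms into the asymmetric target. The paper bounds $\frac{(j-k+1)!}{k!(j-2k)!}\le(j-k+1)2^{j-k}$, sweeps all such factors into a crude $\tilde C^j$, re-indexes the cross terms into the single partial sum $\sum_{k=\lceil j/2\rceil}^{j}\frac1{k!}\bigl(\tfrac12|x|\,|\kappa(\omega)|\bigr)^k$, and applies the dichotomy bound \autoref{eqPartialExponentialSum} (a partial exponential sum is at most $\e^{j+1}$ plus its top term), so that dividing by $|x|^j$ produces both target terms at once, with no per-term analysis. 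You instead dominate each cross term by $M_k\bigl(|x|^{-j}+|\kappa(\omega)|^j/j!\bigr)$ with $M_k=\sup_{t>0}t^{j-k}/(1+t^j/j!)$, evaluate the supremum at $t^j=(j-k)\,j!/k$, and use $(j!)^{1/j}\ge j/\e$ to obtain $M_k\le k\,j!\,(\e/j)^k$; I verified this bound (indeed even $M_k\le j!\,(\e/j)^k$ holds, since $k^{k/j}\le(j-k)^{k/j}$ for $k\le\lfloor j/2\rfloor$), the resulting collapse of the $k$-th summand to $2^j(j-k+1)\frac1{(k-1)!}\bigl(\tfrac{\e j}4\bigr)^k$, and your exponential-part estimate via $\frac{(j-k+1)!}{(j-2k)!}\le(j+1)^{k+1}$, so your argument closes with a valid $C^j$. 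The two endgames buy different things: the paper's grouping is shorter and never has to confront the weakness of cruder splits, while your per-term optimization is longer but isolates the real subtlety — as you correctly observe, a plain Young split would need $\sum_k\frac{j!\,2^{j-2k}(j-k+1)}{k!\,(j-2k)!}\le C^j$, which fails because the $k\approx j/2$ terms grow like $j^{j/2}$ — and thereby explains why the asymmetric $\frac1{j!}|\kappa(\omega)|^j$ term in \autoref{eqGreensFunctionDirectionalDerivative} is attainable at all.
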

\begin{proof}
Writing $G_\kappa$ in the form \autoref{eqGreensFunctionRadialPart}, \autoref{thDerivativeRadialSymmetric} implies (with $\gamma=g_{\kappa,\omega}$) that
\[ \left|\Dj G_\kappa(\omega,x+sv)\right| \le \sum_{k=0}^{\lfloor\frac j2\rfloor}\frac{j!}{2^kk!(j-2k)!}|x|^{j-2k}\,|g_{\kappa,\omega}^{(j-k)}(\tfrac12|x|^2)|. \]
Inserting the estimate for $g_{\kappa,\omega}^{(j-k)}$ obtained from \autoref{thDerivativeOfExponential} (using that $g_{\kappa,\omega}=-\frac{\i \omega}{4\pi\sqrt{2\pi}}\gamma_{\i\kappa(\omega)}$ with $\gamma_{\i\kappa(\omega)}$ being defined by \autoref{eqGammaA} and evaluating at $\rho=\frac12|x|^2$), we find that
\[ \left|\Dj G_\kappa(\omega,x+sv)\right| \le j!\frac{|G_\kappa(\omega,x)|}{|x|^j}\sum_{k=0}^{\lfloor\frac j2\rfloor}\frac{(j-k+1)!}{k!(j-2k)!}2^{2j-3k}\left(\e^{j-k+1}+\frac1{(j-k)!}\left(\frac{|x|\,|\kappa(\omega)|}2\right)^{j-k}\right). \]
Using further that $\frac{(j-k+1)!}{k!(j-2k)!}=(j-k+1)\binom{j-k}k\le(j-k+1)2^{j-k}$, we find that there exists a constant $\tilde C>0$ so that
\[ \frac1{j!}\left|\Dj G_\kappa(\omega,x+sv)\right| \le \frac{|G_\kappa(\omega,x)|}{|x|^j}\tilde C^j\left(1+\sum_{k=\lceil\frac j2\rceil}^j\frac1{k!}\left(\frac{|x|\,|\kappa(\omega)|}2\right)^k\right). \]

Estimating the sum herein by using relation \autoref{eqPartialExponentialSum}, we obtain the inequality \autoref{eqGreensFunctionDirectionalDerivative}.
\end{proof}

\autoref{thGreensFunctionDirectionalDerivative} allows us to estimate the derivatives of the function $G_\kappa$, however, to apply \autoref{thExponentialDecay} to $\PO^*\PO$ for the integrated photoacoustic operator $\PO$, we need to estimate the derivatives of the kernel~$F_\kappa$ of $\PO^*\PO$, given by \autoref{eqPATOp2Kernel}. For the integral over the frequency which appears in this estimate, we use the following result in the proof of \autoref{thForwardOperatorKernelUpperBound}.

\begin{lemma}\label{thFrequencyIntegral}
Let $\varepsilon>0$ and $\kappa:\R\to\bar\H$ be a measurable function fulfilling the inequality \autoref{eqAttCoeffPolBdd} for $\ell=0$ with some constants $\kappa_1>0$ and $N\in\N$ and the inequality \autoref{eqStrongAttenuation} with some constants $\omega_0>0$, $\kappa_0>0$, and $\beta>0$.

Then, there exist constants $B,b>0$ so that we have for every $j\in\N_0$ the estimate
\[ \frac1{j!}\int_{-\infty}^\infty|\kappa(\omega)|^j\e^{-2\varepsilon\Im\kappa(\omega)}\d\omega \le Bb^jj^{(\frac{N}{\beta}-1)j}.  \]
\end{lemma}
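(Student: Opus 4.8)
The plan is to split the frequency integral at a cutoff $\omega_1\ge\max\{\omega_0,1\}$ into a bounded part $|\omega|\le\omega_1$ and a tail $|\omega|\ge\omega_1$, and to reduce the tail to a Gamma integral via a power substitution. On the bounded region I would use $\Im\kappa\ge0$, hence $\e^{-2\varepsilon\Im\kappa(\omega)}\le1$, together with the polynomial bound \autoref{eqAttCoeffPolBdd}, which on the compact set $[-\omega_1,\omega_1]$ yields $|\kappa(\omega)|\le C_0$ for some constant $C_0$. Thus $\frac1{j!}\int_{|\omega|\le\omega_1}|\kappa(\omega)|^j\e^{-2\varepsilon\Im\kappa(\omega)}\d\omega\le\frac{2\omega_1C_0^j}{j!}$, which is even bounded by the constant $2\omega_1\e^{C_0}$ and hence trivially of the required form. (Here I would note that the two hypotheses force $\beta\le N$, since $\kappa_0|\omega|^\beta\le\Im\kappa(\omega)\le|\kappa(\omega)|\le\kappa_1(1+|\omega|)^N$, so that $N/\beta-1\ge0$ and $j^{(N/\beta-1)j}\ge1$.) All the work is in the tail.

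On the tail I would use both hypotheses: for $|\omega|\ge1$ the bound \autoref{eqAttCoeffPolBdd} gives $|\kappa(\omega)|\le\kappa_12^N|\omega|^N$, while \autoref{eqStrongAttenuation} gives $\Im\kappa(\omega)\ge\kappa_0|\omega|^\beta$. Hence the tail integrand is dominated by $(\kappa_12^N)^j|\omega|^{Nj}\e^{-2\varepsilon\kappa_0|\omega|^\beta}$, and after enlarging the domain of integration to all of $\R$ (the majorant being nonnegative) I am left to control $2(\kappa_12^N)^j\int_0^\infty\omega^{Nj}\e^{-c\omega^\beta}\d\omega$ with $c=2\varepsilon\kappa_0>0$. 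The substitution $t=c\omega^\beta$ turns this into a Gamma function:
\[ \int_0^\infty\omega^{Nj}\e^{-c\omega^\beta}\d\omega = \frac1\beta c^{-\frac{Nj+1}\beta}\Gamma\!\left(\frac{Nj+1}\beta\right). \]

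It then remains to understand the growth of $\frac1{j!}\Gamma(\frac{Nj+1}\beta)$ as $j\to\infty$, and this is the main step. Writing $a=N/\beta\ge1$, I would apply Stirling's formula in the two-sided form $\Gamma(x)\le Cx^{x-\frac12}\e^{-x}$ and $j!\ge c'j^{j+\frac12}\e^{-j}$; expanding $(aj+\tfrac1\beta)^{aj+\frac1\beta-\frac12}=a^{aj}j^{aj}\cdot(\text{subexponential})$ gives
\[ \frac1{j!}\Gamma\!\left(\frac{Nj+1}\beta\right)\le C''\left(a^a\e^{-(a-1)}\right)^jj^{(a-1)j}\,j^{\frac1\beta-1}. \]
The decisive factor here is $j^{(a-1)j}=j^{(N/\beta-1)j}$; the base $a^a\e^{-(a-1)}$ and the geometric factors $c^{-(Nj+1)/\beta}$ and $(\kappa_12^N)^j$ fold into a single base $b$, while the polynomial factor $j^{1/\beta-1}$ is subexponential and is absorbed into $b^j$ (any power $j^p$ satisfies $j^p\le C_\delta(1+\delta)^j$). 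Collecting the bounded and tail contributions and choosing $B,b$ large enough — handling the finitely many small $j$, in particular $j=0$, by enlarging $B$ — yields the claimed estimate.

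The hard part will be the Stirling bookkeeping: one must verify that after dividing $\Gamma((Nj+1)/\beta)$ by $j!$ the surviving superexponential growth is \emph{exactly} $j^{(N/\beta-1)j}$, and that every remaining $j$-dependence is at most geometric or polynomial, so that it can legitimately be merged into the constants $B$ and $b$.
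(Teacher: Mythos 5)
Your proposal is correct and follows essentially the same route as the paper's proof: both reduce the integral to $\Gamma\bigl(\tfrac{Nj+1}{\beta}\bigr)$ via the substitution $\nu=\omega^\beta$ and then apply two-sided Stirling bounds to the ratio $\Gamma\bigl(\tfrac{Nj+1}{\beta}\bigr)/j!$ to extract exactly the factor $j^{(\frac N\beta-1)j}$. The only (cosmetic) differences are that you split the domain at a cutoff $\omega_1$ and use $\Im\kappa\ge0$ on the bounded part, whereas the paper uses the global bound $\Im\kappa(\omega)\ge\kappa_0|\omega|^\beta-C$ and handles $(1+\omega)^{Nj}$ by Jensen's inequality instead of your tail estimate $(1+\omega)^N\le2^N\omega^N$.
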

\begin{proof}
By our assumptions on $\kappa$, we have that there is a constant $C\ge0$ such that $\Im\kappa(\omega)\ge\kappa_0|\omega|^\beta-C$ for all $\omega\in\R$. Thus,
\[ \frac1{j!}\int_{-\infty}^\infty|\kappa(\omega)|^j\e^{-2\varepsilon\Im\kappa(\omega)}\d\omega\le\frac{2\e^{2C\varepsilon}\kappa_1^j}{j!}\int_0^\infty(1+\omega)^{Nj}\e^{-2\varepsilon\kappa_0\omega^\beta}\d\omega. \]

By Jensen's inequality, applied to the convex function $f(r)=r^{Nj}$, we can estimate
\[ (1+\omega)^{Nj} = 2^{Nj}\left(\frac{1}{2}+\frac{1}{2}\omega\right)^{Nj}\le 2^{Nj-1}(1+\omega^{Nj}). \]
Then, we find with the substitution $\nu=\omega^\beta$ that
\begin{align*}
\frac1{j!}\int_{-\infty}^\infty|\kappa(\omega)|^j\e^{-2\varepsilon\Im\kappa(\omega)}\d\omega &\le \frac{(2^N\kappa_1)^j\e^{2C\varepsilon}}{\beta j!}\int_0^\infty\nu^{\frac1\beta-1}(1+\nu^{\frac{Nj}\beta})\e^{-2\varepsilon\kappa_0\nu}\d \nu \\
&= \frac{(2^N\kappa_1)^j\e^{2C\varepsilon}}{\beta\Gamma(j+1)}\left((2\varepsilon\kappa_0)^{-\frac1\beta}\Gamma(\tfrac1\beta)+(2\varepsilon\kappa_0)^{-\frac{Nj+1}\beta}\Gamma(\tfrac{Nj+1}\beta)\right),
\end{align*}
where
\[ \Gamma(\rho) = \int_0^\infty \nu^{\rho-1}\e^{-\nu}\d\nu,\quad \rho>0, \]
denotes the gamma function.

Recalling Stirling's formula, see for example \cite[Section 6.1.42]{AbrSte72}, we know that the gamma function can be bounded from below and above by
\[ \sqrt{\frac{2\pi}\rho}\left(\frac\rho\e\right)^\rho \le \Gamma(\rho) \le \sqrt{\frac{2\pi}\rho}\left(\frac\rho\e\right)^\rho\e^{\frac1{12\rho}}\quad\text{for every}\quad\rho>0. \]
Thus, we find constants $B,b>0$ so that
\[ \frac1{j!}\int_{-\infty}^\infty|\kappa(\omega)|^j\e^{-2\varepsilon\Im\kappa(\omega)}\d\omega \le Bb^jj^{(\frac{N}{\beta}-1)j}\]
for all $j\in\N_0$.
\end{proof}

\section*{Acknowledgement}
The work is support by the ``Doctoral Program Dissipation and Dispersion in Nonlinear PDEs'' (W1245). OS is also supported by the FWF-project ``Interdisciplinary Coupled Physics Imaging'' (FWF P26687).

The authors thank Adrian Nachman (from the University of Toronto) for stimulating discussions on pseudodifferential operators.

\section*{References}
\renewcommand{\i}{\ii}
\printbibliography[heading=none]

\end{document}